\documentclass[11pt, a4paper,leqno]{amsart}
\usepackage{amsmath,amsthm,amscd,amssymb,amsfonts, amsbsy}
\usepackage{latexsym}
\usepackage{txfonts}
\usepackage{exscale}
\usepackage{mathrsfs}

\day=28 \month=06 \year=2017

\parskip=4pt
\textwidth=1.2\textwidth
\textheight=1.08\textheight

\marginparwidth=65pt

\usepackage[colorlinks,citecolor=blue,pagebackref,hypertexnames=false]{hyperref}

\usepackage{pgf}
\usepackage{color}




\parskip=3pt



\calclayout
\allowdisplaybreaks


\theoremstyle{plain}
\newtheorem{theorem}[equation]{Theorem}
\newtheorem{lemma}[equation]{Lemma}
\newtheorem{corollary}[equation]{Corollary}
\newtheorem{proposition}[equation]{Proposition}

\newtheorem{knowntheorem}{Theorem}
\newtheorem{hypothesis}[equation]{Hypothesis}

\theoremstyle{definition}
\newtheorem{definition}[equation]{Definition}

\theoremstyle{remark}
\newtheorem{remark}[equation]{Remark}

\numberwithin{equation}{section}

\newcommand{\eps}{\varepsilon}

\newcommand{\dint}{\int\!\!\!\int}

\newcommand{\dist}{\operatorname{dist}}

\newcommand{\re}{\mathbb{R}}
\newcommand{\rn}{\mathbb{R}^n}

\newcommand{\reu}{\mathbb{R}^{n+1}_+}
\newcommand{\ree}{\mathbb{R}^{n+1}}
\newcommand{\N}{\mathbb{N}}
\newcommand{\dd}{\mathbb{D}}

\newcommand{\C}{\mathcal{C}}

\newcommand{\F}{\mathcal{F}}

\newcommand{\W}{\mathcal{W}}

\newcommand{\I}{\mathcal{I}}

\newcommand{\B}{\mathcal{B}}
\newcommand{\cc}{\mathcal{C}}

\newcommand{\G}{\mathcal{G}}

\newcommand{\mut}{\mathfrak{m}}
\newcommand{\pom}{\partial\Omega}

\newcommand{\hm}{\omega}
\newcommand{\om}{\Omega}

\renewcommand{\emptyset}{\mbox{\textup{\O}}}

\newcommand{\tinyemptyset}{\mbox{\tiny \textup{\O}}}

\DeclareMathOperator{\supp}{supp}

\DeclareMathOperator{\diam}{diam}
\DeclareMathOperator{\interior}{int}

\DeclareMathOperator*{\Lip}{Lip}

\def\div{\mathop{\operatorname{div}}\nolimits}

\def\Xint#1{\mathchoice
   {\XXint\displaystyle\textstyle{#1}}%
   {\XXint\textstyle\scriptstyle{#1}}%
   {\XXint\scriptstyle\scriptscriptstyle{#1}}%
   {\XXint\scriptscriptstyle\scriptscriptstyle{#1}}%
   \!\int}
\def\XXint#1#2#3{{\setbox0=\hbox{$#1{#2#3}{\int}$}
     \vcenter{\hbox{$#2#3$}}\kern-.5\wd0}}
\def\aver#1{\Xint-_{#1}}

\begin{document}
\allowdisplaybreaks

\title{$A_\infty$ implies NTA for a class of variable coefficient elliptic operators}

\author{Steve Hofmann}

\address{Steve Hofmann
\\
Department of Mathematics
\\
University of Missouri
\\
Columbia, MO 65211, USA} \email{hofmanns@missouri.edu}

\author{Jos\'e Mar{\'\i}a Martell}

\address{Jos\'e Mar{\'\i}a Martell
\\
Instituto de Ciencias Matem\'aticas CSIC-UAM-UC3M-UCM
\\
Consejo Superior de Investigaciones Cient{\'\i}ficas
\\
C/ Nicol\'as Cabrera, 13-15
\\
E-28049 Madrid, Spain} \email{chema.martell@icmat.es}

\author{Tatiana Toro}

\address{Tatiana Toro \\ University of Washington \\
Department of Mathematics \\
Seattle, WA 98195-4350, USA}

\email{toro@uw.edu}

\thanks{The first author was supported by NSF grant  DMS-1664047. The second author acknowledges that
the research leading to these results has received funding from the European Research
Council under the European Union's Seventh Framework Programme (FP7/2007-2013)/ ERC
agreement no. 615112 HAPDEGMT. He also acknowledges financial support from the Spanish Ministry of Economy and Competitiveness, through the ``Severo Ochoa Programme for Centres of Excellence in R\&D'' (SEV-2015-0554). 
The last author was partially supported
by the Robert R. \& Elaine F. Phelps Professorship in Mathematics and NSF grant  DMS-1361823.
}

\date{January 4, 2017. \textit{Revised:} \today} 
\subjclass[2010]{31B05, 35J08, 35J25, 42B99, 42B25, 42B37.}

\keywords{Elliptic measure, Poisson kernel, uniform rectifiability,
Carleson measures, $A_\infty$ Muckenhoupt weights.}

\begin{abstract}
We consider a certain class of second order, variable coefficient divergence form elliptic operators,
in a uniform domain $\Omega$ with Ahlfors regular boundary,
and we show
that the $A_\infty$ property of the elliptic measure associated to any such operator and its transpose imply that
the domain is in fact NTA (and hence chord-arc).  The converse was already known, and follows from
work of Kenig and Pipher \cite{KP}.
\end{abstract}

\maketitle\setcounter{tocdepth}{3}

\tableofcontents

\section{Main result}

There has been considerable recent activity seeking necessary and sufficient geometric
criteria for
the absolute continuity of harmonic measure with
respect to surface measure, on the boundary of a domain $\Omega$.   This paper is concerned with a
quantitative version of this problem, in which  both the geometric conditions on $\Omega$ and its boundary,
and the absolute
continuity properties of harmonic measure, are expressed in a quantitative, scale invariant fashion.
For example, under the background hypotheses that $\pom$ is Ahlfors regular
(Definition \ref{def1.ADR} below), and that $\Omega$ satisfies
an interior Corkscrew condition (Definition \ref{def1.cork}), in work of the first two authors
\cite{HM4}\footnote{See also \cite{MT} for an alternative proof of the main result of \cite{HM4},
and \cite{HLMN} for an extension of these results to the setting of the $p$-Laplacian.}, it is shown that
$\pom$ is uniformly rectifiable (a quantitative scale invariant version
of rectifiability; see \cite{DS1,DS2}), provided that the Poisson kernel satisfies
a certain uniform scale invariant $L^q$ estimate (which
 is in turn
equivalent to the property that harmonic measure satisfies
 a quantitative, scale-invariant version
of absolute continuity, namely, the weak-$A_\infty$ condition; see
Definition \ref{defAinfty}).


However, the converse to the result
in \cite{HM4} is false
(see \cite{BiJ} for a counter-example), and it remains an open problem to
find a geometric characterization of quantitative absolute continuity of
harmonic measure\footnote{On the other hand, we mention that
certain other boundary estimates for harmonic functions may be characterized in terms of uniform (i.e. quantitative) rectifiability,
by combining the recent results of \cite{HMM} and \cite{GMT}.}.

On the other hand, under strengthened background hypotheses, necessary and sufficient conditions
for absolute continuity are known.  Building on the fundamental result of Dahlberg \cite{Dah} for Lipschitz
domains, David and Jerison \cite{DJe}, and independently Semmes \cite{Se}, showed that
for a chord-arc domain $\Omega$, harmonic measure satisfies an $A_\infty$ condition (see Definition
\ref{defAinfty}) with respect to surface measure $\sigma$ on $\pom$, i.e., harmonic measure is absolutely
continuous with respect to $\sigma$ in a quantitative, scale invariant way.  The term
``chord-arc" refers to an NTA domain with an Ahlfors regular boundary.  In turn, an NTA domain is one
which satisfies the Harnack Chain condition (Definition \ref{def1.hc}), as well as both interior and exterior
Corkscrew conditions.    A domain which satisfies the Harnack Chain condition and
interior Corkscrew condition is known in the literature as a {\it uniform} domain, or a 1-sided NTA domain.
Thus, an NTA domain is a uniform domain for which, in addition,
the exterior Corkscrew condition holds.

In this work, we take as our background hypotheses that $\Omega$ is a uniform (i.e., 1-sided NTA)
domain, and that $\pom$ is Ahlfors regular.  We call such domains {\it 1-sided chord-arc domains}.
In this context, a necessary and sufficient condition for quantitative absolute continuity
(in the form of the $A_\infty$ property) of harmonic measure
with respect to surface measure is known.  Indeed, one has the following.
\begin{knowntheorem}\label{tA}
Suppose that $\om\subset \ree$ is a uniform (aka 1-sided NTA) domain, whose boundary is Ahlfors regular.
Then the following are equivalent:
\begin{enumerate}\itemsep=0.05cm
\item $\pom$ is uniformly rectifiable.
\item $\om$ is an NTA domain, and hence, a chord-arc domain.
\item $\hm\in A_\infty$.
\end{enumerate}
\end{knowntheorem}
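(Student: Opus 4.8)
Since Theorem~\ref{tA} collects results already available in the literature, the plan is not to prove anything new but to arrange three known implications into the cycle $(2)\Rightarrow(3)\Rightarrow(1)\Rightarrow(2)$, after first checking that a $1$-sided chord-arc domain meets the hypotheses needed by each cited result.

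For $(2)\Rightarrow(3)$ I would invoke the theorem of David and Jerison \cite{DJe} and, independently, Semmes \cite{Se}: harmonic measure in a chord-arc domain belongs to $A_\infty$ with respect to $\sigma$. Their argument produces, at every scale and location, a big piece of a Lipschitz subdomain of $\Omega$, and then transfers Dahlberg's estimate \cite{Dah} on Lipschitz domains by a stopping-time / sawtooth comparison. (The same construction also yields $(2)\Rightarrow(1)$ directly, since ``big pieces of Lipschitz graphs at all scales'' is one of the standard characterizations of uniform rectifiability, but for the cycle this is not needed.)

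For $(3)\Rightarrow(1)$ I would appeal to \cite{HM4} (see also \cite{MT}). The background hypotheses there --- the interior Corkscrew condition and Ahlfors regularity of $\partial\Omega$ --- are exactly the ones we are assuming, and the conclusion is that the weak-$A_\infty$ property of harmonic measure implies that $\partial\Omega$ is uniformly rectifiable. Since $A_\infty$ trivially implies weak-$A_\infty$, hypothesis $(3)$ is more than enough to apply that result.

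The substantive step is $(1)\Rightarrow(2)$, i.e.\ upgrading ``$\Omega$ is $1$-sided NTA with uniformly rectifiable boundary'' to ``$\Omega$ is NTA''. Here I would use the characterization of chord-arc domains due to Azzam, Hofmann, Martell, Nystr\"om and Toro: one runs the bilateral corona decomposition of David and Semmes \cite{DS1,DS2}, which at most scales and locations produces a hyperplane that approximates $\partial\Omega$ well from both sides, and then combines this with the interior Corkscrew and Harnack Chain hypotheses to show that $\Omega$ lies essentially on one side of that hyperplane, leaving room on the other side for an exterior Corkscrew point. Once $\Omega$ has exterior Corkscrews it is NTA, and, its boundary being Ahlfors regular, it is chord-arc. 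I expect this implication to be the main obstacle: it is the only one of the three that genuinely converts a metric / measure-theoretic flatness statement into a two-sided topological condition, and it is precisely the prototype of the argument that the rest of this paper must reproduce with the elliptic measure of a Kenig--Pipher operator in place of harmonic measure.
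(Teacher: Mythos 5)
Your proposal is correct and takes essentially the same route as the paper, which simply assembles Theorem~\ref{tA} from the three cited implications: $(2)\Rightarrow(3)$ from \cite{DJe} and \cite{Se}, $(3)\Rightarrow(1)$ from the Hofmann--Martell(--Uriarte-Tuero) line of work, and $(1)\Rightarrow(2)$ from \cite{AHMNT}. The one small discrepancy is that for $(3)\Rightarrow(1)$ the paper attributes the implication to \cite{HMU} (which is proved exactly in the 1-sided NTA $+$ Ahlfors-regular $+$ $A_\infty$ setting of the theorem), whereas you cite the later, stronger result \cite{HM4} (interior Corkscrew $+$ AR $+$ weak-$A_\infty$ already suffices); both references are valid under the hypotheses at hand, so this is a cosmetic rather than a substantive difference.
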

As mentioned above, the implication $(2) \implies (3)$ was proved independently in \cite{DJe} and in \cite{Se},
while $(3) \implies (1)$ appears in joint work of the first two
authors of the present paper, together with I. Uriarte-Tuero \cite{HMU}, and $(1) \implies (2)$ was proved
by the present authors jointly with J. Azzam and K. Nystr\"om \cite{AHMNT}.

In the present work, we give a direct (and more efficient) proof of the fact that $(3) \implies (2)$, without passing through
the results of \cite{AHMNT}.  More importantly, our approach here allows us to extend these results well beyond
the case of the Laplacian, and to treat a much broader class of divergence form elliptic operators, with variable
coefficients.    For the class of operators that we consider here, the converse direction $(2) \implies (3)$ was known,
and follows readily from work of Kenig and Pipher \cite{KP};
we shall return to this point momentarily (in particular, see  Corollary \ref{cor1}).

More precisely, we shall consider divergence form elliptic operators
$Lu=-\div(A\nabla u)$, whose coefficients satisfy the following assumptions.

\begin{hypothesis}\label{hyp1}  Let
$A(X)=(a_{i,j}(X))_{1\le i,j\le n+1}$
be a \textbf{real} $(n+1)\times(n+1)$ matrix such that $a_{i,j}\in L^\infty(\Omega)$ for
$1\le i,j\le n+1$, and $A$ is uniformly elliptic, that is, there exists $1\le \Lambda<\infty$ such that 
$$
\Lambda^{-1}\,|\xi|^2\le A(X)\,\xi\cdot\xi,
\qquad
|A(X)\,\xi\cdot\eta|\le 
\Lambda\,|\xi|\,|\eta|,
\qquad
\mbox{for all }\xi,\eta\in\ree,\mbox{ and a.e. }X\in\Omega.
$$
Suppose further that $A$ satisfies the following conditions:
\begin{list}{$(\theenumi)$}{\usecounter{enumi}\leftmargin=.8cm
\labelwidth=.8cm\itemsep=0.2cm\topsep=.1cm
\renewcommand{\theenumi}{\alph{enumi}}}

\item $A\in \Lip_{\rm loc}(\Omega)$.


\item $\big\||\nabla A|\,\delta\big\|_{L^\infty(\Omega)}<\infty$, where
$\delta(X)=\dist(X,\pom)$.

\item $\nabla A$ satisfies the Carleson measure estimate:
\begin{equation}\label{car-A-new}
\|\nabla A\|_{\C(\Omega)}:=\sup_{\substack{x\in\pom\\0<r<\diam(\pom)}} \frac1{\sigma(B(x,r)\cap\pom)}\iint_{B(x,r)\cap\Omega} |\nabla A(X)|dX<\infty,
\end{equation}
\end{list}
where $\sigma:=H^n|_{\pom}$ denotes the surface measure and $H^n$ is the $n$-dimensional Hausdorff measure.
\end{hypothesis}

We shall also assume that $\{\omega_L^X\}_{X\in\Omega}$, the elliptic measure associated with $L$,
belongs to $A_\infty(\pom)$.  More precisely, we shall consider elliptic operators whose associated elliptic measure
satisfies the following scale invariant higher integrability estimate.
\begin{hypothesis}\label{hyp2}
There exists $q>1$ and $C\ge 1$, such that for every
surface ball $\Delta=\Delta(x,r):=B(x,r)\cap\pom$, with $x\in\pom$ and $0<r<\diam(\pom)$,
one has that $\omega_L^{X_{\Delta}}\ll \sigma$,
and the Poisson kernel $k_L^{X_{\Delta}}:=d\omega_L^{X_{\Delta}}/d\sigma$
satisfies the scale invariant estimate
\begin{equation}
\int_{\Delta} k_L^{X_{\Delta}}(y)^q\,d\sigma(y)\le C\,\sigma(\Delta)^{1-q}\,.
\label{eq:higher-inte}
\end{equation}
Here, 
$X_{\Delta}$ is a fixed (or any)
Corkscrew point in $\Omega$, relative to $\Delta$ (see Definition \ref{def1.cork}).
\end{hypothesis}
We remark that in the setting of a uniform domain with Ahlfors regular boundary, it can be shown that estimate \eqref{eq:higher-inte} is in turn
equivalent to the property that the Poisson kernel satisfies an $L^q$ reverse H\"older inequality,
equivalently, that elliptic measure satisfies an
$A_\infty$ condition (a quantitative, scale-invariant version
of absolute continuity; see Definition \ref{defAinfty}).  We further remark that Hypothesis \ref{hyp2} is equivalent to the
solvability of the Dirichlet problem for $L$ with $L^p$ data, and with nontangential maximal function
estimates in $L^p$, for $p=q/(q-1)$ (see, \cite{Ke} and \cite{HMT}).

Our main result is the following. 
\begin{theorem}\label{theor:main}
Let $\Omega\subset\ree$, $n\ge 2$, be a 1-sided CAD (see Definition \ref{defi:CAD}). Let $Lu=-\div (A\,\nabla u)$ be a
second order divergence form operator in $\Omega$, whose coefficients
satisfy the assumptions of Hypothesis \ref{hyp1}, and suppose that the elliptic measures for both $L$ and its transpose
$L^\top$ satisfy Hypothesis \ref{hyp2}.
Then $\Omega$ is a chord-arc domain (CAD).
\end{theorem}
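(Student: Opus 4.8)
The plan is to reduce the statement to the one geometric property that is not part of the hypothesis. Since $\Omega$ is assumed to be a 1-sided CAD, it already satisfies the interior Corkscrew and Harnack Chain conditions and has an Ahlfors regular boundary; hence, in order to conclude that $\Omega$ is a CAD, it suffices to establish the \emph{exterior} Corkscrew condition for $\Omega$. I would do this in two stages: first prove that $\partial\Omega$ is uniformly rectifiable, and then use uniform rectifiability, together with the elliptic-measure hypotheses, to produce exterior Corkscrew points at every boundary point and every scale, with constants depending only on the structural parameters.

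\emph{Stage 1: $\partial\Omega$ is uniformly rectifiable.} I would derive this from the fact that $\omega_L\in A_\infty(\partial\Omega)$ (and, in the variable-coefficient setting, also $\omega_{L^\top}\in A_\infty(\partial\Omega)$), following the scheme by which \cite{HMU} treats the harmonic case and relying on the elliptic machinery available for operators satisfying Hypothesis \ref{hyp1}. The role of conditions (a)--(c) of Hypothesis \ref{hyp1} is to guarantee that, in a 1-sided CAD, the quantitative elliptic theory behaves as in the constant-coefficient case: one has the CFMS-type comparison $G_L(X_{\Delta},Y)\approx \delta(Y)^{1-n}\,\omega_L^{X_{\Delta}}\big(\Delta(y^*,\delta(Y))\big)$ for $Y$ near $\partial\Omega$ with $y^*$ a touching point, the boundary Harnack principle, the doubling property of $\omega_L$, and the Carleson measure estimate for bounded $L$-solutions (cf. \cite{HMM}, \cite{HMT}). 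The pointwise bound $\big\||\nabla A|\,\delta\big\|_{L^\infty(\Omega)}<\infty$ and the Carleson measure estimate \eqref{car-A-new} are precisely what makes the integration-by-parts and square-function arguments behind these facts run as for the Laplacian. Feeding them into the argument of \cite{HMU} --- where convenient, after passing to chord-arc sawtooth subdomains of $\Omega$, on which the coefficients still satisfy Hypothesis \ref{hyp1} with controlled constants --- yields that $\partial\Omega$ is uniformly rectifiable.

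\emph{Stage 2: exterior Corkscrew.} Fix $x_0\in\partial\Omega$ and $0<r<\diam(\partial\Omega)$, and suppose toward a contradiction that $B(x_0,r)\setminus\overline{\Omega}$ contains no ball of radius $cr$, for a small constant $c$ to be determined. Since $\partial\Omega$ is uniformly rectifiable and $\Omega$ is a 1-sided CAD, I would invoke the corona decomposition of $\partial\Omega$ by Lipschitz graphs: within the window $B(x_0,r)$ there is a Lipschitz graph $\Gamma$ with small Lipschitz constant (in suitable coordinates) and a subset $E\subset\Gamma\cap\partial\Omega\cap B(x_0,r)$ with $\sigma(E)\gtrsim r^n$, lying in a coherent stopping-time regime at scale $\sim r$. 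At a point of high $\sigma$-density of $E$ and at scale $\sim r$, $\Gamma$ separates a ball $B$ into an upper component $B^+$ and a lower component $B^-$; coherence together with the interior Corkscrew condition forces $B^+\subset\Omega$, while the assumed failure of the exterior Corkscrew condition forces $\Omega$ to occupy a large fraction of $B^-$ as well. I would then show this is impossible: if $\Omega$ reaches deep into $B^-$ on a set of substantial measure, then $\partial\Omega\cap B$ is trapped in a thin neighborhood of $\Gamma$ whose $\sigma$-measure is $o(r^n)$, contradicting $\sigma(E)\gtrsim r^n$ and the Ahlfors lower regularity of $\partial\Omega$. The elliptic-measure hypotheses enter in making ``reaches deep into $B^{\pm}$'' quantitative: via the CFMS identity above and its transpose analogue (recall $G_{L^\top}(X,Y)=G_L(Y,X)$), the $A_\infty$ property of $\omega_L$ and of $\omega_{L^\top}$ gives matched two-sided bounds for the Green function that convert the non-concentration of elliptic measure on $E$ into lower bounds for the depth of $\Omega$ on both sides of $\Gamma$. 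This fixes a uniform $c$, which is the exterior Corkscrew condition, and completes the proof.

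The step I expect to be the main obstacle is Stage 2: the whole argument must be carried out quantitatively and uniformly across scales --- one cannot simply blow up at a single bad point --- so the corona/stopping-time bookkeeping (the choice of the coherent regime, of the high-density point, and of the radius of $B$, all with constants depending only on the allowable parameters) has to be organized carefully, and the implication ``thin complement $\Rightarrow$ thin boundary $\Rightarrow$ failure of Ahlfors regularity'' must be made effective. A secondary but essential technical point is verifying that Hypothesis \ref{hyp1} is inherited, with uniformly controlled constants, by the chord-arc sawtooth subdomains used in Stage 1 (and, if needed, in Stage 2), so that the Kenig--Pipher $A_\infty$ theory \cite{KP} and the CFMS estimates are available on them; this is routine, but requires care because it is the Carleson norm of $\nabla A$ that must be tracked.
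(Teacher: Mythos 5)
Your proposal takes a fundamentally different route from the paper. You propose to first prove that $\partial\Omega$ is uniformly rectifiable (essentially the implication $(3)\implies(1)$ of Theorem A, extended to variable coefficients in the spirit of \cite{HMU}), and then to upgrade UR $+$ 1-sided CAD to CAD (the implication $(1)\implies(2)$). The paper explicitly \emph{avoids} this two-step detour: its whole point is a direct proof of $(3)\implies(2)$ that does not pass through uniform rectifiability or through \cite{AHMNT}. Concretely, the paper defines a ``bad'' family $\B(c_0)$ of cubes on which a $c_0$-exterior Corkscrew fails, and proves (Proposition \ref{prop:mu-Carleson}) that this family satisfies a Carleson packing condition; a one-paragraph pigeonhole argument then yields exterior Corkscrews at all scales. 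The packing condition is established by a stopping-time construction (Lemma \ref{lemma:stop-time}), which isolates a sawtooth on which $\omega\approx\sigma$, together with an integration-by-parts argument against the Green function on each bad cube; the resulting ``solid'' term is controlled by a Carleson estimate for $|\nabla(A^\top\nabla\G)|^2\,\G_\top$ over the sawtooth (Propositions \ref{prop:CME-G} and \ref{prop:CME-G:ns}), again proved by integration by parts with a smooth cut-off adapted to the sawtooth. None of this requires you to first obtain UR.

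The more serious problem is that your Stage 2, as written, has a genuine gap. The conclusion ``$\partial\Omega\cap B$ is trapped in a thin neighborhood of $\Gamma$ whose $\sigma$-measure is $o(r^n)$'' is false: a thin tubular neighborhood of an $n$-dimensional Lipschitz graph $\Gamma$ in a ball $B$ of radius $r$ can perfectly well contain a subset of $\partial\Omega$ of $\sigma$-measure comparable to $r^n$ --- indeed, it will, whenever a large piece of $\partial\Omega$ lies close to $\Gamma$, which is exactly the regime your stopping-time coherence has put you in. (The set $E\subset\Gamma\cap\partial\Omega$ that you constructed with $\sigma(E)\gtrsim r^n$ is itself such a set, so there is no contradiction with Ahlfors regularity.) You seem to be conflating the smallness of the \emph{Lebesgue} measure of a thin tube with smallness of the \emph{surface} measure it captures. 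Relatedly, the claim that ``coherence together with the interior Corkscrew condition forces $B^+\subset\Omega$'' is too strong: the Corkscrew condition only gives a single point deep in $\Omega$, not containment of an entire half-ball, and in general $B^+$ will meet $\partial\Omega$. The actual proof of $(1)\implies(2)$ for 1-sided CADs in \cite{AHMNT} is substantially more delicate (it exploits bilateral flatness from UR together with the Harnack chain/connectivity, not a measure-theoretic thinness argument), and it does not use elliptic-measure hypotheses at all --- so the way your proposal feeds $A_\infty$ into Stage 2 is a red flag that the argument has gone off course.

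If you replaced Stage 2 by a citation of \cite{AHMNT} (whose hypotheses you would indeed satisfy once Stage 1 is complete), and carried out Stage 1 in full (a nontrivial extension of \cite{HMU} to operators satisfying Hypothesis \ref{hyp1}, which would itself require essentially all the elliptic machinery the paper develops, plus more), you would arrive at the theorem by the ``old'' route $(3)\implies(1)\implies(2)$. The paper's contribution is precisely to short-circuit this: the sawtooth integration-by-parts argument gives the exterior Corkscrew directly, with a single self-contained Carleson estimate at its core, and simultaneously handles the non-symmetric case via a second stopping time for $\omega_{L^\top}$.
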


\begin{remark}
In particular, the conclusion that $\om$ is a CAD implies that
$\pom$ is uniformly rectifiable (UR) (see \cite{DS2}). The
UR constants depends on dimension, the ellipticity constants,  the 1-sided CAD constants, $\big\||\nabla A|\,\delta\big\|_\infty$, $\|\nabla A\|_{\C(\Omega)}$ and finally $q$ and $C$ in \eqref{eq:higher-inte}.
\end{remark}

\begin{remark} We point out  that we do not assume symmetry of the coefficient matrix $A$; on the other hand,
Theorem \ref{theor:main} is new even in the symmetric case (and in that case, of course, one requires
that Hypothesis \ref{hyp2} holds only for the elliptic measure associated to $L$).
\end{remark}

Let us observe that given property $(b)$ in Hypothesis \ref{hyp1}, we have that
property $(c)$ immediately implies the more familiar Carleson measure
condition
\begin{equation}\label{car-A-square}
\sup_{\substack{x\in\pom\\0<r<\diam(\pom)}}\frac1{\sigma(B(x,r)\cap\pom)}\iint_{B(x,r)\cap\Omega} |\nabla A(X)|^2 \, \delta(X)\,dX<\infty,
\end{equation}
(on the other hand, the converse is not true).  Here $\delta(X):=\dist(X,\pom)$.
It has essentially been shown by Kenig and Pipher \cite{KP},
that the latter condition, in conjunction with property $(b)$ of
Hypothesis \ref{hyp1},
is sufficient to deduce the $A_\infty$ property of
elliptic measure in an arbitrary Lipschitz domain, and thus, by a well-known maximum principle argument, in
a chord-arc domain $\Omega$
as well, using the method of David and Jerison \cite{DJe}.   We refer the reader to \cite{DJe} for the details, which are stated there
in the case that $L$ is the Laplacian, but extend immediately to the case of operators satisfying property $(b)$ and
\eqref{car-A-square}, since these conditions clearly hold uniformly in every Lipschitz subdomain of $\Omega$.  In particular,
combining our Theorem \ref{theor:main} with the result of \cite{KP} and the method of
\cite{DJe}, we obtain the following necessary and
sufficient criterion.

\begin{corollary}\label{cor1}  Let $\Omega$ be a 1-sided CAD (cf. Definition \ref{defi:CAD}), and
let $Lu=-\div (A\,\nabla u)$ be a variable coefficient second order divergence form operator whose coefficients satisfy
Hypothesis \ref{hyp1}.  Then the elliptic measures $\{\omega_L^X\}_{X\in\Omega}$ and $\{\omega_{L^\top}^X\}_{X\in\Omega}$
belong to $A_\infty(\pom)$, if and only if $\Omega_{ext}:= \ree\setminus \overline{\Omega}$ satisfies the
Corkscrew condition (and thus, $\Omega$ is a CAD).
\end{corollary}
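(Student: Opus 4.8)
The plan is to obtain Corollary~\ref{cor1} by combining Theorem~\ref{theor:main} with the theory of Kenig--Pipher \cite{KP} and the localization method of David--Jerison \cite{DJe}; essentially all of the genuine work will already have been done in Theorem~\ref{theor:main}. First I would record the reduction that is implicit in the hypotheses: since $\Omega$ is a $1$-sided CAD, the remark following Hypothesis~\ref{hyp2} tells us that, for any operator $L$ of the type under consideration, $\{\omega_L^X\}_{X\in\Omega}\in A_\infty(\pom)$ is \emph{equivalent} to the scale-invariant reverse-H\"older estimate \eqref{eq:higher-inte}, i.e.\ to Hypothesis~\ref{hyp2} for $L$. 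Hence the left-hand condition in the corollary is exactly ``Hypothesis~\ref{hyp2} holds for both $L$ and $L^\top$'', and it remains to prove the two implications.

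For the forward implication, suppose $\{\omega_L^X\}_{X\in\Omega}$ and $\{\omega_{L^\top}^X\}_{X\in\Omega}$ belong to $A_\infty(\pom)$. By the reduction above, Hypothesis~\ref{hyp2} holds for $L$ and for $L^\top$; since the coefficients of $L$ satisfy Hypothesis~\ref{hyp1} by assumption, Theorem~\ref{theor:main} applies and $\Omega$ is a CAD. A CAD is in particular NTA, so it satisfies the exterior Corkscrew condition; that is, $\Omega_{ext}=\ree\setminus\overline\Omega$ satisfies the Corkscrew condition. No further argument is needed here.

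For the converse, suppose $\Omega_{ext}$ satisfies the Corkscrew condition. Then $\Omega$ is a $1$-sided CAD which in addition enjoys the exterior Corkscrew condition, hence is NTA with Ahlfors regular boundary, i.e.\ a CAD. I would then invoke \cite{KP}: in view of property~(b) of Hypothesis~\ref{hyp1} together with the Carleson measure estimate \eqref{car-A-square} --- which, as noted in the discussion preceding the corollary, follows from property~(c) in the presence of~(b) --- the elliptic measure of $L$ lies in $A_\infty$ on every Lipschitz domain, with constants controlled by the data; since properties~(b) and~\eqref{car-A-square} persist, uniformly, on every Lipschitz subdomain of $\Omega$ (note that $\dist(\,\cdot\,,\partial\Omega')\le\dist(\,\cdot\,,\partial\Omega)$ on $\Omega'\subset\Omega$), the maximum-principle/localization argument of David--Jerison \cite{DJe} promotes this to $\omega_L\in A_\infty(\pom)$ on the CAD $\Omega$ itself. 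Finally, the transpose operator $L^\top=-\div(A^\top\nabla\,\cdot\,)$ has coefficient matrix $A^\top$, which satisfies Hypothesis~\ref{hyp1} with the \emph{same} ellipticity constant and with $|\nabla A^\top|=|\nabla A|$ pointwise, so properties~(b) and~(c) are unchanged; applying the same chain of arguments to $L^\top$ yields $\omega_{L^\top}\in A_\infty(\pom)$, completing the converse.

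Since Theorem~\ref{theor:main} is taken as known, I do not expect any substantial obstacle here: the only points demanding (minor) care are the verifications that Hypotheses~\ref{hyp1}--\ref{hyp2} are stable under passing to the transpose and under restriction to Lipschitz subdomains, and the precise form of the David--Jerison localization that deduces $A_\infty$ on $\pom$ from uniform $A_\infty$ on all Lipschitz subdomains of a CAD --- this last step run verbatim as in \cite{DJe}, since that argument is insensitive to replacing the Laplacian by an operator satisfying~(b) and \eqref{car-A-square}. The real difficulties are all contained in the proof of Theorem~\ref{theor:main} rather than in this corollary.
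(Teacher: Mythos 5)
Your proposal is correct and follows essentially the same route as the paper: the forward implication is Theorem \ref{theor:main} (after identifying $A_\infty(\pom)$ with Hypothesis \ref{hyp2} via the remark following it), and the converse is the Kenig--Pipher $A_\infty$ result on Lipschitz domains promoted to the chord-arc domain $\Omega$ by the David--Jerison localization, together with the observation that Hypothesis \ref{hyp1} passes to $L^\top$ with the same constants. The paper does not spell out a separate formal proof of the corollary; it asserts precisely this combination of Theorem \ref{theor:main}, \cite{KP}, and \cite{DJe} in the paragraph preceding the statement.
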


As noted above,  Corollary \ref{cor1} was already known in the special case that $L$ is the Laplacian.

The plan of the paper is as follows. In Section \ref{section:prelim} we present some background and preliminaries that are used throughout the paper. Section \ref{section:aux} contains some auxiliary results. The proof of Theorem \ref{theor:main} is in Section \ref{section:proof-main}. There, after some reductions, we show that it suffices to establish Proposition \ref{prop:CME-G} in the symmetric case and Proposition \ref{prop:CME-G:ns} in general. These in turn follow from an integration by parts argument. In Section \ref{section-sym} we single out the case of symmetric operators and we pay particular attention to the Laplacian since it is rather simple and models the general case which is treated in Section \ref{section-non-sym}.
Finally, we observe that the main theorem of \cite{KKiPT} allows for a modest shortcut in the proof of
the result of \cite{KP} that we have invoked in Corollary \ref{cor1}.
For the reader's convenience, in an appendix
we briefly sketch the argument of \cite{KP}, using the result of \cite{KKiPT} to shorten their proof.

\medskip

\noindent {\it Acknowledgments}.
The main result of this paper was proved in the Fall of 2014 while the first two authors were visiting the last author at
the Mathematics Department of the University of Washington.
In addition,
these results were presented, in condensed form, by the first two authors at the MSRI Summer Graduate School
``Harmonic Analysis and Elliptic Equations
on real
Euclidean Spaces
and on Rough Sets", June 13-24, 2016.  We are grateful to both of these institutions for their kind hospitality.

We mention also that some of the main ideas of this work
were used subsequently (but have appeared already) in \cite{ABHM} to obtain a qualitative version of the main result
of the present paper.
In particular, Proposition \ref{prop:CME-G}, although first proved in the present work,
was previously announced in \cite[Proposition 3.3]{ABHM}, with a
reasonably detailed sketch of the proof\footnote{Quite recently, Proposition \ref{prop:CME-G}, as gleaned from
\cite{ABHM}, has also been used as one ingredient in work of  Azzam, Garnett, Mourgoglou, and Tolsa
\cite{AGMT}, to extend the results of \cite{GMT} to the class of operators with coefficients satisfying
Hypothesis \ref{hyp1}.}.


\section{Preliminaries}\label{section:prelim}

\subsection{Notation and conventions}

\begin{list}{$\bullet$}{\leftmargin=0.4cm  \itemsep=0.2cm}

\item We use the letters $c,C$ to denote harmless positive constants, not necessarily the same at each occurrence, which depend only on dimension and the
constants appearing in the hypotheses of the theorems (which we refer to as the ``allowable parameters'').  We shall also sometimes write $a\lesssim b$ and $a \approx b$ to mean, respectively, that $a \leq C b$ and $0< c \leq a/b\leq C$, where the constants $c$ and $C$ are as above, unless
explicitly noted to the contrary.   Unless otherwise specified upper case constants are greater than $1$  and lower case constants are smaller than $1$. In some occasions it is important to keep track of the dependence on a given parameter $\gamma$, in that case we write $a\lesssim_\gamma b$ or $a\approx_\gamma b$ to emphasize  that the implicit constants in the inequalities depend on $\gamma$.

\item  Our ambient space is $\ree$, $n\ge 2$. 

\item Given $E\subset\ree$ we write $\diam(E)=\sup_{x,y\in E}|x-y|$ to denote its diameter.

\item Given a domain $\Omega \subset \ree$, we shall
use lower case letters $x,y,z$, etc., to denote points on $\partial \Omega$, and capital letters
$X,Y,Z$, etc., to denote generic points in $\ree$ (especially those in $\ree\setminus \partial\Omega$).

\item The open $(n+1)$-dimensional Euclidean ball of radius $r$ will be denoted
$B(x,r)$ when the center $x$ lies on $\partial \Omega$, or $B(X,r)$ when the center
$X \in \ree\setminus \partial\Omega$.  A {\it surface ball} is denoted
$\Delta(x,r):= B(x,r) \cap\partial\Omega$, and unless otherwise specified it is implicitly assumed that $x\in\pom$.

\item If $\pom$ is bounded, it is always understood (unless otherwise specified) that all surface balls have radii controlled by the diameter of $\pom$, that is, if $\Delta=\Delta(x,r)$ then $r\lesssim \diam(\pom)$. Note that in this way $\Delta=\pom$ if $\diam(\pom)<r\lesssim \diam(\pom)$.



\item For $X \in \ree$, we set $\delta(X):= \dist(X,\partial\Omega)$.

\item We let $H^n$ denote the $n$-dimensional Hausdorff measure, and let
$\sigma := H^n|_{\partial\Omega}$ denote the surface measure on $\partial \Omega$.

\item For a Borel set $A\subset \ree$, we let $1_A$ denote the usual
indicator function of $A$, i.e. $1_A(X) = 1$ if $X\in A$, and $1_A(X)= 0$ if $X\notin A$.



\item For a Borel subset $A\subset\partial\Omega$, with $0<\sigma(A)<\infty$, we
set $\aver{A} f d\sigma := \sigma(A)^{-1} \int_A f d\sigma$.

\item We shall use the letter $I$ (and sometimes $J$)
to denote a closed $(n+1)$-dimensional Euclidean cube with sides
parallel to the coordinate axes, and we let $\ell(I)$ denote the side length of $I$.
We use $Q$ to denote  dyadic ``cubes''
on $\partial \Omega$.  The
latter exist, given that $\partial \Omega$ is AR  (cf. \cite{DS1}, \cite{Ch}), and enjoy certain properties
which we enumerate in Lemma \ref{lemma:Christ} below.

\item Given a domain $\om$, and an elliptic operator $L$, we let $\omega_L^X$ denote the $L$-elliptic measure for $\om$
with pole at $X$, and if $\hm_L^X\ll\sigma$, we let $k_L^X:=d\hm_L^X/d\sigma$ be the corresponding Poisson kernel.
When the operator $L$ is understood, we will at times suppress its appearance in the notation, and write simply
$\hm^X$, $k^X$ in place of $\hm_L^X$ and $k_L^X$.
\end{list}

\subsection{Some definitions}\label{ssdefs} 

\begin{definition}[\bf Ahlfors  regular]\label{def1.ADR}
 We say that a closed set $E \subset \ree$ is {\it $n$-dimensional Ahlfors regular} (AR for shortness) if
there is some uniform constant $C$ such that
\begin{equation} \label{eq1.ADR}
C^{-1}\, r^n \leq H^n(E\cap B(x,r)) \leq C\, r^n,\qquad x\in E, \quad 0<r<\diam(E).
\end{equation}
\end{definition}

\begin{definition}[\bf Corkscrew condition]\label{def1.cork}
Following \cite{JK}, we say that a domain $\Omega\subset \ree$
satisfies the {\it Corkscrew condition} if for some uniform constant $c>0$ and
for every surface ball $\Delta:=\Delta(x,r),$ with $x\in \partial\Omega$ and
$0<r<\diam(\partial\Omega)$, there is a ball
$B(X_\Delta,cr)\subset B(x,r)\cap\Omega$.  The point $X_\Delta\subset \Omega$ is called
a {\it corkscrew point relative to} $\Delta$ (or, relative to $B$). We note that  we may allow
$r<C\diam(\pom)$ for any fixed $C$, simply by adjusting the constant $c$.
\end{definition}

\begin{definition}[\bf Harnack Chain condition]\label{def1.hc}
Again following \cite{JK}, we say
that $\Omega$ satisfies the {\it Harnack Chain condition} if there is a uniform constant $C$ such that
for every $\rho >0,\, \Lambda\geq 1$, and every pair of points
$X,X' \in \Omega$ with $\delta(X),\,\delta(X') \geq\rho$ and $|X-X'|<\Lambda\,\rho$, there is a chain of
open balls
$B_1,\dots,B_N \subset \Omega$, $N\leq C(\Lambda)$,
with $X\in B_1,\, X'\in B_N,$ $B_k\cap B_{k+1}\neq \emptyset$
and $C^{-1}\diam (B_k) \leq \dist (B_k,\partial\Omega)\leq C\diam (B_k).$  The chain of balls is called
a {\it Harnack Chain}.
\end{definition}

\begin{definition}[\bf 1-sided NTA and NTA]\label{def1.1nta}
We say that a domain $\Omega$ is a {\it 1-sided NTA  domain} if it satisfies both the Corkscrew and Harnack Chain conditions.
Furthermore, we say that $\Omega$ is an {\it NTA  domain} if it is a 1-sided NTA domain and if, in addition, $\Omega_{\rm ext}:= \ree\setminus \overline{\Omega}$ also satisfies the Corkscrew condition.
\end{definition}
\begin{remark} The abbreviation NTA stands for non-tangentially accessible. In the literature, 1-sided NTA domains are also called \textit{uniform domains}. We remark that the 1-sided NTA condition is a quantitative form of path connectedness.
\end{remark}

\begin{definition}[\bf 1-sided CAD and CAD]\label{defi:CAD}
A \emph{1-sided chord-arc domain} (1-sided CAD) is a 1-sided NTA domain with AR boundary.
A \emph{chord-arc domain} (CAD) is an NTA domain with AR boundary.
\end{definition}

\begin{definition}\label{defAinfty}
({\bf $A_\infty$},  weak-$A_\infty$, and $RH_q$). 
Given an $n$-dimensional Ahlfors regular set $E\subset\ree$, 
and a surface ball
$\Delta_0:= B_0 \cap E$,
we say that a Borel measure $\mu$ defined on $E$ belongs to
$A_\infty(\Delta_0)$ if there are positive constants $C$ and $s$
such that for each surface ball $\Delta = B\cap E$, with $B\subseteq B_0$,
we have
\begin{equation}\label{eq1.ainfty}
\mu (A) \leq C \left(\frac{\sigma(A)}{\sigma(\Delta)}\right)^s\,\mu (\Delta)\,,
\qquad \mbox{for every Borel set } A\subset \Delta\,.
\end{equation}
Similarly, we say that $\mu \in$ weak-$A_\infty(\Delta_0)$ if
for each surface ball $\Delta = B\cap E$, with $2B\subseteq B_0$,
\begin{equation}\label{eq1.wainfty}
\mu (A) \leq C \left(\frac{\sigma(A)}{\sigma(\Delta)}\right)^s\,\mu (2\Delta)\,,
\qquad \mbox{for every Borel set } A\subset \Delta\,.
\end{equation}
We recall that, as is well known, the condition $\mu \in$ $A_\infty(\Delta_0)$
is equivalent to the property that $\mu \ll \sigma$ in $\Delta_0$, and that for some $q>1$, the
Radon-Nikodym derivative $k:= d\mu/d\sigma$ satisfies
the reverse H\"older estimate
\begin{equation}\label{eq1.wRH}
\left(\fint_\Delta k^q d\sigma \right)^{1/q} \,\lesssim\, \fint_{\Delta} k \,d\sigma\,
\approx\,  \frac{\mu(\Delta)}{\sigma(\Delta)}\,,
\quad \forall\, \Delta = B\cap E,\,\, {\rm with} \,\, B\subseteq B_0\,.
\end{equation}
The inequality in \eqref{eq1.wRH} is often referred to as an $L^q$
Reverse H\"older
(``$RH_q$") estimate.  
\end{definition}

%

\subsection{Dyadic grids and sawtooths}
\label{ss:grid}
We first give a lemma concerning the existence of a ``dyadic grid'' which can be found in \cite{DS1,DS2,Ch}.

\begin{lemma}[\bf Existence and properties of the ``dyadic grid'']\label{lemma:Christ}
\cite{DS1,DS2}, \cite{Ch}.
Suppose that $E\subset \ree$ satisfies
the AR condition \eqref{eq1.ADR}.  Then there exist
constants $ a_0>0,\, \eta>0$ and $C_1<\infty$, depending only on dimension and the
AR constant, such that for each $k \in \mathbb{Z},$
there is a collection of Borel sets (``cubes'')
$$
\mathbb{D}_k:=\{Q_{j}^k\subset E: j\in \mathfrak{I}_k\},$$ where
$\mathfrak{I}_k$ denotes some (possibly finite) index set depending on $k$, satisfying

\begin{list}{$(\theenumi)$}{\usecounter{enumi}\leftmargin=.8cm
\labelwidth=.8cm\itemsep=0.2cm\topsep=.1cm
\renewcommand{\theenumi}{\roman{enumi}}}

\item $E=\cup_{j}Q_{j}^k\,\,$ for each
$k\in{\mathbb Z}$.

\item If $m\geq k$ then either $Q_{i}^{m}\subset Q_{j}^{k}$ or
$Q_{i}^{m}\cap Q_{j}^{k}=\emptyset$.

\item For each $(j,k)$ and each $m<k$, there is a unique
$i$ such that $Q_{j}^k\subset Q_{i}^m$.

\item $\diam\big(Q_{j}^k\big)\leq C_12^{-k}$.

\item Each $Q_{j}^k$ contains some surface ball $\Delta \big(x^k_{j},a_02^{-k}\big):=
B\big(x^k_{j},a_02^{-k}\big)\cap E$.

\item $H^n\big(\big\{x\in Q^k_j:{\rm dist}(x,E\setminus Q^k_j)\leq \tau \,2^{-k}\big\}\big)\leq
C_1\,\tau^\eta\,H^n\big(Q^k_j\big),$ for all $k,j$ and for all $\tau\in (0,a_0)$.
\end{list}
\end{lemma}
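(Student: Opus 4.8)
The plan is to follow the by-now-standard greedy construction of David and Semmes \cite{DS1,DS2} and Christ \cite{Ch}. First I would build the \emph{cube skeleton}: fix a small dilation parameter $\delta\in(0,1)$ (say $\delta\le10^{-3}$; the value $2^{-k}$ of the statement is recovered at the end by relabeling generations), and for each $m\in\ZZ$ use Zorn's lemma to choose a maximal $\delta^{m}$-separated set of centers $\{z^{m}_{\alpha}\}_{\alpha}\subseteq E$, so that maximality forces $E=\bigcup_{\alpha}B(z^{m}_{\alpha},\delta^{m})$ while separation makes the balls $B(z^{m}_{\alpha},\delta^{m}/2)$ pairwise disjoint. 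I would then define a parent map sending each scale-$(m+1)$ center to a nearest scale-$m$ center (ties broken by a fixed well-ordering of the index sets), so that each center lies within $\delta^{m}$ of its parent; this organizes all the centers into a forest, and writing $\mathcal D(z)$ for the set of descendants of a center $z$ (itself included), I would set
\[
Q^{m}_{\alpha}:=\interior\Big(\overline{\textstyle\bigcup\{B(w,\delta^{m(w)}/3):w\in\mathcal D(z^{m}_{\alpha})\}}\cap E\Big),
\]
$m(w)$ being the scale of $w$. The role of choosing $\delta$ small is that then $\sum_{i\ge m}\delta^{i}\approx\delta^{m}$ with plenty of slack, so on the one hand a routine triangle-inequality computation gives $B(z^{m}_{\alpha},c_{0}\delta^{m})\cap E\subseteq Q^{m}_{\alpha}\subseteq B(z^{m}_{\alpha},C_{0}\delta^{m})$, and on the other hand descendants of distinct same-generation centers remain at mutual distance comparable to the corresponding powers of $\delta$, so the unions of $\tfrac13\delta^{(\cdot)}$-balls attached to distinct same-generation centers do not meet. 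Relabeling generations so that generation $m$ plays the role of the scale $2^{-k}$ with $\delta^{m}$ closest to $2^{-k}$, and absorbing the $\delta$-dependent constants, produces the families $\mathbb{D}_{k}$ together with the size estimates (iv) and (v) --- (v) being immediate from $B(z^{m}_{\alpha},\delta^{m}/3)\cap E\subseteq Q^{m}_{\alpha}$.

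The partition and nesting properties (i)--(iii) I would then read off from the forest. Given two cubes $Q^{m}_{\alpha}$, $Q^{m'}_{\alpha'}$ with $m\le m'$: if $z^{m'}_{\alpha'}$ descends from $z^{m}_{\alpha}$ then $\mathcal D(z^{m'}_{\alpha'})\subseteq\mathcal D(z^{m}_{\alpha})$, hence $Q^{m'}_{\alpha'}\subseteq Q^{m}_{\alpha}$; otherwise $\mathcal D(z^{m'}_{\alpha'})\cap\mathcal D(z^{m}_{\alpha})=\emptyset$ (ancestors of a point form a chain), and by the slack noted above so are the attached unions of balls, whence $Q^{m'}_{\alpha'}\cap Q^{m}_{\alpha}=\emptyset$; this is (ii). Property (iii) holds because each center has a unique ancestor in each coarser generation, and (i) holds up to the negligible union of lower-dimensional ``seams'', which one assigns away using the same fixed order (or, equivalently, one passes to half-open realizations of the cubes). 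None of this uses Ahlfors regularity beyond the net bookkeeping already invoked.

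The substantive point, and the step I expect to be the real obstacle, is the thin-boundary estimate (vi); here I would run Christ's iteration. Fix $Q=Q^{k}_{j}\in\mathbb{D}_{k}$ and $\tau\in(0,a_{0})$, and put $S_{\tau}:=\{x\in Q:\dist(x,E\setminus Q)\le\tau2^{-k}\}$, which we may assume nonempty. Fix an integer $M=M(n,\mathrm{AR})$ large enough that $C_{1}2^{-M}\ll a_{0}$, and for $i\ge0$ let $R_{i}$ be the union of those $Q'\in\mathbb{D}_{k+iM}$ with $Q'\subseteq Q$ and $\dist(Q',E\setminus Q)\le a_{0}2^{-(k+iM)}$. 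Then $(R_{i})_{i}$ is nested decreasing with $R_{0}=Q$, and $S_{\tau}\subseteq R_{i(\tau)}$ for $i(\tau):=\lfloor M^{-1}\log_{2}(a_{0}/\tau)\rfloor\approx M^{-1}\log_{2}(1/\tau)$, because the generation-$(k+i(\tau)M)$ cube containing any $x\in S_{\tau}$ has diameter $\lesssim\tau2^{-k}\approx2^{-(k+i(\tau)M)}$ and so lies that close to $E\setminus Q$. The engine of the proof is the decay estimate
\[
\sigma(R_{i+1})\le(1-\theta)\,\sigma(R_{i}),\qquad\theta=\theta(n,\mathrm{AR})\in(0,1).
\]
To see it, fix $Q'\in\mathbb{D}_{k+iM}$ contributing to $R_{i}$; by (v), $Q'$ contains a surface ball $\Delta'=B(z',a_{0}2^{-(k+iM)})\cap E$. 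Since $M$ is large, every generation-$(k+(i+1)M)$ subcube meeting $\tfrac12\Delta'$ sits inside $\tfrac34\Delta'\subseteq Q'\subseteq Q$ and lies at distance $\gtrsim a_{0}2^{-(k+iM)}$ from $E\setminus Q$, which exceeds $a_{0}2^{-(k+(i+1)M)}$; hence such a subcube does \emph{not} survive into $R_{i+1}$. By two-sided Ahlfors regularity these ``deep interior'' subcubes carry total mass $\gtrsim\sigma(\tfrac12\Delta')\approx(a_{0}2^{-(k+iM)})^{n}\gtrsim(a_{0}/C_{1})^{n}\,\sigma(Q')$, i.e. at least a fixed fraction $\theta$ of $\sigma(Q')$; since the generation-$(k+(i+1)M)$ subcubes of $Q'$ partition it, summing over the disjoint cubes $Q'$ comprising $R_{i}$ yields the decay estimate. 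Iterating, $\sigma(S_{\tau})\le\sigma(R_{i(\tau)})\le(1-\theta)^{i(\tau)}\sigma(Q)\le C\,\tau^{\eta}\sigma(Q)$ with $\eta:=M^{-1}\log_{2}\!\big(1/(1-\theta)\big)>0$, which is (vi). The only genuinely delicate point is this iteration: one must use the Ahlfors lower bound to produce, inside every cube, a definite proportion of mass sitting deep in the interior of all of its ancestors, and the Ahlfors upper bound to turn that mass back into a fixed fraction of $\sigma(Q')$, and then run the geometric decay; the dependence of $\eta$, $a_{0}$ and $C_{1}$ on $n$ and the AR constant is precisely what the iteration delivers.
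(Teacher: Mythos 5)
The paper does not prove this lemma: it is cited directly to Christ \cite{Ch} and David--Semmes \cite{DS1,DS2}, with no argument supplied. Your submission therefore fills in a citation rather than re-deriving the authors' argument; the route you take is the one in those references.

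The ``greedy'' net/parent skeleton, the bookkeeping that yields (i)--(v), and especially the Christ iteration for the thin-boundary estimate (vi) are all correctly sketched, and (vi) is the genuinely substantive part of the lemma. However, there is one step that is asserted as routine but is not: the disjointness of same-generation cubes (the $m=k$ case of (ii)). You claim that ``descendants of distinct same-generation centers remain at mutual distance comparable to the corresponding powers of $\delta$,'' and then conclude that the unions of $\tfrac13\delta^{m(w)}$-balls attached to distinct generation-$m$ centers do not meet. The naive triangle-inequality bound does not deliver this. If $w$ is a scale-$m_1$ descendant of $z^m_\alpha$ and $w'$ a scale-$m_2$ descendant of $z^m_\beta$ with $m\le m_1\le m_2$, the scale-$m_1$ ancestor $w''$ of $w'$ satisfies $|w'-w''|\le\delta^{m_1}+\delta^{m_1+1}+\cdots<\delta^{m_1}/(1-\delta)$, which already \emph{exceeds} the separation $\delta^{m_1}$ of the scale-$m_1$ net, so $|w-w'|\ge|w-w''|-|w''-w'|$ gives a negative lower bound; the telescoping chain saturates the separation because, for a maximal $\delta^{m}$-separated set, the covering radius equals the separation. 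In particular your chosen ball radius $\tfrac13\delta^{m(w)}$ is too large for any $\delta$. The fix in Christ and in Hyt\"onen--Kairema is not a sharper triangle inequality but a dichotomy built into the parent map: with the separation/covering constants tuned and $\delta$ small, one proves that proximity \emph{forces} descendance (if $d(z^{l'}_{\gamma'},z^{l}_{\gamma})$ is less than a small multiple of $\delta^{l}$ with $l'\ge l$, then $z^{l'}_{\gamma'}\preceq z^{l}_{\gamma}$), and disjointness then follows from the uniqueness of ancestors, not from a distance estimate. With a smaller ball constant (of order $c_0/4$ rather than $1/3$) this closes the gap; but as written, the claim is not established, and it is the one place where the sketch skips the actual argument rather than a routine computation. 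The rest of the proof, in particular the decay iteration $\sigma(R_{i+1})\le(1-\theta)\sigma(R_i)$ driving (vi), is sound and matches the cited sources.
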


A few remarks are in order concerning this lemma.

\begin{list}{$\bullet$}{\leftmargin=0.4cm  \itemsep=0.2cm}

\item In the setting of a general space of homogeneous type, this lemma has been proved by Christ
\cite{Ch}, with the
dyadic parameter $1/2$ replaced by some constant $\delta \in (0,1)$.
In fact, one may always take $\delta = 1/2$ (cf.  \cite[Proof of Proposition 2.12]{HMMM}).
In the presence of the Ahlfors regularity property (\ref{eq1.ADR}), the result already appears in \cite{DS1,DS2}.

\item  For our purposes, we may ignore those
$k\in \mathbb{Z}$ such that $2^{-k} \gtrsim {\rm diam}(E)$, in the case that the latter is finite.

\item  We shall denote by  $\mathbb{D}=\mathbb{D}(E)$ the collection of all relevant
$Q^k_j$, i.e., $$\mathbb{D} := \cup_{k} \mathbb{D}_k,$$
where, if $\diam (E)$ is finite, the union runs
over those $k$ such that $2^{-k} \lesssim  {\rm diam}(E)$.

\item For a dyadic cube $Q\in \mathbb{D}_k$, we shall
set $\ell(Q) = 2^{-k}$, and we shall refer to this quantity as the ``length''
of $Q$.  Evidently, $\ell(Q)\approx \diam(Q).$


\item Properties $(iv)$ and $(v)$ imply that for each cube $Q\in\mathbb{D}_k$,
there is a point $x_Q\in E$, a Euclidean ball $B(x_Q,r_Q)$ and a surface ball
$\Delta(x_Q,r_Q):= B(x_Q,r_Q)\cap E$ such that 
\begin{equation}\label{cube-ball}
c\ell(Q)\leq r_Q\leq \ell(Q)
\qquad\mbox{and}\qquad
\Delta(x_Q,2r_Q)\subset Q \subset \Delta(x_Q,Cr_Q),\end{equation}
for some uniform constants $C$, $c$.
We shall denote this ball and surface ball by
\begin{equation}\label{cube-ball2}
B_Q:= B(x_Q,r_Q) \,,\qquad\Delta_Q:= \Delta(x_Q,r_Q),\end{equation}
and we shall refer to the point $x_Q$ as the ``center'' of $Q$.


\end{list}

It will be useful to dyadicize the Corkscrew condition, and to specify
precise Corkscrew constants.
Let us now specialize to the case that  $E=\pom$ is AR,
with $\Omega$ satisfying the Corkscrew condition.
Given $Q\in \mathbb{D}(\partial\Omega)$, we
shall sometimes refer to a ``Corkscrew point relative to $Q$'', which we denote by
$X_Q$, and which we define to be a corkscrew point $X_{\Delta_Q}$ relative to the surface ball
$\Delta_Q$ (see \eqref{cube-ball}, \eqref{cube-ball2} and Definition \ref{def1.cork}).  We note that
\begin{equation}\label{eq1.cork}
\delta(X_Q) \approx \dist(X_Q,Q) \approx \diam(Q).
\end{equation}

\medskip

\begin{definition}[\bf $c_0$-exterior Corkscrew condition]\label{def1.dyadcork}
Fix  a constant $c_0\in (0,1)$,
and a domain $\Omega\subset \ree$, with AR boundary.  We say
that a cube $Q\in \dd(\pom)$ satisfies
the $c_0$-exterior Corkscrew condition, if
there is a point $z_Q\in \Delta_Q$, and a point $X^-_Q\in B(z_Q,r_Q/4) \setminus \overline{\Omega}$, such that
$B(X^-_Q,\,c_0\,\ell(Q))\subset B(z_Q,r_Q/4)\setminus \overline{\Omega}$, where
$\Delta_Q=\Delta(x_Q,r_Q)$ is the surface ball defined above in \eqref{cube-ball}--\eqref{cube-ball2}.
\end{definition}

Following \cite[Section 3]{HM-URHM} we next introduce the notion of \textit{\bf Carleson region} and \textit{\bf discretized sawtooth}.  Given a cube $Q\in\dd(\partial\Omega)$, the \textit{\bf discretized Carleson region $\dd_{Q}$} relative to $Q$ is defined by
\[
\dd_{Q}=\{Q'\in\dd(\partial\Omega):\, \, Q'\subset Q\}.
\]
Let $\F$ be family of disjoint cubes $\{Q_{j}\}\subset\dd(\partial\Omega)$. The \textit{\bf global discretized sawtooth region} relative to $\F$ is the collection of cubes $Q\in\dd$  that are not contained in any $Q_{j}\in\F$;
\[
\dd_{\F}:=\dd\setminus \bigcup\limits_{Q_{j}\in\F}\dd_{Q_{j}}.
\]
For a given $Q\in\dd$ the {\bf local discretized sawtooth region} relative to $\F$ is the collection of cubes in $\dd_{Q}$ that are not in contained in any $Q_{j}\in\F$;
\[
\dd_{\F,Q}:=\dd_{Q}\setminus \bigcup\limits_{Q_{j}\in \F} \dd_{Q_{j}}=\dd_{\F}\cap \dd_{Q}.
\]
We also introduce the ``geometric'' Carleson regions and sawtooths. In the sequel, $\Omega \subset \ree$ ($n\geq 2$) will be a 1-sided CAD domain. Let $\mathcal{W}=\W(\Omega)$ denote a collection
of (closed) dyadic Whitney cubes   of $\Omega$ (see \cite[Chapter VI]{St}),  so that the boxes  in $\mathcal{W}$
form a covering of $\Omega$ with non-overlapping interiors, and  which satisfy
\begin{equation}\label{eqWh1} 4\, {\rm{diam}}\,(I)\leq \dist(4 I,\pom) \leq  \dist(I,\pom) \leq 40 \, {\rm{diam}}\,(I)\end{equation}
and
\begin{equation}\label{eqWh2}\diam(I_1)\approx \diam(I_2), \mbox{ whenever $I_1$ and $I_2$ touch.}\end{equation}
Let $X(I)$ denote the center of $I$, let $\ell(I)$ denote the side length of $I$,
and write $k=k_I$ if $\ell(I) = 2^{-k}$. We will use ``boxes'' to refer to the Whitney cubes as just constructed, and ``cubes'' for the dyadic cubes on $\pom$. 

Given $0<\lambda<1$ and $I\in\W$ we write $I^*=(1+\lambda)I$ for the ``fattening'' of $I$. By taking $\lambda$ small enough,  we can arrange matters so that, first, $\dist(I^*,J^*) \approx \dist(I,J)$ for every
$I,J\in\W$, and, secondly, $I^*$ meets $J^*$ if and only if $\partial I$ meets $\partial J$.
 (Fattening ensures $I^*$ and $J^*$ overlap for
any pair $I,J \in\W$ whose boundaries touch. Thus, the Harnack Chain property holds locally in $I^*\cup J^*$ with constants depending on $\lambda$.)  By picking $\lambda$ sufficiently small, say $0<\lambda<\lambda_0$, we may also suppose that there is $\tau\in(1/2,1)$ such that for distinct $I,J\in\W$, $\tau J\cap I^* = \emptyset$. In what follows we will need to work with dilations $I^{**}=(1+2\,\lambda)I$ or $I^{***}=(1+4\,\lambda)I$ and in order to ensure that the same properties hold we further assume that $0<\lambda<\lambda_0/4$.

For every $Q$ we can construct a non-empty family $\W_Q^*\subset \W$ and define
\begin{equation}\label{eq2.whitney3}
U_Q := \bigcup_{I\in\,\mathcal{W}^*_Q} I^*\,,
\end{equation}
satisfying the following properties:
$X_Q\in U_Q$ and there are uniform constants $k^*$ and $K_0$ such that
\begin{align}\label{eq2.whitney2}
\begin{array}{cl}
 k(Q)-k^*\leq k_I \leq k(Q) + k^*\, \quad & \forall\, I\in \mathcal{W}^*_Q,
\\[5pt]
X(I) \rightarrow_{U_Q} X_Q\,\quad &\forall\, I\in \mathcal{W}^*_Q,
\\ [5pt]
\dist(I,Q)\leq K_0\,2^{-k(Q)}\, \quad &\forall\, I\in \mathcal{W}^*_Q\,.
\end{array}
\end{align}
Here $X(I) \rightarrow_{U_Q} X_Q$ means that the interior of $U_Q$ contains all the balls in
a Harnack Chain (in $\Omega$) connecting $X(I)$ to $X_Q$, and  moreover, for any point $Z$ contained
in any ball in the Harnack Chain, we have $
\dist(Z,\pom) \approx \dist(Z,\Omega\setminus U_Q)$
with uniform control of the implicit constants.
The constants  $k^*$, $K_0$ and the implicit constants in the condition $X(I)\to_{U_Q} X_Q$ in \eqref{eq2.whitney2}
depend on at most allowable
parameters and on $\lambda$. For later use, it will be convenient to associate to Whitney boxes a particular nearest dyadic cube. Let $I\in\W$ with $\ell(I)\lesssim\diam(\pom)$ and pick $z\in\pom$ (there could be more than one $z$ with this property but we just pick one) such that $\dist(I,\pom)=\dist(I,z)$. We define $Q_I^*\in\dd$ as the unique dyadic cube such that $z\in Q_I^*$ with $\ell(Q_I^*)=\ell(I)$. We note that the construction in \cite{HM-URHM} guarantees that $I\in \W^*_{Q_I^*}$ (indeed, this property holds for any other nearest dyadic cube with side length $\ell(I)$).
The reader is referred to \cite{HM-URHM} for full details.

For a given $Q\in\dd$, the {\bf Carleson box} relative to $Q$ is defined by
\[
T_{Q}:=\mbox{int}\left(\bigcup\limits_{Q'\in\dd_{Q}} U_{Q'}\right).
\]
For a given family $\F$ of disjoint cubes $\{Q_{j}\}\subset\dd$ and a given $Q\in\dd$ we define the {\bf local sawtooth region} relative to $\F$ by
\[
\Omega_{\F,Q}:=\mbox{int}\left(\bigcup\limits_{Q'\in\dd_{\F,Q}}U_{Q'}\right)
=
{\rm int }\,\left(\bigcup_{I\in\,\W_{\F,Q}} I^*\right),
\]
where $\W_{\F,Q}:=\bigcup_{Q'\in\dd_{\F,Q}}\W^*_{Q'}$.
Analogously, we can slightly fatten the Whitney boxes and use $I^{**}$ to define new fattened Whitney regions and sawtooth domains. More precisely,
\begin{equation}
T_{Q}^*:=\mbox{int}\left(\bigcup\limits_{Q'\in\dd_{Q}} U_{Q'}^*\right),
\qquad
\Omega_{\F,Q}^*:=\mbox{int}\left(\bigcup\limits_{Q'\in\dd_{\F,Q}}U_{Q'}^*\right)
,
\qquad
U_{Q'}^*:=\bigcup_{I\in\,\mathcal{W}^*_{Q'}} I^{**}.
\label{eq:fatten-objects}
\end{equation}
Similarly, we can define $T_Q^{**}$, $\Omega_{\F,Q}^{**}$ and $U_{Q}^{**}$ by using $I^{***}$ in place of $I^{**}$.

One can easily see that there is $\kappa_0>c^{-1}$ (depending only on the allowable parameters and where $c$ is the constant in \eqref{cube-ball})  so that 
\begin{equation}\label{tent-Q}
T_Q\subset T_{Q}^*\subset T_{Q}^{**}\subset\overline{T_Q^{**}}\subset \kappa_0 B_Q\cap\overline{\Omega}
=: B_Q^*\cap \overline{\Omega}, \qquad\forall\,Q\in\dd.
\end{equation}

Given a pairwise disjoint family $\F\subset\dd$ (we also allow $\F$ to be the null set) and a constant $\rho>0$, we derive another family $\F({\rho})\subset\dd$  from $\F$ as follows. Augment $\F$ by adding cubes $Q\in\dd$ whose side length $\ell(Q)\leq \rho$ and let $\F(\rho)$ denote the corresponding collection of maximal cubes with respect to the inclusion. Note that the corresponding discrete sawtooth region $\dd_{\F(\rho)}$ is the union of all cubes $Q\in\dd_{\F}$ such that $\ell(Q)>\rho$.
For a given constant $\rho$ and a cube $Q\in \dd$, let $\dd_{\F(\rho),Q}$ denote the local discrete sawtooth region and let $\Omega_{\F(\rho),Q}$ denote the local geometric sawtooth region relative to disjoint family $\F(\rho)$.

Given $Q\in\dd$ and $0<\epsilon<1$, if we take $\F_0=\emptyset$, one has that \
$\F_0(\epsilon \,\ell(Q))$ is the collection of $Q'\in \dd$ such that $\epsilon\,\ell(Q)/2<\ell(Q')\le \epsilon\,\ell(Q)$. We then introduce $U_{Q,\epsilon}=\Omega_{\F_0(\epsilon \,\ell(Q)),Q}$, which is a Whitney region relative to $Q$ whose distance to $\pom$ is of the order of $\epsilon\,\ell(Q)$.  For later use, we observe that given $Q_0\in\dd$, the sets $\{U_{Q,\epsilon}\}_{Q\in\dd_{Q_0}}$ have bounded overlap with constant that may depend on $\epsilon$. Indeed, suppose that there is $X\in U_{Q,\epsilon}\cap U_{Q',\epsilon}$ with $Q$, $Q'\in \dd_{Q_0}$. By construction $\ell(Q)\approx_\epsilon \delta(X)\approx_\epsilon \ell(Q')$ and
$$
\dist(Q,Q')\le \dist(X,Q)+\dist(X,Q')\lesssim_\epsilon \ell(Q)+\ell(Q')\approx_\epsilon \ell(Q).
$$
The bounded overlap property follows then at once.

\subsection{PDE estimates}

Next, we recall several facts concerning elliptic measure and Green's functions. For our first results we will only assume that $\Omega\subset\ree$, $n\ge 2$, is an open set, not necessarily connected with $\pom$ being AR. Later we will focus on the case where
$\Omega$ is 1-sided CAD.

Let $Lu=-\div (A\,\nabla u)$ be a variable coefficient second order divergence form operator with $A(X)=(a_{i,j}(X))_{1\le i,j\le n+1}$ being a real (non-necessarily symmetric) $(n+1)\times(n+1)$ matrix such that $a_{i,j}\in L^\infty(\Omega)$ for $1\le i,j\le n+1$, and $A$ is uniformly elliptic, that is, there exists $1\le \Lambda<\infty$ such that
$$
\Lambda^{-1}\,|\xi|^2\le A(X)\,\xi\cdot\xi,
\qquad
|A(X)\,\xi\cdot\eta|\le 
\Lambda\,|\xi|\,|\eta|,
$$
for all $\xi, \eta \in \re^{n+1}$ and almost every $X\in \Omega$. We write $L^\top$ to denote the transpose  of $L$, or, in other words,  $L^\top u=-\div (A^\top\,\nabla u)$ with $A^\top$ being the transpose matrix of $A$.

We say that a function $u\in W^{1,2}_{\rm loc}(\Omega)$ is a weak solution to $Lu=0$ in $\Omega$ or that $Lu=0$ in the weak sense in $\Omega$, if
$$
\iint_{\Omega} A(X)\,\nabla u(X)\cdot \nabla\Phi(X)\,dX=0,
\qquad
\forall\,\Phi\in C_0^\infty(\Omega).
$$

Associated with $L$ and $L^\top$ one can respectively construct the elliptic measures $\{\omega_L^X\}_{X\in \Omega}$ and $\{\omega_{L^\top}^X\}_{X\in\Omega}$, and the Green functions $G_L$ and $G_{L^\top}$. The construction of the Green functions dates back to Gruter and Widman \cite{GW}, while the existence of the corresponding elliptic measures
is an application of the Riesz representation theorem in the case the domain is Wiener regular. The behavior of $\omega^X_L$ (resp. $\omega^X_{L^T}$) and $G_L$ (resp. $G_{L^T}$) as well as the relationship between them is something that depends crucially on the fact that $\Omega$ is a 1-sided CAD. For a comprehensive treatment of the subject we refer the reader to the forthcoming monograph \cite{HMT} (a summary of some of these properties can also be found in \cite{Z}).

\begin{lemma}\label{Bourgainhm}  Suppose that
$\partial \Omega$ is $n$-dimensional AR.  Then there are uniform constants $c\in(0,1)$
and $C\in (1,\infty)$, depending only on $n$, AR, and $\Lambda$
such that for every $x \in \partial\Omega$, and every $r\in (0,\diam(\partial\Omega))$,
if $Y \in \Omega \cap B(x,cr),$ then
\begin{equation}\label{eq2.Bourgain1}
\omega_L^{Y} (\Delta(x,r)) \geq 1/C>0 \;.
\end{equation}
\end{lemma}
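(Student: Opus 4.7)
\medskip
\noindent\textbf{Plan.}
The statement is the classical Bourgain-type lower bound for $L$-elliptic measure on an Ahlfors regular boundary. My plan is a barrier argument: I would construct an $L$-subsolution $h$ in $\Omega\cap B(x,2r)$ whose boundary values are dominated by those of $\omega_L^{\cdot}(\Delta(x,2r))$ on $\partial(\Omega\cap B(x,2r))$ and which satisfies $h(Y)\ge 1/C$ for every $Y\in\Omega\cap B(x,cr)$, with some small absolute $c$. The weak maximum principle will then yield $\omega_L^Y(\Delta(x,2r))\ge h(Y)\ge 1/C$, as desired (after a harmless relabeling of $r$).

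The barrier is to be defined as a Green potential. I would first extend the coefficient matrix $A$ to a uniformly elliptic matrix $\tilde A$ on all of $\ree$ (for instance, $\tilde A:=I$ outside $\Omega$, mollified across $\pom$ if necessary), write $\tilde L:=-\div(\tilde A\nabla\cdot)$, and let $G^B(Z,W)$ denote the Dirichlet Green function of $\tilde L$ in the Euclidean ball $B:=B(x,2r)$. Then set
$$
h(Z):=\frac{1}{Kr}\int_{\Delta(x,r)} G^B(Z,W)\,d\sigma(W),
$$
for a sufficiently large constant $K$ (depending on dimension, ellipticity, and AR). By construction $h$ vanishes on $\partial B$, and since the source $\sigma|_{\Delta(x,r)}$ is supported on $\pom$, $h$ is $\tilde L$-harmonic, and hence $L$-harmonic, in $\Omega\cap B$.

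The key analytic input is the classical Littman--Stampacchia--Weinberger pointwise estimate $G^B(Z,W)\lesssim |Z-W|^{1-n}$ together with the matching lower bound $G^B(Z,W)\gtrsim r^{1-n}$ when $Z$ and $W$ both lie well inside $B$. Combined with the upper AR bound and the standard Riesz-kernel estimate $\int_{\Delta(x,r)}|Z-W|^{1-n}\,d\sigma(W)\lesssim r$ valid for $Z\in\pom$, this yields $h\le 1$ on $\pom\cap B$ provided $K$ is chosen large enough. On the other hand, the lower AR bound $\sigma(\Delta(x,r))\gtrsim r^n$, together with the Green function lower bound applied for $Y\in B(x,cr)$ and $W\in\Delta(x,r)$ (so $|Y-W|\le (1+c)r$ and both points sit inside $B(x,r)$, far from $\partial B$), gives $h(Y)\gtrsim 1/K$, as required.

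The main obstacle, and the point that requires care, is the justification of the weak maximum principle at the merely AR boundary $\pom$, where points need not all be Wiener regular. I would handle this by using the Perron-Wiener-Brelot construction of $L$-elliptic measure adopted in \cite{HMT} and the classical fact that the set of irregular boundary points has $\omega_L$-measure zero; since $h$ is continuous on $\overline{\Omega\cap B}$ (being the Green potential of a bounded, compactly supported measure) with the boundary values verified above, the comparison $h\le\omega_L^{\cdot}(\Delta(x,2r))$ then follows from a standard approximation argument using continuous boundary data.
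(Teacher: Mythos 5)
The paper does not actually prove this lemma; it simply cites \cite[Lemma~1]{B} for the Laplacian and \cite{HMT} (also \cite{HKM}, \cite{Z}) for general elliptic $L$. Your proposal reconstructs the classical Bourgain barrier argument, which is indeed the argument behind those references, and the outline is sound: the Green potential of $\sigma|_{\Delta(x,r)}$ in the ball $B(x,2r)$, combined with the Gr\"uter--Widman pointwise bounds and the weak maximum principle, does the job.

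Three points deserve tightening. First, your caution about Wiener-irregular boundary points is unnecessary here: the paper notes immediately before the lemma that Ahlfors regularity of $\pom$ implies the Capacity Density Condition, which gives Wiener regularity of \emph{every} boundary point (see \cite[Lemma~3.27]{HLMN}, \cite{Z}). So the comparison $h\le\omega_L^{\cdot}(\Delta(x,2r))$ in $\Omega\cap B$ can be run directly against the continuous boundary data $1_{\Delta(x,2r)}$ (which is locally constant near each point of $\Delta(x,2r)$), with no appeal to sets of $\omega_L$-measure zero. Second, the lower bound $G^B(Y,W)\gtrsim r^{1-n}$ for $Y\in B(x,cr)$ and $W\in\Delta(x,r)$ is \emph{not} an immediate consequence of the near-pole Gr\"uter--Widman estimate, which requires $|Y-W|\le\theta\,\dist(Y,\partial B)$ with $\theta<1$; here $|Y-W|$ may be comparable to $\dist(Y,\partial B)$. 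One must interpolate via a Harnack chain inside $B(x,2r)$ (i.e., apply the near-pole lower bound at a point close to $W$ and then propagate along a chain to $Y$ using Harnack's inequality for the positive $\tilde L$-solution $G^B(\cdot,W)$); this is standard, but should be said. Third, Gr\"uter--Widman \cite{GW} treats symmetric $A$; for the non-symmetric case covered by the paper one needs the extension of those Green function bounds (as in \cite{HMT}), though this is a citation issue rather than a gap in your construction.
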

We refer the reader to \cite[Lemma 1]{B} for the proof in the harmonic case and to \cite{HMT} for general elliptic operators.  See also \cite[Theorem 6.18]{HKM} and \cite[Section 3]{Z}.

The next result incorporates the construction of the Green function in \cite{GW} with some of the properties which are derived from the assumptions on the domain (all details can be found in \cite{HMT}). We note that, in particular, the AR hypothesis implies that $\pom$ is Wiener regular at every point. In fact, it satisfies the Capacity Density Condition (CDC) (see 
\cite[Lemma 3.27]{HLMN} and \cite[Section 3]{Z}).

\begin{lemma} 
 \label{lemma2.green}
Let $\Omega$ be an open set with $n$-dimensional AR boundary. Let $Lu=-\div (A\,\nabla u)$ be as above. There are positive, finite constants $C$, depending only on dimension, $\Lambda$
and $c_\theta$, depending on dimension, $\Lambda$, and $\theta \in (0,1),$
such that $G_L$, the Green function associated with $L$, satisfies
\begin{equation}\label{eq2.green}
G_L(X,Y) \leq C\,|X-Y|^{1-n}\,
\end{equation}
\begin{equation}\label{eq2.green2}
c_\theta\,|X-Y|^{1-n}\leq G_L(X,Y)\,,\quad {\rm if } \,\,\,|X-Y|\leq \theta\, \delta(X)\,, \,\, \theta \in (0,1)\,;
\end{equation}
\begin{equation}
\label{eq2.green-cont}
G_L(\cdot,Y)\in C(\overline{\Omega}\setminus\{Y\}) \qquad \mbox{and}\qquad G_L(\cdot,Y)\big|_{\pom}\equiv 0\,,\qquad \forall Y\in\Omega;
\end{equation}
\begin{equation}
\label{eq2.green3}
G_L(X,Y)\geq 0\,,\qquad \forall X,Y\in\Omega\,,\, X\neq Y;
\end{equation}
\begin{equation}\label{eq2.green4}
G_L(X,Y)=G_{L^\top}(Y,X)\,,\qquad \forall X,Y\in\Omega\,,\, X\neq Y.
\end{equation}
Moreover,  $G_L(\cdot, Y)\in W^{1,2}_{\rm loc}(\Omega\setminus \{Y\})$ for any $Y\in\Omega$ and satisfies
$L G_L(\cdot,Y)=\delta_Y$ in the weak sense in $\Omega$, that is,
$$
\iint_\Omega A(X)\,\nabla_X G_{L}(X,Y) \cdot\nabla\Phi(X)\, dX=\Phi(Y), \qquad
\forall\,\Phi \in C_0^\infty(\Omega).
$$
In particular, $G_L(\cdot,Y)$ is a weak solution to $L G_L(\cdot,Y)=0$ in the open set $\Omega\setminus\{Y\}$.

Finally, the following Riesz formula holds
\begin{equation}\label{eq2.14}
\iint_\Omega
A^\top(Y)\,\nabla_Y G_{L^\top}(Y,X) \cdot\nabla\Phi(Y)\, dY
=
\Phi(X)-\int_{\partial\Omega} \Phi\,d\omega_L^X,
\qquad
\mbox{for a.e. } X\in\Omega,
\end{equation}
and for every $\Phi \in C_0^\infty(\ree)$.
\end{lemma}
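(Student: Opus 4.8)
The plan is to obtain all of these statements from the Gr\"uter--Widman construction of the Green function \cite{GW}, supplemented by the boundary regularity that Ahlfors regularity of $\pom$ automatically supplies; full details are in the forthcoming monograph \cite{HMT}, so I only outline the structure. First I would fix $Y\in\Omega$ and, for small $\rho>0$, let $u_\rho=u_\rho(\cdot,Y)$ be the finite-energy weak solution of $Lu_\rho=|B(Y,\rho)|^{-1}1_{B(Y,\rho)}$ with zero boundary data (solving first in $\Omega\cap B(Y,R)$ and letting $R\to\infty$ if $\Omega$ is unbounded). The interior De Giorgi--Nash--Moser estimates, combined with a comparison with the fundamental solution of a suitable constant-coefficient operator, yield $0\le u_\rho(X)\lesssim|X-Y|^{1-n}$ uniformly in $\rho$ (nonnegativity from the maximum principle), and Caccioppoli's inequality gives a uniform bound on $\iint_{\Omega\setminus B(Y,s)}|\nabla u_\rho|^2$ for each $s>0$. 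Passing to a subsequential limit as $\rho\to0$ produces $G_L(\cdot,Y)\in W^{1,2}_{\rm loc}(\Omega\setminus\{Y\})$ satisfying $LG_L(\cdot,Y)=\delta_Y$ weakly, and from this one reads off \eqref{eq2.green}, \eqref{eq2.green3}, and the weak-solution assertion. For the lower bound \eqref{eq2.green2}, the hypothesis $|X-Y|\le\theta\delta(X)$ keeps $X$, $Y$, and the segment between them at distance $\gtrsim_\theta|X-Y|$ from $\pom$, so one may compare $G_L(\cdot,Y)$ with the Green function of $L$ on an interior ball centered near $Y$, which is monotone under domain inclusion and enjoys the classical lower bound $\gtrsim|X-Y|^{1-n}$ at scale $|X-Y|$ near its pole.

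Next I would establish \eqref{eq2.green-cont}. Since $\pom$ is AR it satisfies the Capacity Density Condition (cf.\ \cite[Lemma 3.27]{HLMN} and \cite[Section 3]{Z}), so every boundary point is Wiener regular and, quantitatively, any bounded nonnegative weak solution vanishing on a boundary ball is H\"older continuous up to that portion of $\pom$ (see \cite[Theorem 6.18]{HKM}); applying this to $G_L(\cdot,Y)$ away from its pole---where it is bounded by \eqref{eq2.green}---gives $G_L(\cdot,Y)\in C(\overline{\Omega}\setminus\{Y\})$ and $G_L(\cdot,Y)\big|_{\pom}\equiv0$. For the symmetry \eqref{eq2.green4} I would run the standard duality argument: for $X\ne Z$, test the weak equation $L^\top G_{L^\top}(\cdot,Z)=\delta_Z$ against $G_L(\cdot,Y)$ and, separately, $LG_L(\cdot,Y)=\delta_Y$ against $G_{L^\top}(\cdot,Z)$, and observe that both tested integrals equal $\iint_\Omega A\,\nabla G_L(\cdot,Y)\cdot\nabla G_{L^\top}(\cdot,Z)$; the extension of the weak formulation to these (non-smooth, non-compactly-supported) test functions is legitimate by cutting off near the two poles and using \eqref{eq2.green}, \eqref{eq2.green-cont} and the local energy bounds, and the conclusion is $G_{L^\top}(Y,Z)=G_L(Z,Y)$.

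Finally, \eqref{eq2.14} is the Riesz representation formula. Given $\Phi\in C_0^\infty(\ree)$, I would write $\Phi=v+(\Phi-v)$, where $v$ is the $L$-solution in $\Omega$ with boundary data $\Phi\big|_{\pom}$, so that $v(X)=\int_{\pom}\Phi\,d\omega_L^X$ by definition of elliptic measure and $\Phi-v$ has zero trace on $\pom$. The weak formulation of $L^\top G_{L^\top}(\cdot,X)=\delta_X$, extended to $\Phi-v$ (splitting it into a piece supported away from $X$, handled by $W^{1,2}$-density since $\nabla G_{L^\top}(\cdot,X)\in L^2$ off the pole, and a smooth piece near $X$, handled directly), gives $\iint_\Omega A^\top\nabla_Y G_{L^\top}(Y,X)\cdot\nabla(\Phi-v)(Y)\,dY=(\Phi-v)(X)$; and the remaining term $\iint_\Omega A^\top\nabla G_{L^\top}(\cdot,X)\cdot\nabla v$ vanishes because $v$ is $L$-harmonic and $G_{L^\top}(\cdot,X)$ has zero trace (again justified by a cutoff near $X$ using \eqref{eq2.green}). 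Adding the two, and recalling $v(X)=\int_{\pom}\Phi\,d\omega_L^X$, yields \eqref{eq2.14} for a.e.\ $X$. The main obstacle throughout is not conceptual but technical: making the limit $\rho\to0$ and the various integrations by parts rigorous near the pole of the Green function---precisely the delicate point that is carried out in detail in \cite{GW} and \cite{HMT}.
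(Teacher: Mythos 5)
Your sketch follows exactly the route the paper itself indicates: the paper does not prove this lemma but defers to the Gr\"uter--Widman construction in \cite{GW} and to \cite{HMT} for full details, noting only (as you also use) that the AR hypothesis forces the Capacity Density Condition and hence Wiener regularity at every boundary point. Your outline of the approximation scheme, the comparison argument for the lower bound, the CDC-based boundary continuity, the duality argument for symmetry, and the decomposition $\Phi=v+(\Phi-v)$ for the Riesz formula is consistent with those references and with the paper's intent.
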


Next, we recall a Caffarelli-Fabes-Mortola-Salsa estimate (cf. \cite{CFMS}, and \cite{HMT} for the current version).

\begin{lemma} \label{lemma2.cfms}
Let $\Omega$ be a 1-sided CAD domain.
Let $B:=B(x,r)$, with $x\in \pom$, $0<r<\diam(\pom)$. Then for
$X\in \Omega\setminus 2\,B$ we have
\begin{equation}\label{eq.CFMS}
\frac1C\omega_L^X(\Delta)
\leq
r^{n-1}G_L(X,X_\Delta) \leq C \omega_L^X(\Delta).
\end{equation}
The constant in \eqref{eq.CFMS} depends {\bf only} on $\Lambda$,
dimension and on the constants in the 1-sided CAD character.
\end{lemma}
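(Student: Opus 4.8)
The plan is to derive the two-sided comparison between $\omega_L^X(\Delta)$ and $r^{n-1}G_L(X,X_\Delta)$ from three ingredients already available in the excerpt: the pointwise bounds on the Green function (Lemma \ref{lemma2.green}), Bourgain's nondegeneracy estimate for elliptic measure (Lemma \ref{Bourgainhm}), and the interior regularity (Harnack inequality and H\"older continuity up to the boundary via the CDC, which is noted to hold here). I would first fix notation: $B=B(x,r)$, $\Delta=\Delta(x,r)$, $X_\Delta$ a corkscrew point for $\Delta$ so that $\delta(X_\Delta)\approx r$, and $X\in\Omega\setminus 2B$. The function $u(\cdot):=\omega_L^{\,\cdot}(\Delta)$ — equivalently $u(Y)=\omega_L^Y(\Delta)$ as a function of $Y$ — is a bounded weak solution of $Lu=0$ in $\Omega\setminus\overline{B}$ (more precisely, in $\Omega$ away from $\Delta$), with $0\le u\le 1$, and $G_L(X,\cdot)$ is likewise a nonnegative weak solution of $L^\top$ in $\Omega\setminus\{X\}$ vanishing on $\pom$; both vanish continuously on $\pom\setminus\overline{B}$. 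The heart of the argument is a comparison of these two nonnegative solutions on the region $\Omega\cap (B(x,2r)\setminus \overline{B(x,r/2)})$ or a similar annular shell, combined with the maximum principle.

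The key steps, in order, are as follows. First, the \emph{lower bound} $r^{n-1}G_L(X,X_\Delta)\lesssim \omega_L^X(\Delta)$: by Lemma \ref{lemma2.green}, on the sphere $\partial B(x,cr)$ (with $c$ Bourgain's constant, shrunk if needed so $X_\Delta$-type corkscrew points are available), one has $G_L(X,Y)\lesssim |X-Y|^{1-n}\approx r^{1-n}$ for $Y$ at distance $\approx r$ from $x$ — here one must be a bit careful and instead compare on $\partial(\Omega\cap B(x,2r))$, using that $G_L(X,\cdot)=0$ on $\pom$ and $G_L(X,Y)\lesssim r^{1-n}$ on the spherical part since $|X-Y|\gtrsim r$; meanwhile $\omega_L^Y(\Delta)\gtrsim 1$ on $\Omega\cap B(x,cr)$ by Lemma \ref{Bourgainhm}, and by Harnack $\omega_L^Y(\Delta)\gtrsim 1$ on the compact piece of $\partial(\Omega\cap B(x,2r))$ that stays a definite distance from $\pom$. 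Thus $r^{n-1}G_L(X,Y)\lesssim \omega_L^Y(\Delta)$ on the whole boundary of $\Omega\cap B(x,2r)$ (trivially where both sides involve the boundary of $\Omega$), and since $X\in\Omega\setminus 2B$ lies outside this region, the maximum principle for $L^\top$ versus $L$ — applied via the adjoint relationship, or more cleanly by running the comparison in the region $\Omega\setminus \overline{B(x,2r)}$ where both $G_L(X,\cdot)$ with the roles suitably arranged and $\omega^{\cdot}(\Delta)$ are solutions — yields the bound at $X$. Second, the \emph{upper bound} $\omega_L^X(\Delta)\lesssim r^{n-1}G_L(X,X_\Delta)$: here one uses the lower Green-function bound \eqref{eq2.green2}, namely $G_L(X_\Delta',Y)\gtrsim r^{1-n}$ for $Y$ within $\theta\delta$ of a corkscrew point, to get $r^{n-1}G_L(X,Y)\gtrsim \omega_L^X(\Delta)\cdot(\text{something})$; the standard route is to fix the annulus, note $\omega_L^X(\Delta)$ is a solution vanishing on $\pom$ near $x$ and bounded by $1$, compare it on the annular shell with $r^{n-1}G_L(X,\cdot)$ which is $\gtrsim 1$ on the inner corkscrew sphere by \eqref{eq2.green2}, and push the comparison outward to $X$ by the maximum principle. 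In both directions the passage from "$X_\Delta$" to "any corkscrew point" and the transfer of the comparison across the annulus to the far point $X$ is done by the boundary Harnack-type use of the maximum principle together with Harnack chains (available since $\Omega$ is $1$-sided CAD).

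The main obstacle I expect is organizing the maximum-principle comparison cleanly given that $G_L$ solves $L^\top$-type equations in its second variable while $\omega^{\cdot}(\Delta)$ solves $L$ in its pole variable: one must either work with a single operator by fixing the right variable (e.g. compare $Y\mapsto G_L(X,Y)$, which solves $L^\top_Y$, against $Y\mapsto \omega_{L^\top}^Y(\Delta)$ and then use $\eqref{eq2.green4}$, or keep everything in the $X$-variable using that $X\mapsto G_L(X,X_\Delta)$ solves $LG_L(\cdot,X_\Delta)=0$ in $\Omega\setminus\{X_\Delta\}$ while $X\mapsto\omega_L^X(\Delta)$ solves $L$ as well, so they are genuinely comparable solutions of the \emph{same} operator $L$ on $\Omega\setminus (2B)$), and then verify the boundary data domination on $\partial(\Omega\setminus 2B)=\pom\cup\partial(2B)$: on $\pom$ both vanish (away from $\Delta$) and near $\Delta$ one uses AR/CDC Hölder vanishing plus \eqref{eq2.green}, and on $\partial(2B)\cap\Omega$ one uses Lemma \ref{Bourgainhm} and Harnack for the lower direction and \eqref{eq2.green} for the upper. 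Everything else — the dependence of constants only on $n$, $\Lambda$, and the $1$-sided CAD character — then follows by inspection, since that is exactly the dependence in Lemmas \ref{Bourgainhm} and \ref{lemma2.green} and in the Harnack chain constants. I would also remark that this is the standard CFMS argument, so the write-up can simply cite \cite{CFMS} and \cite{HMT} for the full details rather than reproduce them.
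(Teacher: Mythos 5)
The paper itself does not prove this lemma; it simply says ``we recall a Caffarelli--Fabes--Mortola--Salsa estimate (cf.\ \cite{CFMS}, and \cite{HMT} for the current version)'', and your proposal likewise outlines the standard CFMS comparison-principle argument and then defers the details to the same two references, so at the level of \emph{approach} you and the paper agree.

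That said, there is a genuine gap in the details you do sketch, and it is worth flagging because it is precisely the delicate point the full argument has to handle. For the right-hand inequality $r^{n-1}G_L(X,X_\Delta)\lesssim\omega_L^X(\Delta)$ you propose to compare the two functions on $\partial\big(\Omega\cap B(x,2r)\big)$ and then invoke the maximum principle. The comparison you establish on the spherical part $\partial B(x,2r)\cap\Omega$ is only justified where $\delta(Y)\gtrsim r$: there Bourgain's lemma together with a Harnack chain does give $\omega_L^Y(\Delta)\gtrsim1$, while the global Green bound \eqref{eq2.green} gives $r^{n-1}G_L(X,Y)\lesssim1$. But on the portion of $\partial B(x,2r)\cap\Omega$ lying close to $\partial\Omega$ (not on $\partial\Omega$ itself, which is the case you dismiss as ``trivial''), both quantities tend to zero and you have compared neither their vanishing rates nor their normalizations; Bourgain and Harnack say nothing about $\omega_L^Y(\Delta)$ when $Y$ is near $\partial\Omega\setminus\overline\Delta$ at scale $\approx r$. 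As written, the boundary-domination step therefore fails. The standard fix is either to run the comparison against $\omega_L^{\,\cdot}(M\Delta)$ for a fixed large $M$ chosen so that Bourgain's lower bound holds at \emph{every} point of $\partial B(x,2r)\cap\Omega$ (and then absorb the enlargement by the doubling Lemma~\ref{lemma.double}), or to bring in a boundary H\"older decay/Carleson estimate to compare vanishing rates along that sphere near $\partial\Omega$; you gesture at the second option (``AR/CDC H\"older vanishing'') but do not actually carry it out. A similar caveat applies to the left-hand inequality, which is the genuinely harder direction: a direct two-function comparison in $\Omega\setminus\overline{2B}$ fails there because $\omega_L^X(\Delta)$ has boundary datum $1$ on $\Delta$ while the Green function vanishes, so one must work in an auxiliary domain obtained by excising a small ball around $X_\Delta$ (or appeal to the Carleson estimate), rather than ``push outward by the maximum principle'' from the corkscrew sphere as you describe. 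Both issues are resolved in the cited references, so citing them is fine, but the sketch you present in between does not, on its own, close.
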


\begin{lemma}\label{lemma.double}
Suppose that $\Omega$ is a  1-sided CAD domain. Let $B:=B(x,r)$,  $x\in \pom$,
$\Delta:= B\cap\partial\Omega$ and $X\in \Omega\setminus 4B.$
Then there is a uniform constant $C$, depending {\bf only} on $\Lambda$,
dimension and on the constants in the 1-sided CAD character, such that
\begin{equation}
\omega_L^X(2\Delta)\leq C\omega_L^X(\Delta).
\label{eq:elliptic-doubling}
\end{equation}
\end{lemma}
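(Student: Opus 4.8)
The plan is to prove the doubling estimate \eqref{eq:elliptic-doubling} by combining the Caffarelli--Fabes--Mortola--Salsa estimate of Lemma \ref{lemma2.cfms} with Harnack's inequality and the Ahlfors regularity of $\pom$. The point is that for a pole $X\in\Omega\setminus 4B$, we have $X\notin 2(2B)$ as well as $X\notin 2B$, so Lemma \ref{lemma2.cfms} is applicable with both $B$ and $2B$ in place of the generic ball; this reduces the comparison of $\hm_L^X(2\Delta)$ and $\hm_L^X(\Delta)$ to the comparison of the Green function values $r^{n-1}G_L(X,X_\Delta)$ and $(2r)^{n-1}G_L(X,X_{2\Delta})$.

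First I would fix notation: let $\Delta=\Delta(x,r)$, $2\Delta=\Delta(x,2r)$, and let $X_\Delta$, $X_{2\Delta}$ be corkscrew points relative to $\Delta$ and $2\Delta$ respectively, with $\delta(X_\Delta)\approx r\approx\delta(X_{2\Delta})$ and $|X_\Delta-X_{2\Delta}|\lesssim r$ by \eqref{eq1.cork} and the corkscrew condition. Next I would apply Lemma \ref{lemma2.cfms} twice: since $X\in\Omega\setminus 4B\subset \Omega\setminus 2B$, we get $\hm_L^X(\Delta)\approx r^{n-1}G_L(X,X_\Delta)$, and since $X\in\Omega\setminus 4B=\Omega\setminus 2(2B)$, we get $\hm_L^X(2\Delta)\approx (2r)^{n-1}G_L(X,X_{2\Delta})\approx r^{n-1}G_L(X,X_{2\Delta})$. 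Thus it suffices to show $G_L(X,X_{2\Delta})\lesssim G_L(X,X_\Delta)$. This in turn follows from Harnack's inequality applied to the nonnegative solution $G_L(X,\cdot)$ of $L^\top$-type (more precisely, by \eqref{eq2.green4}, $G_L(X,\cdot)=G_{L^\top}(\cdot,X)$ is a nonnegative weak solution of $L^\top$ in $\Omega\setminus\{X\}$): one connects $X_\Delta$ to $X_{2\Delta}$ by a Harnack chain inside $\Omega$ of uniformly bounded length — which exists because $\Omega$ satisfies the Harnack chain condition and both points lie at distance $\approx r$ from $\pom$ with $|X_\Delta-X_{2\Delta}|\lesssim r$ — staying away from the pole $X$ since $X\in\Omega\setminus 4B$ while the chain lives near $B$. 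Hence $G_L(X,X_{2\Delta})\le C\,G_L(X,X_\Delta)$ with $C$ depending only on the allowable parameters, which gives \eqref{eq:elliptic-doubling}.

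One technical point I would be careful about: the CFMS estimate as stated in Lemma \ref{lemma2.cfms} requires the pole to lie outside $2B$ for the ball in question, so to invoke it for $2\Delta$ I genuinely need $X\notin 2(2B)=4B$, which is exactly the hypothesis; this is the reason the lemma is phrased with $\Omega\setminus 4B$ rather than $\Omega\setminus 2B$. A second point is ensuring the Harnack chain connecting $X_\Delta$ and $X_{2\Delta}$ does not come close to the pole $X$: since every ball in such a chain has diameter comparable to its distance to $\pom$, which is $\lesssim r$, and is centered within $O(r)$ of $x$, while $\delta(X)\ge 3r$ (as $X\notin 4B$ forces $|X-x|\ge 4r$, hence $\dist(X,B(x,2r))\ge 2r$, say), the chain stays in a region where $G_L(X,\cdot)$ is a genuine solution and Harnack applies uniformly.

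The main obstacle, such as it is, is bookkeeping rather than conceptual: one must verify that all the geometric quantities ($\delta(X_\Delta)$, $\delta(X_{2\Delta})$, $|X_\Delta-X_{2\Delta}|$, the separation of the Harnack chain from the pole) are controlled purely in terms of $r$ and the 1-sided CAD constants, so that the final constant $C$ in \eqref{eq:elliptic-doubling} depends only on $\Lambda$, the dimension, and the 1-sided CAD character, as claimed. Once these comparisons are in place, the result is immediate from Lemmas \ref{lemma2.cfms} and the Harnack chain condition.
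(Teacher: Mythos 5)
The paper does not actually prove Lemma \ref{lemma.double}; it records it, alongside Lemma \ref{lemma2.cfms}, as a known fact from \cite{CFMS} and the forthcoming reference \cite{HMT}. So there is no in-text proof to compare against. That said, your route --- two applications of the CFMS estimate to reduce the doubling inequality to a comparison of Green-function values, and then Harnack on $G_L(X,\cdot)=G_{L^\top}(\cdot,X)$ --- is precisely the standard argument, and structurally it is correct.

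There is, however, one slip in the verification that the Harnack chain avoids the pole. You write ``$\delta(X)\ge 3r$,'' but the hypothesis $X\in\Omega\setminus 4B$ only gives $|X-x|\ge 4r$, i.e.\ $\dist(X,B(x,2r))\ge 2r$; it gives no lower bound at all on $\delta(X)=\dist(X,\pom)$ (the pole can be very close to the boundary, just far from $x$). Even after correcting to $\dist(X,B(x,2r))\ge 2r$, the assertion that the chain from $X_\Delta$ to $X_{2\Delta}$ stays clear of $X$ requires more care: the Harnack Chain condition (Definition \ref{def1.hc}) gives a chain of bounded cardinality with $\diam(B_k)\approx\dist(B_k,\pom)$, and one can check the chain stays in $B(x,C'r)$ for some $C'$ depending on the Harnack chain constants --- but there is no a priori reason that $C'\le 4$, so the chain might, in principle, come near $X$.

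The cleanest way to close this gap is to avoid the Harnack chain between distinct corkscrew points altogether. Note that $X_\Delta$ is already a corkscrew point relative to $2\Delta$: if $B(X_\Delta,cr)\subset B(x,r)\cap\Omega$, then $B(X_\Delta,(c/2)(2r))\subset B(x,2r)\cap\Omega$, so $X_\Delta$ is a corkscrew for $2\Delta$ with constant $c/2$. Since (as indicated in Hypothesis \ref{hyp2}) the CFMS estimate of Lemma \ref{lemma2.cfms} is valid for any choice of corkscrew point (with constants depending only on the corkscrew constant, hence still allowable), one may take $X_{2\Delta}=X_\Delta$. Then, using $X\notin 2B$ and $X\notin 2(2B)=4B$,
\[
\omega_L^X(2\Delta)\approx (2r)^{n-1}G_L(X,X_\Delta)\approx r^{n-1}G_L(X,X_\Delta)\approx\omega_L^X(\Delta),
\]
and the doubling estimate follows with no further use of the Harnack Chain condition beyond what is implicit in Lemma \ref{lemma2.cfms} itself. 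If you prefer to keep distinct corkscrew points and the Harnack chain, you should either argue that the chain can be taken to lie in a fixed dilate of $B$ smaller than $4B$ (which requires invoking more than the bare Definition \ref{def1.hc}), or replace the hypothesis $X\in\Omega\setminus 4B$ by $X\in\Omega\setminus CB$ for $C$ large depending on the Harnack chain constants, which weakens the statement.
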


\section{Auxiliary results}\label{section:aux}

We have the following Poincar\'e inequality which is an improvement of  \cite[Lemma 4.8]{HM-URHM}.

\begin{lemma}\label{poincare}
Suppose that $\Omega$ is a 1-CAD.
Fix $Q_0\in \dd$, a (possibly empty) pairwise disjoint family $\F\subset \dd_{Q_0},$ and let
$Q\in \dd_{\F,Q_0}$. Then for every $p,\, 1\leq p<\infty$, and for every small $\epsilon >0$,
there is a constant $C_{\epsilon,p}$ such that
\begin{equation}\label{p-eq}
\iint_{\Omega_{\F(\epsilon\,\ell(Q)),Q}}|f(X)-c_{Q,\epsilon}|^p\, dX \leq C_{\epsilon,p}\,
\ell(Q)^p\iint_{\Omega_{\F(\epsilon \,\ell(Q)),Q}}|\nabla f(X)|^p\,dX,
\end{equation}
where $c_{Q,\epsilon}:= |\Omega_{\F(\epsilon \,\ell(Q)),Q}|^{-1}\iint_{\Omega_{\F(\epsilon\,\ell(Q)),Q}}f\,dX.$ In particular, the previous Poincar\'e inequality holds for $U_{Q,\epsilon}$ replacing $\Omega_{\F(\epsilon \,\ell(Q)),Q}$.
\end{lemma}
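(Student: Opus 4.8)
The plan is to reduce the Poincaré inequality on the sawtooth region $\Omega_{\F(\epsilon\,\ell(Q)),Q}$ to the standard Poincaré inequality on a ball (or on a single fattened Whitney box), and then to glue these local estimates together using the chain structure that the sawtooth inherits from the Harnack Chain condition. First I would record the elementary fact that $\Omega_{\F(\epsilon\,\ell(Q)),Q}$ is, up to constants depending on $\epsilon$ and the allowable parameters, a union of a controlled number of fattened Whitney boxes $I^*$, each of side length $\approx_\epsilon \ell(Q)$, and that this union is connected (indeed it is a bounded open set with a John-type or chord-arc-type structure, being a sawtooth region in a 1-sided CAD — see \cite{HM-URHM}); moreover its diameter is $\approx \ell(Q)$ and its Lebesgue measure is $\approx_\epsilon \ell(Q)^{n+1}$. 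For the very last assertion about $U_{Q,\epsilon}$, one simply notes that $U_{Q,\epsilon} = \Omega_{\F_0(\epsilon\,\ell(Q)),Q}$ by definition, so it is the special case $\F = \F_0 = \emptyset$ of the general statement.

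Next I would establish the inequality on a single fattened Whitney box: on any cube $I^*=(1+\lambda)I$ the classical Poincaré inequality gives $\iint_{I^*}|f - \langle f\rangle_{I^*}|^p \lesssim_p \ell(I)^p \iint_{I^*}|\nabla f|^p$, with constant depending only on $p$, $n$ and $\lambda$. The standard way to pass from box-by-box estimates to a global estimate on a connected union $\mathcal O := \bigcup_{I\in \mathcal A} I^*$ of finitely many such boxes is a telescoping argument along the adjacency graph: since consecutive boxes $I^*, J^*$ overlap in a set of measure $\approx \ell(I)^{n+1}$ whenever $\partial I$ meets $\partial J$, the averages $\langle f\rangle_{I^*}$ and $\langle f\rangle_{J^*}$ differ by at most $\lesssim \ell(I)\left(\fint_{I^*\cup J^*}|\nabla f|^p\right)^{1/p}$, and chaining through at most $N \lesssim_\epsilon 1$ boxes to reach a fixed reference box controls $|\langle f\rangle_{I^*} - c|$ for a common constant $c$ (e.g. $c = \langle f\rangle_{I_0^*}$ for a fixed $I_0$). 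Summing $\iint_{I^*}|f-c|^p \lesssim \iint_{I^*}|f - \langle f\rangle_{I^*}|^p + |I^*|\,|\langle f\rangle_{I^*} - c|^p$ over the (boundedly many) boxes in $\mathcal A$ yields $\iint_{\mathcal O}|f-c|^p \lesssim_{\epsilon,p}\ell(Q)^p \iint_{\mathcal O}|\nabla f|^p$. Finally, replacing the constant $c$ by the true average $c_{Q,\epsilon} = \fint_{\Omega_{\F(\epsilon\,\ell(Q)),Q}} f$ only improves the left side, by the standard fact that $\|f - \fint f\|_{L^p} \le 2\|f - c\|_{L^p}$ for any constant $c$.

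The step I expect to require the most care is controlling the \emph{number of boxes} in the chain (hence the combinatorial constant in the telescoping), and, relatedly, verifying that the sawtooth region is connected with the right uniformity. Here one leans on the properties (\ref{eq2.whitney2}) of the Whitney collections $\W_Q^*$ and on the structure of $\Omega_{\F,Q}$ from \cite[Section 3]{HM-URHM}: all boxes $I$ appearing in $\Omega_{\F(\epsilon\,\ell(Q)),Q}$ have $\ell(I)\approx_\epsilon \ell(Q)$ and lie within distance $\lesssim_\epsilon \ell(Q)$ of $Q$, so there are at most $C_\epsilon$ of them, and the Harnack Chain property of $\Omega$ (built into the 1-sided CAD hypothesis) guarantees that any two of them are joined through the region by a chain of overlapping fattened boxes of length $\le C_\epsilon$. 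Once these geometric facts are in hand — essentially quoting the sawtooth construction of \cite{HM-URHM} — the analytic content is just the elementary box Poincaré inequality plus telescoping, and the improvement over \cite[Lemma 4.8]{HM-URHM} is precisely that we track the dependence on $\epsilon$ explicitly and allow the general nonempty family $\F$.
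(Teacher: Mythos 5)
Your proposal is correct and follows essentially the same approach as the paper's: both arguments reduce to the classical Poincar\'e inequality on individual fattened Whitney boxes of side $\approx_\epsilon\ell(Q)$, and then glue the local estimates using the bounded-cardinality chain lemma for sawtooth regions of \cite[Lemma 3.61]{HM-URHM}, together with the bound $\#\W_{\F(\epsilon\,\ell(Q)),Q}\le C_\epsilon$. The only cosmetic difference is that you chain box averages to a fixed reference box to obtain a common constant $c$ and then swap $c$ for the true average, whereas the paper starts from the double-integral bound $\iint\iint|f(X)-f(Y)|^p\,dX\,dY$ and telescopes between each pair of boxes; these are organizationally different but mathematically equivalent.
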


\begin{proof}
Without loss of generality we may assume that $\Omega_{\F(\epsilon\,\ell(Q)),Q}\neq\emptyset$.
We first observe that
\begin{multline}\label{poin-aux1}
\iint_{\Omega_{\F(\epsilon\,\ell(Q)),Q}}
\left|f(X)\,-\,\frac1{|\Omega_{\F(\epsilon \,\ell(Q)),Q}|}\iint_{\Omega_{\F(\epsilon \,\ell(Q)),Q}}f(Y)\,dY\right|^pdX
\\
\le
\frac1{|\Omega_{\F(\epsilon \,\ell(Q)),Q}|} \iint_{\Omega_{\F(\epsilon\,\ell(Q)),Q}} \iint_{\Omega_{\F(\epsilon\,\ell(Q)),Q}} |f(X)-f(Y)|^p\,dX\,dY
\\
\le
\frac1{|\Omega_{\F(\epsilon \,\ell(Q)),Q}|}
\sum_{I, J\in\ \W_{\F(\epsilon \,\ell(Q)),Q}} \iint_{I^*}\iint_{J^*} |f(X)-f(Y)|^p\,dX\,dY.
\end{multline}
Fix now $I, J\in\ \W_{\F(\epsilon \,\ell(Q)),Q}$. Note that $\dist(I,J)\lesssim \ell(I)\approx \ell(J)\approx \ell(Q)$  (where the implicit constants depend upon $\epsilon$). By \cite[Lemma 3.61]{HM-URHM}
there is a chain $\{I_1,I_2,\dots,I_N\}\subset\W_{\F(\epsilon \,\ell(Q)),Q}$,
of bounded cardinality $N$ depending only on dimension, the 1-sided CAD constants of $\Omega$, and $\epsilon$, such that
$I_1=J,\, I_N=I$, $\ell(I_j)\approx\ell(I)\approx\ell(J)$ for each $j$
(again the implicit constants depend upon $\epsilon$),  and for which
$\cup_{j=1}^NI^*_j$ contains a Harnack Chain
which connects the centers of $I$ and $J$.  Moreover,
the chain may be constructed so that $I^*_j\cap I^*_{j+1}\neq\emptyset$, $1\le j\le N-1$. Hence, by telescoping and using the standard Poincar\'e inequality
\begin{align}\label{po-tele}
&\left(\iint_{I^*}\iint_{J^*} |f(X)-f(Y)|^p\,dX\,dY\right)^\frac1p
\\ \nonumber
&\ \lesssim
\ell(I)^{\frac{n+1}p}\left(\left(\iint_{I^*} |f(X)-f_{I^*}|^p\,dX\right)^\frac1p
+
\left(\iint_{J^*} |f(Y)-f_{J^*}|^p\,dY\right)^\frac1p\right)
+
\ell(I)^{\frac{2\,(n+1)}p} \sum_{j=1}^{N-1} |f_{I^*_j} - f_{I^*_{j+1}}|
\\ \nonumber
&\  \lesssim
\ell(I)^{\frac{n+1}p+1}\left(\iint_{\Omega_{\F(\epsilon \,\ell(Q)),Q}} |\nabla f(X)|^p\,dX\right)^\frac1p
+
\ell(I)^{\frac{2\,(n+1)}p} \sum_{j=1}^{N-1} |f_{I^*_j} - f_{I^*_{j+1}}|,
\end{align}
where we have used the notation  $f_{I^*_j}:= |I^*_j|^{-1}\dint_{I^*_j} f$, and where the implicit constants depend on $p$ and $\epsilon$.
To analyze the last term, take $1\le j\le {N-1}$. Recall that $I_j^*=(1+\lambda)\,I_j$ with $I_j$ being a dyadic Whitney cube. The same applies to  $I_{j+1}^*$. Also, by choice of $\lambda$ and since $I^*_j\cap I_{j+1}^*\neq\emptyset$ it follows that $\partial I_j$  meets $\partial I_{j+1}$, which in turns implies that $\ell(I_j)\approx \ell(I_{j+1})$. Hence, one can find a cube $\tilde{I}\subset I^*_j\cap I^*_{j+1}$ with $\ell(\tilde{I})\approx \lambda\ell(I_j)\approx \lambda\ell(I_{j+1})$.
Then, by using again the standard Poincar\'e inequality we conclude
\begin{align*}
|f_{I^*_j} - f_{I^*_{j+1}}|
&\le
|f_{I^*_j}-f_{\tilde{I}}|+|f_{\tilde{I}}- f_{I^*_{j+1}}|
\le
\frac1{|\tilde{I}|}\iint_{\tilde{I}} |f(X)-f_{I^*_j}|\,dX
+
\frac1{|\tilde{I}|}\iint_{\tilde{I}} |f(X)-f_{I^*_{j+1}}|\,dX
\\
&\lesssim
\frac1{|I^*_{j}|}\iint_{I^*_{j}} |f(X)-f_{I^*_j}|\,dX
+
\frac1{|I^*_{j+1}|}\iint_{I^*_{j+1}} |f(X)-f_{I^*_{j+1}}|\,dX
\\
&\lesssim
\frac{\ell(I^*_{j})}{|I^*_{j}|}\iint_{I^*_{j}} |\nabla f(X)|\,dX
+
\frac{\ell(I^*_{j+1})}{|I^*_{j+1}|}\iint_{I^*_{j+1}} |\nabla f(X)|\,dX
\end{align*}
where the implicit constants depend on $n$ and $\lambda$. Now, we plug this estimate into \eqref{po-tele}, use that
$\ell(I_j^*)\approx \ell(I)\approx\ell(Q)$ (with constants that depend on $\epsilon$), the bounded overlap of the family $\{I_j^*\}_{j=1}^N$  and that $N$ depends upon $\epsilon$ (it also depends on $I$ and $J$,  but in a uniformly bounded manner for $\epsilon$ fixed):
\begin{align*}
&\left(\iint_{I^*}\iint_{J^*} |f(X)-f(Y)|^p\,dX\,dY\right)^\frac1p
\\
&\quad\lesssim
\ell(I)^{\frac{n+1}p+1}\left(\iint_{\Omega_{\F(\epsilon \,\ell(Q)),Q}} |\nabla f(X)|^p\,dX\right)^\frac1p
+
\ell(I)^{\frac{n+1}p+1-\frac{n+1}{p'}}
\iint_{\cup_{j=1}^N I_j^*} |\nabla f(Y)|\,dY
\\
&\quad\lesssim
\ell(I)^{\frac{n+1}p+1}\left(\iint_{\Omega_{\F(\epsilon \,\ell(Q)),Q}} |\nabla f(X)|^p\,dX\right)^\frac1p
+
\ell(I)^{\frac{n+1}p+1-\frac{n+1}{p'}} \left|\cup_{j=1}^N I_j^*\right|^{\frac{1}{p'}}
\left(\iint_{\cup_{j=1}^N I_j^*} |\nabla f(Y)|^p\,dY\right)^{\frac{1}{p}}
\\
&\quad\lesssim
|\Omega_{\F(\epsilon \,\ell(Q)),Q}|^{\frac1p}\,\ell(Q)\,\left(\iint_{\Omega_{\F(\epsilon \,\ell(Q)),Q}} |\nabla f(Y)|^p\,dY\right)^\frac1p.
\end{align*}
Both in the last line above, and in order to conclude the proof of \eqref{p-eq} from this inequality  and  \eqref{poin-aux1}, we need to observe that
$$
\#\W_{\F(\epsilon \,\ell(Q)),Q}
\le
\#\big\{Q'\in\dd_Q: \ell(Q')>\epsilon\,\ell(Q)\big\}
\le
C_\epsilon.
$$
\end{proof}

The following result is of purely real variable nature and establishes that if a measure satisfies an $A_\infty$ type condition on a cube $Q_0$ then a stopping time argument allows us to extract a pairwise disjoint family $\F\subset \dd_{Q_0}$ such that the averages of the measure for cubes  ``above''  the sawtooth (i.e., in  $\dd_{\F,Q_0}$) are essentially constant. Additionally, the complement of the union of the cubes in $\F$ are an ample portion of $Q_0$, this means that the local sawtooth region $\Omega_{\F,Q_0}$ has an ample contact with $Q_0$.

\begin{lemma}\label{lemma:stop-time}
Let $Q_0\in\dd$ and let $\mu$ be a non-negative regular Borel measure on $Q_0$. Assume that
$\mu\ll \sigma$ on $Q_0$ and write $k=d\mu/d\sigma$. Assume also that there exist $K_0\ge 1$, $\theta>0$ such that
\begin{equation}
1\le\frac{\mu(Q_{0})}{\sigma(Q_{0})}\le K_0\qquad \mbox{ and }
\qquad
\frac{\mu(F)}{\sigma(Q_{0})}
\le K_0\,\left(\frac{\sigma(F)}{\sigma(Q_{0})}\right)^{\theta},
\quad
\forall\,F\subset Q_0.
\label{eq:mu-Ainf-norm}
\end{equation}
Then, there exists a pairwise disjoint family $\F=\{Q_j\}_j\subset\dd_{Q_0}\setminus \{Q_0\}$ such that
\begin{equation}
\sigma\Big(Q_0\setminus\bigcup_{Q_j\in \F}Q_j\Big)\ge K_1^{-1} \sigma(Q_0)
\label{eq:ample-saw}
\end{equation}
and
\begin{equation}
\frac12\le \frac{\mu(Q)}{\sigma(Q)}\le K_0\,K_1,
\qquad
\forall\,Q\in\dd_{\F,Q_0},
\label{eq:saw-mu}
\end{equation}
where $K_1:=(4\,K_0)^{\frac1{\theta}}$
\end{lemma}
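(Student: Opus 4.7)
The plan is to run the standard stopping time on $Q_0$ with two stopping criteria, one for ``bad upper averages'' and one for ``bad lower averages'', and then use the $A_\infty$-type hypothesis to bound the total stopped mass.

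First I would let $\F$ be the collection of all cubes $Q\in\dd_{Q_0}\setminus\{Q_0\}$ that are maximal (with respect to inclusion) subject to satisfying at least one of
\textbf{(a)} $\mu(Q)/\sigma(Q)>K_0K_1$ or \textbf{(b)} $\mu(Q)/\sigma(Q)<1/2$. Maximality gives pairwise disjointness. The hypothesis \eqref{eq:mu-Ainf-norm} guarantees $Q_0$ itself satisfies neither (a) nor (b) (since $K_1\ge 1$ and $K_0\ge 1$), which is why it is legitimate to exclude it. A standard maximality argument then shows that every $Q\in\dd_{\F,Q_0}$ fails both (a) and (b), giving \eqref{eq:saw-mu}.

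The main work is \eqref{eq:ample-saw}. Split $\F=\F_a\cup\F_b$ according to which stopping criterion triggers, and set $E_\bullet:=\bigcup_{Q_j\in\F_\bullet}Q_j$. For $\F_a$, summing the disjoint estimates $\mu(Q_j)>K_0K_1\sigma(Q_j)$ and then using the easy upper bound $\mu(E_a)\le\mu(Q_0)\le K_0\sigma(Q_0)$ from \eqref{eq:mu-Ainf-norm} yields $\sigma(E_a)<\sigma(Q_0)/K_1$ (this step needs no use of $\theta$). For $\F_b$, summing $\mu(Q_j)<\sigma(Q_j)/2$ gives $\mu(E_b)\le\sigma(E_b)/2\le\sigma(Q_0)/2\le\mu(Q_0)/2$, so $\mu(Q_0\setminus E_b)\ge\sigma(Q_0)/2$; plugging this into the $A_\infty$-type bound of \eqref{eq:mu-Ainf-norm} applied to $F=Q_0\setminus E_b$ gives
\[
\tfrac12\sigma(Q_0)\le K_0\,\sigma(Q_0)^{1-\theta}\sigma(Q_0\setminus E_b)^\theta,
\]
from which $\sigma(Q_0\setminus E_b)\ge(2K_0)^{-1/\theta}\sigma(Q_0)=2^{1/\theta}K_1^{-1}\sigma(Q_0)$. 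Combining:
\[
\sigma(E_a\cup E_b)\le\tfrac{1}{K_1}\sigma(Q_0)+\bigl(1-\tfrac{2^{1/\theta}}{K_1}\bigr)\sigma(Q_0),
\]
which gives \eqref{eq:ample-saw} as soon as $2^{1/\theta}\ge 2$, i.e.\ $\theta\le 1$.

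The only potential obstacle is therefore this last restriction. I would handle it by observing that one may assume $\theta\le 1$ without loss of generality: if \eqref{eq:mu-Ainf-norm} held with some $\theta>1$ then, letting $\sigma(F)\to 0$, Lebesgue differentiation would force $k\equiv 0$, contradicting $\mu(Q_0)\ge\sigma(Q_0)>0$ (alternatively, since $\sigma(F)/\sigma(Q_0)\le 1$, the hypothesis trivially continues to hold with $\theta$ replaced by any smaller positive exponent, in particular by $1$, at the cost of the same $K_0$). With $\theta\le 1$ in hand, the two bounds combine to give exactly $\sigma(Q_0\setminus\bigcup_j Q_j)\ge K_1^{-1}\sigma(Q_0)$, completing the proof.
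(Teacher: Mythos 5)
Your stopping-time construction and the verification of \eqref{eq:saw-mu} coincide with the paper's, but your accounting for \eqref{eq:ample-saw} is arranged differently, and the difference is instructive. You convert the $\F_b$ information into a $\sigma$-measure lower bound on $Q_0\setminus E_b$ via the $A_\infty$-type hypothesis \emph{before} subtracting off $\sigma(E_a)$; since the conversion costs you the exponent $1/\theta$, the two pieces only fit inside $K_1^{-1}\sigma(Q_0)$ when $2^{1/\theta}\ge 2$, i.e.\ $\theta\le 1$. The paper instead carries everything in $\mu$-measure: it bounds $\mu(F_1)\le\tfrac12\sigma(Q_0)$ directly, bounds $\sigma(F_2)\le K_1^{-1}\sigma(Q_0)$ and feeds \emph{that} into the $A_\infty$-type inequality to get $\mu(F_2)\le\tfrac14\sigma(Q_0)$, concludes $\mu(F_0)\ge\tfrac14\sigma(Q_0)$ by subtraction in $\mu$, and only then makes the single conversion $\sigma(F_0)\ge(4K_0)^{-1/\theta}\sigma(Q_0)$. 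That ordering uses the hypothesis twice but avoids any constraint on $\theta$, so it proves the lemma exactly as stated for all $\theta>0$. Your patch for $\theta>1$ is sound in its primary form: the Lebesgue differentiation argument (valid here since $\sigma$ is Ahlfors regular, hence doubling, and dyadic cubes have small boundary) does show $k\equiv 0$ a.e., contradicting $\mu(Q_0)\ge\sigma(Q_0)>0$, so the hypotheses with $\theta>1$ are vacuous. However, your ``alternative'' reduction to $\theta=1$ does not quite work as stated: lowering the exponent to $1$ changes $K_1$ to $K_1'=4K_0$, and for $\theta>1$ one has $K_1'>K_1$, so the conclusion you would obtain, $\sigma(F_0)\ge (K_1')^{-1}\sigma(Q_0)$, is strictly weaker than the advertised $\sigma(F_0)\ge K_1^{-1}\sigma(Q_0)$ (and the stopping family itself changes since the threshold $K_0K_1$ changes). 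Relying on the Lebesgue argument alone, your proof is correct; it is simply a slightly less economical route than the paper's, which gets by without any case split on $\theta$.
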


\begin{proof}
The proof is based on a stopping time argument similar to those used in the
proof of the Kato square root conjecture \cite{HMc, HLMc, AHLMcT}, a more refined version appears in \cite{HLMN, HM4}.

Let $\F=\{Q_j\}_j$ be the collection of dyadic cubes contained in $Q_0$
that are maximal, and therefore pairwise disjoint,  with respect to the property that either
\begin{equation}\label{eqn:stop}
 \frac{\mu(Q)}{\sigma(Q)}<  \frac12
\qquad
\mbox{ or }
\qquad
 \frac{\mu(Q)}{\sigma(Q)} >K_0\,K_1.
\end{equation}
Note that \eqref{eq:mu-Ainf-norm} and the fact that $K_1>1$ imply that $\F\subset\dd_{Q_0}\setminus \{Q_{0}\}$. Also, the maximality of the cubes in $\F$ immediately gives \eqref{eq:saw-mu}.

On the other hand, we observe that $\F=\F_{1}\cup \F_{2}$ where $\F_{1}$ corresponds to the family stopping time cubes with respect to the first criterion in \eqref{eqn:stop} and $\F_{2}=\F\setminus\F_{1}$ is comprised of the maximal cubes for which the first condition in \eqref{eqn:stop} fails but the second holds. Set
$$
F_0=Q_0\setminus\bigg(\bigcup_{Q_j\in\F} Q_j\bigg),
\qquad\quad
F_1=\bigcup_{Q_j\in\F_1} Q_j,
\qquad\quad
F_2=\bigcup_{Q_j\in\F_2} Q_j,
$$
so that $Q_0=F_0\cup F_1\cup F_2$.

We first handle the cubes in $\F_1$ which, by construction, satisfy
$$
\mu(F_1)
=
\sum_{Q_j\in \F_1} \mu(Q_j)
\le
\frac12 \sum_{Q\in\F_{1}} \sigma(Q)
=
\frac12 \sigma(F_1)
\le
\frac12 \sigma(Q_0).
$$
On the other hand, the definition of the family $\F_2$ and \eqref{eq:mu-Ainf-norm} give
$$
\sigma(F_2)
=
\sum_{Q_j\in \F_{2}} \sigma(Q_j)
\le
\frac1{K_0\,K_1}\,\sum_{Q_j\in \F_{2}} \mu(Q_j)
=
\frac1{K_0\,K_1}\, \mu(F_2)
\le
\frac1{K_0\,K_1}\, \mu(Q_0)
\le
\frac{1}{K_1}\,\sigma(Q_0).
$$
This, \eqref{eq:mu-Ainf-norm}, and our choice of $K_1$ yield
$$
\frac{\mu(F_2)}{\sigma(Q_0)}
\le
K_0
\left(
\frac{\sigma(F_2)}{\sigma(Q_0)}
\right)^{\theta}
\le
\frac{K_0}{K_1^{\theta}}
=
\frac14.
$$
Collecting the estimates obtained for $F_1$ and $F_2$, and using again \eqref{eq:mu-Ainf-norm}
we see that
$$
\sigma(Q_0)
\le
\mu(Q_0)
=
\mu(F_0)+\mu(F_1)+\mu(F_2)
\le
\mu(F_0)+
\frac34\,\sigma(Q_0).
$$
Hiding the last term on the right hand side and by \eqref{eq:mu-Ainf-norm} one can conclude that
$$
\frac14
\le
\frac{\mu(F_0)}{\sigma(Q_0)}
\le
K_0
\left(\frac{\sigma(F_0)}{\sigma(Q_0)}\right)^{\theta},
$$
which is \eqref{eq:ample-saw} and the proof is complete.
\end{proof}

With a slight abuse of notation,  let $Q^0$ be either $\pom$, and in that case $\dd_{Q^0}:=\dd$, or a fixed cube in $\dd$, hence $\dd_{Q^0}$ is the family of dyadic subcubes of $Q^0$. Let $\alpha=\{\alpha_Q\}_{Q\in\dd_{Q^0}}$ be a sequence of non-negative numbers indexed by the dyadic ``cubes'' in $\dd_{Q^0}$, and for any collection $\dd'\subset\dd_{Q^0}$, we define an associated discrete ``measure''
\begin{equation}
\mut_\alpha(\dd'):= \sum_{Q\in\dd'}\alpha_{Q}.
\label{eq:mut-defi}
\end{equation}
We say that $\mut_\alpha$ is a ``Carleson measure'' (with respect to $\sigma$) in $Q^0$,  if
\begin{equation}\label{eq6.0}
\|\mut_\alpha\|_{\C(Q^0)}:= \sup_{Q\in\dd_{Q^0}} \frac{\mut_\alpha(\dd_{Q})}{\sigma(Q)} <\infty.
\end{equation}
For simplicity, when $Q^0=\pom$ we simply write $\|\mut_\alpha\|_{\C}$.

Our next result establishes that to show that $\mut_\alpha$ is a Carleson measure it suffices to check \eqref{eq6.0} only on ``sawtooths with an ample contact'':

\begin{lemma}\label{lemma:HN-Car}
Let $Q^0$ be either $\pom$ or a fixed cube in $\dd$. Let $\alpha=\{\alpha_Q\}_{Q\in\dd_{Q^0}}$ be a sequence of non-negative numbers and consider $\mut_\alpha$ as defined above. Given $M_1>0$ and $K_1\ge 1$, we assume that for every $Q_0\in\dd_{Q^0}$ there exists a pairwise disjoint family $\F_{Q_0}=\{Q_j\}_{j}\subset \dd_{Q_0}\setminus \{Q_0\}$ such that
\begin{equation}
\sigma\Big(Q_0\setminus\bigcup_{Q_j\in \F_{Q_0}}Q_j\Big)\ge K_1^{-1} \sigma(Q_0)
\label{eq:ample-sawtooth:lemma}
\end{equation}
and
\begin{equation}\label{eq:Car:sawtooth:lemma}
\mut_\alpha(\dd_{\F_{Q_0},Q_0})\le M_1\, \sigma(Q_0).
\end{equation}
Then, $\mut_\alpha$ is Carleson measure in $Q^0$ and moreover
\begin{equation}
\|\mut_\alpha\|_{\C(Q^0)}= \sup_{Q\in\dd_{Q^0}} \frac{\mut_\alpha(\dd_{Q})}{\sigma(Q)}
\le K_1\, M_1.
\label{eq:Carleson:lemma}
\end{equation}
\end{lemma}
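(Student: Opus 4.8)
The plan is to prove the asserted Carleson bound by a ``self‑improvement'' (hiding) argument, after a routine reduction to the case in which $\mut_\alpha$ is already known to have finite Carleson norm over $Q^0$.

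\medskip
\noindent\textbf{Step 1: the hiding argument, assuming a priori that $\mathcal{N}:=\|\mut_\alpha\|_{\C(Q^0)}<\infty$.} Fix $Q\in\dd_{Q^0}$ and let $\F_Q=\{Q_j\}_j$ be the family furnished by the hypothesis. Since the $Q_j$ are pairwise disjoint and $\F_Q\subset\dd_Q\setminus\{Q\}$, every cube in $\dd_Q$ belongs either to $\dd_{\F_Q,Q}$ or to exactly one $\dd_{Q_j}$; because all $\alpha_{Q'}\ge 0$, this gives the (possibly infinite) additive identity
\[
\mut_\alpha(\dd_Q)=\mut_\alpha(\dd_{\F_Q,Q})+\sum_{Q_j\in\F_Q}\mut_\alpha(\dd_{Q_j})\le M_1\,\sigma(Q)+\mathcal{N}\sum_{Q_j\in\F_Q}\sigma(Q_j).
\]
By \eqref{eq:ample-sawtooth:lemma} and disjointness, $\sum_j\sigma(Q_j)=\sigma\big(\bigcup_jQ_j\big)\le\big(1-K_1^{-1}\big)\sigma(Q)$. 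Dividing by $\sigma(Q)$ and taking the supremum over $Q\in\dd_{Q^0}$ yields $\mathcal{N}\le M_1+\big(1-K_1^{-1}\big)\mathcal{N}$, whence $\mathcal{N}\le K_1M_1$ after absorbing the last term — and it is precisely here that the finiteness of $\mathcal{N}$ is needed.

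\medskip
\noindent\textbf{Step 2: removing the a priori finiteness.} I would fix $Q\in\dd_{Q^0}$ and, for $N\in\N$, truncate by setting $\alpha^N_{Q'}:=\alpha_{Q'}$ if $Q'\subset Q$ and $\ell(Q')\ge 2^{-N}\ell(Q)$, and $\alpha^N_{Q'}:=0$ otherwise. Two facts make this work. First, since $\alpha^N\le\alpha$ pointwise, the very same families $\F_{Q_0}$ satisfy \eqref{eq:ample-sawtooth:lemma}–\eqref{eq:Car:sawtooth:lemma} for $\mut_{\alpha^N}$, with the same constants $M_1,K_1$ (the ample‑contact condition does not involve $\alpha$, and $\mut_{\alpha^N}(\dd_{\F_{Q_0},Q_0})\le\mut_\alpha(\dd_{\F_{Q_0},Q_0})\le M_1\sigma(Q_0)$). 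Second, $\mut_{\alpha^N}$ has finite Carleson norm: applying the hypothesis to the single cube $Q'$ and noting that $Q'\in\dd_{\F_{Q'},Q'}$ (because $\F_{Q'}\subset\dd_{Q'}\setminus\{Q'\}$, so $Q'$ is contained in none of the stopping cubes) gives the crude pointwise bound $\alpha_{Q'}\le\mut_\alpha(\dd_{\F_{Q'},Q'})\le M_1\sigma(Q')$; since $\mut_{\alpha^N}$ is supported on at most $N+1$ consecutive dyadic generations and the cubes of a fixed generation partition their ancestor, one obtains $\|\mut_{\alpha^N}\|_{\C(Q^0)}\le M_1(N+1)<\infty$. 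Hence Step~1 applies to $\mut_{\alpha^N}$ and yields $\mut_{\alpha^N}(\dd_Q)\le K_1M_1\,\sigma(Q)$. Letting $N\to\infty$, monotone convergence gives $\mut_{\alpha^N}(\dd_Q)\uparrow\mut_\alpha(\dd_Q)$, so $\mut_\alpha(\dd_Q)\le K_1M_1\,\sigma(Q)$; as $Q\in\dd_{Q^0}$ was arbitrary, this is exactly \eqref{eq:Carleson:lemma}.

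\medskip
\noindent The main obstacle is the a priori finiteness required in Step~1: the hiding step is only a rearrangement and is vacuous if $\mathcal{N}=\infty$. Step~2 resolves this, and its two constraints — preservation of the hypotheses, and \emph{honest} finiteness of the truncated Carleson norm — are exactly what dictates the shape of the truncation (cut the $\alpha$'s from above, keep the same sawtooth families) and force the use of the elementary estimate $\alpha_{Q'}\lesssim\sigma(Q')$ that is itself read off from the hypothesis.
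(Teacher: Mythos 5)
Your proof is correct and follows essentially the same strategy as the paper: establish the bound by a hiding argument that first requires an a priori finite Carleson norm, then obtain that finiteness by truncation and pass to the limit by monotone convergence. The only (minor) difference is in the truncation scheme: the paper exhausts $\dd_{Q^0}$ by an arbitrary increasing sequence of finite subfamilies $\dd^N$ and bounds the truncated Carleson norm crudely by $(\ell_N)^{-n}\sum_{Q'\in\dd^N}\alpha_{Q'}$, whereas you fix a target cube $Q$, keep only $N+1$ generations below it, and invoke the elementary consequence $\alpha_{Q'}\le M_1\,\sigma(Q')$ of the hypothesis (applied with $Q_0=Q'$) to get the finiteness bound $M_1(N+1)$. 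Both routes work; yours has the small advantage of making the finiteness estimate explicit in terms of $M_1$, at the cost of observing the bound $\alpha_{Q'}\lesssim\sigma(Q')$, which the paper's version does not need.
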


\begin{proof}
We first take a sequence $\dd^1\subset\dd^2\subset\cdots \subset\dd^N\subset\cdots\subset\dd_{Q^0}$ such that $\dd_{Q^0}=\cup_N\dd^N$ and $\# \dd^N=N$. For each $N\ge 1$ we let $\alpha_N:=\{\alpha_Q^N\}_{Q\in \dd_{Q^0}}$ where
$\alpha_Q^N:=\alpha_Q$ if $Q\in\dd^N$ and $\alpha_Q^N:=0$ otherwise. Let $\mut_{\alpha_N}$ be the corresponding discrete measure associated with $\alpha_N$ and set $\ell_N=\min\{\ell(Q):Q\in \dd^N\}>0$.

We first note that
$$
\|\mut_{\alpha_N}\|_{\C(Q^0)}
=
\sup_{Q\in\dd_{Q^0}} \frac{\mut_{\alpha_N}(\dd_{Q})}{\sigma(Q)}
=
\sup_{Q\in\dd_{Q^0}, \ell(Q)\ge  \ell_N} \frac{1}{\sigma(Q)}\sum_{Q'\in\dd_{Q}\cap \dd^N} \alpha_{Q'}
\lesssim
\frac{1}{(\ell_N)^n}\,\sum_{Q'\in\dd^N} \alpha_{Q'} <\infty.
$$
Fix now $Q_0\in\dd_{Q^0}$ and let $\F_{Q_0}$ be the associated pairwise disjoint family given by our hypotheses. By the definition of $\dd_{\F_{Q_0},Q_0}$ and by \eqref{eq:ample-sawtooth:lemma} we have
\begin{multline*}
\mut_{\alpha_N} (\dd_{Q_0}\setminus\dd_{\F_{Q_0},Q_0})
=
\sum_{Q_j\in\F_{Q_0}} \mut_{\alpha_N}(\dd_{Q_j})
\le
\|\mut_{\alpha_N}\|_{\C(Q^0)}\,\sum_{Q_j\in\F_{Q_0}} \sigma(Q_j)
\\
=
\|\mut_{\alpha_N}\|_{\C(Q^0)}\,\sigma\Big(\bigcup_{Q_j\in \F_{Q_0}} Q_j\Big)
\le
(1-K_1^{-1})\,\|\mut_{\alpha_N}\|_{\C(Q^0)}\,\sigma(Q_0).
\end{multline*}
This and \eqref{eq:Car:sawtooth:lemma} yield
$$
\frac{\mut_{\alpha_N} (\dd_{Q_0})}{\sigma(Q_0)}
=
\frac{\mut_{\alpha_N} (\dd_{\F_{Q_0},Q_0})}{\sigma(Q_0)}
+
\frac{\mut_{\alpha_N} (\dd_{Q_0}\setminus\dd_{\F_{Q_0},Q_0})}{\sigma(Q_0)}
\le
M_1
+
(1-K_1^{-1})\,\|\mut_{\alpha_N}\|_{\C(Q^0)}.
$$
Note that this estimate holds for every $Q_0\in\dd_{Q^0}$. Hence, we conclude that
$$
\|\mut_{\alpha_N}\|_{\C(Q^0)}
=
\sup_{Q\in\dd_{Q^0}} \frac{\mut_{\alpha_N}(\dd_{Q})}{\sigma(Q)}
\le
M_1
+
(1-K_1^{-1})\,\|\mut_{\alpha_N}\|_{\C(Q^0)}.
$$
We can then hide the last term (which is finite as observed above) to obtain  $\|\mut_{\alpha_N}\|_{\C(Q^0)}\le K_1 \,M_1$ and letting $N\to\infty$ we conclude  \eqref{eq:Carleson:lemma}.
\end{proof}

\section{Proof of the main result}\label{section:proof-main}
Given $0<c_0<1$, let $\B=\B(c_0)$
denote the collection
of $Q\in\dd$ for which the $c_0$-exterior Corkscrew condition (see
Definition \ref{def1.dyadcork}) fails.
Set $\alpha:=\{\alpha_Q\}_{Q\in \dd}$ with
\begin{equation}\label{eq2.5}
\alpha_Q := \left\{
\begin{array}{ll}
\sigma(Q),&
\ {\rm if \ } Q \in \B,
\\[6pt]
0,&\
{\rm otherwise}.
\end{array}
\right.
\end{equation}
Associate to $\alpha$  the discrete measure $\mut_\alpha$ as above, which depends on the parameter $c_0$.
We are going to prove that under the assumptions in Theorem \ref{theor:main} the collection $\B$ satisfies a packing condition, i.e., that $\mut_\alpha$ is a discrete Carleson measure, provided that $c_0$ is small enough.
\begin{proposition}\label{prop:mu-Carleson}
Under the assumptions of Theorem \ref{theor:main}, there is $c_0$ sufficiently small and $M_1\ge 1$, such that if $\B=\B(c_0)$,
its associated measure $\mut_\alpha$ as above satisfies the packing condition
\begin{equation}\label{desired:Car}
\|\mut_\alpha\|_{\C}:=
\sup_{Q\in\dd} \,\frac{ \mut_\alpha(\dd_Q)}{\sigma(Q)}\,  =\, \sup_{Q\in\dd}\,\frac1{\sigma(Q)}
\sum_{Q'\in\B:\, Q'\subset Q} \sigma(Q') \,\leq M_1.
\end{equation}
The  constants $c_0$ and $M_1$  depend only upon dimension, $\Lambda$, the
1-sided CAD constants, $\big\||\nabla A|\,\delta\big\|_\infty$, $\|\nabla A\|_{\C(\Omega)}$ and finally $q$ and $C$ in \eqref{eq:higher-inte}.
\end{proposition}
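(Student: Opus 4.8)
The plan is to verify the hypotheses of Lemma~\ref{lemma:HN-Car} for the collection $\B=\B(c_0)$ and the discrete measure $\mut_\alpha$ attached, as in \eqref{eq:mut-defi}, to the sequence $\alpha$ from \eqref{eq2.5}. Thus it is enough to show that, once $c_0$ is chosen small (depending only on the allowable parameters), for every $Q_0\in\dd$ there is a pairwise disjoint family $\F_{Q_0}\subset\dd_{Q_0}\setminus\{Q_0\}$ which has ample contact with $Q_0$, i.e.\ \eqref{eq:ample-sawtooth:lemma} holds with some uniform $K_1$, and for which
\begin{equation*}
\mut_\alpha(\dd_{\F_{Q_0},Q_0})=\sum_{Q\in\dd_{\F_{Q_0},Q_0}\cap\B}\sigma(Q)\le M_1\,\sigma(Q_0)
\end{equation*}
with $M_1$ uniform in $Q_0$; then Lemma~\ref{lemma:HN-Car} yields \eqref{desired:Car}. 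So fix $Q_0\in\dd$.

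The family $\F_{Q_0}$ will come from the stopping-time Lemma~\ref{lemma:stop-time}. Let $\mu$ be $\omega_L^{X_{Q_0}}$ restricted to $Q_0$, normalized by the factor $\sigma(Q_0)/\omega_L^{X_{Q_0}}(Q_0)$. Hypothesis~\ref{hyp2} --- equivalent, as recalled after it, to $\omega_L\in A_\infty(\pom)$, i.e.\ to the $RH_q$ bound \eqref{eq1.wRH} --- together with Lemma~\ref{Bourgainhm} and the doubling Lemma~\ref{lemma.double} supplies the normalization \eqref{eq:mu-Ainf-norm} on $Q_0$ with $K_0,\theta$ controlled by $q,C$ in \eqref{eq:higher-inte} and the $1$-sided CAD character. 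Lemma~\ref{lemma:stop-time} then produces $\F_{Q_0}$, pairwise disjoint, with ample contact, such that $\omega_L^{X_{Q_0}}(Q)\approx\sigma(Q)\,\omega_L^{X_{Q_0}}(Q_0)/\sigma(Q_0)$ for all $Q\in\dd_{\F_{Q_0},Q_0}$. Feeding this into the Caffarelli--Fabes--Mortola--Salsa estimate (Lemma~\ref{lemma2.cfms}), the Green function bounds (Lemma~\ref{lemma2.green}), the doubling Lemma~\ref{lemma.double}, and the interior Caccioppoli/Harnack estimates --- valid since $A\in\Lip_{\rm loc}(\Omega)$ by Hypothesis~\ref{hyp1}(a), which also gives $G_L(X_{Q_0},\,\cdot\,)\in W^{2,2}_{\rm loc}(\Omega\setminus\{X_{Q_0}\})$ --- one shows that the normalized Green function
\begin{equation*}
\widetilde G:=\frac{\sigma(Q_0)}{\omega_L^{X_{Q_0}}(Q_0)}\,G_L(X_{Q_0},\,\cdot\,)
\end{equation*}
behaves like the distance function on the sawtooth: $\widetilde G(X)\approx\delta(X)$ and $|\nabla\widetilde G(X)|\approx1$ for $X\in\Omega_{\F_{Q_0},Q_0}$, with uniform implicit constants (the finitely many top scales, where $X_{Q_0}$ is not far from the cube in question, being dealt with separately). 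In the general case one also brings in the transpose data $\omega_{L^\top}^{X_{Q_0}}$ and $G_{L^\top}$; this is where the hypothesis $\omega_{L^\top}\in A_\infty$ is used.

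The crux is the Carleson measure estimate for the Hessian of $\widetilde G$ over the sawtooth --- precisely Proposition~\ref{prop:CME-G} in the symmetric case and Proposition~\ref{prop:CME-G:ns} in general --- of the form
\begin{equation*}
\iint_{\Omega_{\F_{Q_0},Q_0}}\big|\nabla^2\widetilde G(X)\big|^2\,\delta(X)\,dX\ \lesssim\ \sigma(Q_0).
\end{equation*}
I would prove it by an integration by parts based on $\div(A\nabla\widetilde G)=0$: the leading term is the good one and is retained; the error terms that are first order in $\widetilde G$ carry a factor $\nabla A$ and are absorbed through Hypothesis~\ref{hyp1}(b)--(c) --- recall (b)+(c) imply the square Carleson bound \eqref{car-A-square}, which is what makes the absorption work --- while the terms on $\partial\Omega_{\F_{Q_0},Q_0}$ are controlled using the comparability of $\omega_L^{X_{Q_0}}$ and $\sigma$ on $\dd_{\F_{Q_0},Q_0}$ obtained above together with Lemma~\ref{lemma2.cfms}; in the non-symmetric case the integration by parts is carried out against $G_{L^\top}$ as well, and the resulting cross terms are the ones requiring $\omega_{L^\top}\in A_\infty$. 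Granting this estimate, the packing bound follows. Let $Q\in\dd_{\F_{Q_0},Q_0}\cap\B$. Since $\pom$ is AR and $\Omega$ is $1$-sided NTA, a sufficiently flat piece of $\pom$ admits an exterior Corkscrew point of comparable size; hence the failure of the $c_0$-exterior Corkscrew condition at $Q$ forces $\pom$ to be quantitatively non-flat near $\Delta_Q$ at scale $\ell(Q)$. Because $\widetilde G\approx\delta$, $|\nabla\widetilde G|\approx1$ on the Whitney region $U_Q$ (and $U_Q\subset\Omega_{\F_{Q_0},Q_0}$ since $Q\in\dd_{\F_{Q_0},Q_0}$), and $\widetilde G$ solves the equation, the unit normal $\nabla\widetilde G/|\nabla\widetilde G|$ of the level sets of $\widetilde G$ must oscillate by a fixed amount across $U_Q$ (enlarged by a bounded factor if need be); combined with the Poincar\'e inequality of Lemma~\ref{poincare}, applied componentwise to $\nabla\widetilde G$, and with $\delta\approx\ell(Q)$ on $U_Q$, this gives
\begin{equation*}
\sigma(Q)\ \approx\ \ell(Q)^n\ \lesssim\ \iint_{U_Q}\big|\nabla^2\widetilde G(X)\big|^2\,\delta(X)\,dX,
\end{equation*}
provided $c_0$ was taken small enough. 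Summing over $Q\in\dd_{\F_{Q_0},Q_0}\cap\B$ and invoking the bounded overlap of the family $\{U_Q\}_{Q\in\dd}$ together with the Carleson estimate, we obtain $\sum_{Q\in\dd_{\F_{Q_0},Q_0}\cap\B}\sigma(Q)\lesssim\sigma(Q_0)$, the required bound with a uniform $M_1$; Lemma~\ref{lemma:HN-Car} then finishes the proof.

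The main obstacle is the Carleson estimate itself --- Propositions~\ref{prop:CME-G} and \ref{prop:CME-G:ns}. Executing the integration by parts rigorously requires justifying the manipulations near $\pom$ and near the pole, keeping careful track of the boundary terms on $\partial\Omega_{\F_{Q_0},Q_0}$ and estimating them through the good-sawtooth comparison of $\omega_L$ with $\sigma$, and --- most delicately --- handling the first-order error terms coming from the variable, non-necessarily symmetric coefficients: one cannot afford to differentiate the equation a second time and must keep $A$ inside the divergence and use the \emph{sharp} (Carleson) form of Hypothesis~\ref{hyp1}(b)--(c), which is exactly why the non-symmetric case is genuinely harder than the symmetric one and why the $A_\infty$ hypothesis is needed for both $L$ and $L^\top$. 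A secondary, but still delicate, point is the passage from ``the $c_0$-exterior Corkscrew condition fails at $Q$'' to the scale-invariant lower bound $\iint_{U_Q}|\nabla^2\widetilde G|^2\delta\gtrsim\sigma(Q)$, uniformly in $Q$ and in the scale --- including the bookkeeping that keeps the charging regions inside the sawtooth and boundedly overlapping.
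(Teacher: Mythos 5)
Your high-level architecture matches the paper's: invoke Lemma~\ref{lemma:HN-Car} to localize, use Lemma~\ref{lemma:stop-time} (driven by the $A_\infty$ hypothesis) to produce a sawtooth where $\omega\approx\sigma$ and the normalized Green function $\G$ is comparable to $\delta$, and then reduce to a Carleson measure estimate for the Hessian of $\G$ on the sawtooth, proved by integration by parts. You also correctly flag that the non-symmetric case requires a second stopping time for $\omega_{L^\top}$ and integration against $G_{L^\top}$. All of this agrees with the paper in spirit.

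However, the step that actually exploits $Q\in\B$ --- the lower bound $\sigma(Q)\lesssim\iint_{U_Q}|\nabla^2\G|^2\,\delta\,dX$ --- is where your argument has a genuine gap, and your proposed mechanism is incorrect. You assert that failure of the $c_0$-exterior Corkscrew condition at $Q$ ``forces $\pom$ to be quantitatively non-flat near $\Delta_Q$ at scale $\ell(Q)$.'' This is false. Consider $\Omega=\ree\setminus\overline{\{0\le t\le\eta\}}$ for small $\eta>0$: this is a $1$-sided CAD, its boundary consists of two parallel hyperplanes (each perfectly flat, and the boundary is Hausdorff-close to a single hyperplane), yet the exterior Corkscrew fails badly near the slab. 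What failure of the $c_0$-exterior Corkscrew condition actually gives is a \emph{volume} statement, namely $|\Omega_{\rm ext}\cap B|\lesssim c_0\,r^{n+1}$ (this is \cite[Lemma 5.7]{HM-URHM}), not a flatness statement about $\pom$. Consequently your proposed bridge --- that the unit normal of the level sets of $\G$ ``must oscillate by a fixed amount across $U_Q$'' because $\pom$ is non-flat --- lacks a starting point, and you supply no proof of the oscillation claim in any case.

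The paper's argument at this step is different and this is the crux of the proof. Starting from the Riesz formula \eqref{eq2.14}, one writes $\sigma(Q)\approx\omega(Q)\approx\int\Phi\,d\omega=-\iint_\Omega A^\top\nabla\G\cdot\nabla\Phi$ for a bump $\Phi$ adapted to a ball $B$ centered at a point of $\Delta_Q$, and then subtracts the constant vector $\vec\beta$ (the average of $A^\top\nabla\G$ over $U_{Q,\epsilon}$). Because $\Phi$ has compact support, $\iint_{\ree}\vec\beta\cdot\nabla\Phi=0$, so the constant piece is converted into an integral over $\Omega_{\rm ext}\cap B$, which is controlled by $c_0\,\sigma(Q)$ precisely via the volume bound just mentioned. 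The remaining piece, involving $A^\top\nabla\G-\vec\beta$, is split into the Whitney region $U_{Q,\epsilon}$ (handled by the Poincar\'e inequality of Lemma~\ref{poincare}, producing the desired $\iint_{U_{Q,\epsilon}}|\nabla(A^\top\nabla\G)|^2\delta$ after Cauchy--Schwarz) and a thin strip near $\pom$ (of measure $\lesssim\epsilon\,\sigma(Q)$ and estimated using the normalization $\omega(Q)\approx\sigma(Q)$ via \eqref{est:Caccc+CFMS}). Choosing $c_0$ and $\epsilon$ small, the two error terms are hidden, yielding the lower bound. Note also that the correct integrand is $|\nabla(A^\top\nabla\G)|^2$ (keeping $A$ inside the gradient) rather than $|\nabla^2\G|^2$; the two differ by terms involving $\nabla A$, and the form with $A$ inside is exactly what the Riesz formula produces and what the Carleson estimate of Propositions~\ref{prop:CME-G}--\ref{prop:CME-G:ns} controls. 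Finally, the paper deliberately places the pole at $X_0=X_{\sqrt{M_0}\Delta_{Q_0}}$ (not at $X_{Q_0}$ itself) so that the pole is far from $T_{Q_0}^{**}$ uniformly, avoiding the need to ``deal with the top scales separately'' --- a cleaner fix than the one you gesture at.
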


Assuming this result momentarily, we fix a cube $Q\in\dd(\pom)$, and we seek to show that
$\Omega_{\rm ext}$ has a Corkscrew point relative to $Q$.
Let $\Delta_Q\subset Q$ denote the surface ball defined in \eqref{cube-ball}--\eqref{cube-ball2}.
Take $Q_1$, a sub-cube of $Q$ of maximal size contained in $\Delta_Q$, and observe that $\ell(Q_1)\geq c\ell(Q)$.
We claim that there exists $Q'\in \dd_{Q_1}\setminus \B$ such that $\ell(Q')\ge 2^{-[M_1]}\,\ell(Q_1)$ (here $[M_1]$ is the biggest integer smaller than or equal to $M_1$). Otherwise, by \eqref{desired:Car} (applied to $Q_1$)
$$
([M_1]+1)\,\sigma(Q_1)
=
\sum_{k=0}^{[M_1]} \sum_{\substack{Q\in\dd_{Q_1}\\ \ell(Q)=2^{-k}\,\ell(Q_1)}}\!\! \sigma(Q)
\le
\sum_{Q\in\B:\, Q\subset Q_1} \sigma(Q) \,\leq M_1\,\sigma(Q_1),
$$
which readily leads to a contradiction. Hence there is
$Q'\in \dd_{Q_1}\setminus \B$ such that $\ell(Q')\ge 2^{-[M_1]}\,\ell(Q_1)\ge  c\,2^{-[M_1]}\,\ell(Q)$. Since $Q'$ enjoys the $c_0$-exterior Corkscrew condition,  so does $Q$, but with $c_0$ replaced by $c_0'=c_0\,c\,2^{-[M_1]}$.
On the other hand, every surface ball contains a cube of comparable diameter, this means that there is an exterior Corkscrew point relative to every surface ball on the boundary, and therefore $\Omega$ is NTA, and hence chord-arc. This completes the proof of Theorem \ref{theor:main} modulo Proposition \ref{prop:mu-Carleson}.

\medskip

To prove Proposition \ref{prop:mu-Carleson} we are going to use Lemma \ref{lemma:HN-Car} with $Q^0=\pom$. Fix $Q_{0}\in\dd_{Q^0}=\dd$, an arbitrary dyadic cube, and our goal is to obtain \eqref{eq:Car:sawtooth:lemma} for some pairwise disjoint family $\F_{Q_0}\subset \dd_{Q_0}\setminus \{Q_0\}$ for which \eqref{eq:ample-sawtooth:lemma} holds. We note that it suffices to consider the case $\ell(Q_{0})<\diam(\pom)/M_0$ with $M_0$ large enough (depending only on the allowable parameters). In fact, assuming this, in order to prove the case $\diam(\pom)/M_0\le \ell(Q_0)\lesssim \diam(\pom)$ (of course this is meaningful only if $\diam(\pom)<\infty$), we cover $Q_0$ by disjoint cubes $\{Q_0^k\}_k$ with $\diam(\pom)/(2\,M_0)\le \ell(Q_0^k)<\diam(\pom)/M_0$. For each $Q_0^k$, by the previous case one can find $\F_{Q_0^k} $ so that \eqref{eq:ample-sawtooth:lemma} and \eqref{eq:Car:sawtooth:lemma} hold with $Q_0^k$ in place of $Q_0$. Note that if we set $\F_{Q_0}=\cup_k \F_{Q_0^k}$ we automatically have \eqref{eq:ample-sawtooth:lemma} and moreover
\begin{multline*}
\mut_\alpha(\dd_{\F_{Q_0},Q_0})
\le
\sum_{k}\mut_\alpha(\dd_{\F_{Q_0^k},Q_0^k})
+
\sum_{\substack{Q\in \dd_{Q_0}\cap\B
\\
\ell(Q)\ge \diam(\pom)/M_0
}} \sigma(Q)
\\
\le
M_1\,\sum_k \sigma(Q_0^k)
+
C_{M_0}\,\sigma(Q_0)
\le
(M_1+C_{M_0})\,\sigma(Q_0).
\end{multline*}
Thus, we have proved that for every $Q_0\in \dd$, \eqref{eq:Car:sawtooth:lemma} holds for some pairwise disjoint family $\F\subset \dd_{Q_0}\setminus \{Q_0\}$ satisfying \eqref{eq:ample-sawtooth:lemma}. Hence Lemma \ref{lemma:HN-Car} yields \eqref{desired:Car} with some constant $M_1'$ and hence the proof of Proposition \ref{prop:mu-Carleson} would be complete.

\medskip

In view of the previous observation we fix $Q_0\in \dd$ such that $\ell(Q_{0})<\diam(\pom)/M_0$. We choose $M_0$ so that if we set $X_{0}=X_{\sqrt{M_0}\Delta_{Q_{0}}}$ one has that $2\,\kappa_0\,r_{Q_{0}}\le \delta(X_{0})\le\sqrt{M_0}\,r_{Q_{0}}$, where we recall that $\kappa_0$ was chosen (depending only on the allowable parameters) so that \eqref{tent-Q} holds. In such a case, $\dist(X_0, T_{Q_0}^{**})\ge \kappa_0\,r_{Q_{0}}$, hence the pole $X_0$ will be away from where the argument takes place. By Lemma \ref{Bourgainhm} and Harnack's inequality there is $C_0\ge 1$ depending on the allowable parameters and $M_0$ such that $\omega_L^{X_{0}}(Q_{0})\ge C_0^{-1}$. We now normalize the elliptic measure and the Green function as follows
\begin{equation}
\omega:=C_0\,\sigma(Q_{0})\,\omega_L^{X_{0}}
\qquad\mbox{ and }
\qquad
\G(\cdot):=C_0\,\sigma(Q_{0})\,G_L(X_0,\,\cdot\,).
\label{eq:normalization}
\end{equation}
Note that away from $X_0$, $L^\top \G(\cdot)=C_0\,\sigma(Q_{0})\,L^\top G_L(X_0,\,\cdot\,)=C_0\,\sigma(Q_{0})\, L^\top G_{L^\top}(\cdot, X_0)=0$ 
(see Lemma \ref{lemma2.green}).
Moreover
by our choice of $X_0$, Lemmas \ref{lemma2.cfms} and \ref{lemma.double}, \eqref{cube-ball}, and \eqref{cube-ball2} it follows that
\begin{equation}
\frac{\G(X_Q)}{\ell(Q)}
\approx
\frac{\omega(Q)}{\sigma(Q)},
\qquad \forall\,Q\in\dd_{Q_0}
.
\label{eq:CFMS-above:ns}
\end{equation}
On the other hand, since $\omega_L^{X_0}(\pom)\le 1$,
\begin{equation}\label{hm-sgima-top}
1\le\frac{\omega(Q_{0})}{\sigma(Q_{0})}\le C_0.
\end{equation}
By assumption, $\omega\ll\sigma$, and if $k=d\omega/d\sigma$ denotes the normalized Poisson kernel it follows that, for $M_0$ is large enough, \eqref{eq:higher-inte}, \eqref{cube-ball}, and \eqref{cube-ball2} yield
$$
\left(\aver{Q_0} k(y)^q\,d\sigma(y)\right)^{\frac1q}\le C^{\frac1q}\,C_0=:K_0,
$$
where $C$ is the constant in \eqref{eq:higher-inte}. As a consequence of that, \eqref{hm-sgima-top} and  H\"older's inequality one can derive
\begin{equation}
\frac{\omega(F)}{\sigma(Q_{0})}
=
\aver{Q_0} 1_F(y)\,k(y)\,d\sigma(y)
\le
K_0\,\left(\frac{\sigma(F)}{\sigma(Q_{0})}\right)^{\frac1{q'}},
\qquad\forall\,F\subset Q_{0}.
\label{eq:omega-sigma-Ainfty-Q0}
\end{equation}
Hence we can apply Lemma \ref{lemma:stop-time} to $\mu=\omega$ and obtain a pairwise disjoint family $\F_{Q_0}=\{Q_j\}_j\subset\dd_{Q_0}\setminus \{Q_0\}$ verifying \eqref{eq:ample-saw} and \eqref{eq:saw-mu}. Thus, as observed before
(see Lemma \ref{lemma:HN-Car}) we wish to find $M_1$ independent of $Q_0$ such that
\begin{equation}\label{reduction-sawtooth}
\mut_\alpha(\dd_{\F_{Q_0},Q_0})\le M_1\, \sigma(Q_0).
\end{equation}
Hence, in what follows, $Q_0\in\dd$ and $\F_{Q_0}$ is a pairwise disjoint family $\F_{Q_0}=\{Q_j\}_j\subset\dd_{Q_0}\setminus \{Q_0\}$ verifying \eqref{eq:ample-saw} and
\begin{equation}
\frac12\le \frac{\omega(Q)}{\sigma(Q)}\le K_0\,K_1,
\qquad
\forall\,Q\in\dd_{\F_{Q_0},Q_0},
\label{eq:hm-sigma}
\end{equation}
which is \eqref{eq:saw-mu} with $\mu=\omega$ (see \eqref{eq:normalization}).

Let us now fix $Q\in\dd_{\F_{Q_0},Q_0}\cap \B$ and a point $z_Q\in \Delta_Q\subset Q$. Set $B:=B(z_Q,r/4)$,
with $r:=r_Q\approx \ell(Q)$, and $\Delta:=B\cap \pom$.
Take $\Phi\in C^\infty_0(B)$, with $0\leq\Phi\leq 1$, $\Phi\equiv 1$ on $\frac12B$, and $\|\nabla\Phi\|_\infty \lesssim r^{-1}$.
Let $0<\epsilon\ll 1$ to be chosen and set $\vec{\beta}:= |U_{Q,\epsilon}|^{-1}\iint_{U_{Q,\epsilon}} A^\top(Y)\, \nabla \G(Y)\,dY$. Recall that (see Section \ref{ss:grid}) $U_{Q,\epsilon}=\Omega_{\F_0(\epsilon \,\ell(Q)),Q}$ where $\F_0=\emptyset$ and hence $\F_0(\epsilon \,\ell(Q))$ is the collection of $Q'\in \dd$ such that $\epsilon\,\ell(Q)/2<\ell(Q')\le \epsilon\,\ell(Q)$. In particular, $Q\in \dd_{\F_0(\epsilon \,\ell(Q)),Q}$ and  \eqref{tent-Q} yields $
\interior(U_{Q})
\subset U_{Q,\epsilon}\subset
T_Q
\subset B_Q^*$. Notice that $\W_Q^*\neq\emptyset$ and hence there is $I\in\W_Q^*$ such that $I\subset \interior(U_{Q})$ with $\ell(Q)\approx \ell(I)$
and consequently $|U_{Q,\epsilon}|\approx \ell(Q)^{n+1}$. Keeping in mind the normalization \eqref{eq:normalization}, our choice of $X_0$ and \eqref{eq2.green4}, we have that $L^\top \G=0$ in the weak sense in $T_{Q_0}^{**}$. Thus,
 Caccioppoli's inequality, Harnack's inequality and \eqref{eq:CFMS-above:ns} yield that for every $I\in \W_{Q'}^*$, $Q'\in\dd_Q\subset \dd_{Q_0}$
\begin{align}\label{est:Caccc+CFMS}
\iint_{I^*} |\nabla \G(Y)|\,dY
\lesssim
|I| \frac{\G(X(I))}{\delta(X(I))}
\approx
\ell(Q')^n\,\G(X_{Q'})
\approx
\ell(Q')\,\omega(Q'),
\end{align}
and hence
\begin{multline}\label{est:beta}
|\vec{\beta}\,|
\lesssim
\ell(Q)^{-(n+1)}\iint_{T_Q} |\nabla \G(Y)|\,dY
\lesssim
\ell(Q)^{-(n+1)}\sum_{Q'\in\dd_Q} \sum_{I\in \W_{Q'}^*} \iint_{I^*} |\nabla \G(Y)|\,dY
\\
\lesssim
\ell(Q)^{-n}\sum_{k=0}^\infty 2^{-k}\sum_{\substack{Q'\in\dd_Q\\ \ell(Q')=2^{-k}\,\ell(Q)}} \omega(Q')
\lesssim
\frac{\omega(Q)}{\sigma(Q)}
\lesssim
1
,
\end{multline}
where we have used that $\W_{Q'}^*$ has uniformly bounded cardinality and the last estimate follows from \eqref{eq:hm-sigma} since $Q\in\dd_{\F_{Q_0},Q_0}$.

We next use Lemma \ref{lemma.double},  \eqref{eq2.14}, and \eqref{eq2.green4} (keeping in mind \eqref{eq:normalization}, \eqref{eq:hm-sigma} and moving slightly the pole $X_0$ if needed)
\begin{align}\label{omega-Q}
\sigma(Q)
&\approx
\omega(Q)
\approx
\int_{\pom}\Phi\,d\omega
=
-\iint_{\Omega} A^\top(X)\,\nabla \G(X)\cdot \nabla \Phi(X)\,dX
\\ \nonumber
&=
-\iint_{\Omega} \big(A^\top(X)\,\nabla \G(X)\,-\vec{\beta}\,\big)\cdot\nabla \Phi(X)\,dX
-
\iint_{\ree} \vec{\beta}\cdot\nabla \Phi(X)\,dX
+
\iint_{\Omega_{\rm ext}} \vec{\beta}\cdot\nabla \Phi(X)\,dX
\\ \nonumber
&=
-\iint_{\Omega} \big(A^\top(X)\,\nabla \G(X)\,-\vec{\beta}\,\big)\cdot\nabla \Phi(X)\,dX
+
\iint_{\Omega_{\rm ext}} \vec{\beta}\cdot\nabla \Phi(X)\,dX
\\ \nonumber
&=:-\I+\I\I.
\end{align}

We first estimate $\I\I$. By \cite[Lemma 5.7]{HM-URHM}, the failure of the $c_0$-exterior Corkscrew property
implies that $|\Omega_{\rm ext}\cap B|\lesssim c_0\,r^{n+1}$. This and  \eqref{est:beta} give
\begin{equation}\label{est-II}
|\I\I|
\lesssim
|\vec{\beta}\,|\,r^{-1}\,|\Omega_{\rm ext}\cap B|
\lesssim
\,c_0\, r^{n}
\approx
c_0\,\sigma(Q).
\end{equation}
To estimate $\I$ we proceed as follows.
\begin{align}\label{est-I}
|\I|
&\lesssim
r^{-1}\iint_{\Omega\cap B} \big|A^\top(X)\,\nabla \G(X)\,-\vec{\beta}\,\big|\,dX
\\ \nonumber
&\le
r^{-1}\Big( \iint_{U_{Q,\epsilon}}  \big|A^\top(X)\,\nabla \G(X)\,-\vec{\beta}\,\big|\,dX + \iint_{(\Omega\setminus U_{Q,\epsilon})\cap B}  \big|A^\top(X)\,\nabla \G(X)\,-\vec{\beta}\,\big|\,dX\Big)
\\ \nonumber
&=:
r^{-1}\,(\I_1+\I_2).\nonumber
\end{align}
For $\I_1$ we use H\"older's inequality, our choice of $\vec\beta$, Lemma \ref{poincare} and the fact that $\delta(X)\approx_\epsilon\ell(Q)$ for every $X\in U_{Q,\epsilon}$:
\begin{multline}\label{est-I1}
\I_1
\lesssim
\ell(Q)^{\frac{n+1}2}\,\Big( \iint_{U_{Q,\epsilon}} \big|A^\top(X)\,\nabla \G(X)\,-\vec{\beta}\,\big|^2\,dX \Big)^{\frac12}
\le
C_\epsilon\,
\ell(Q)^{\frac{n+3}2}\,\Big( \iint_{U_{Q,\epsilon}} \big|\nabla (A^\top\,\nabla \G)(X)|^2\,dX \Big)^{\frac12}
\\
\le
C_\epsilon\,
r\,\sigma(Q)^{\frac12}\,\Big( \iint_{U_{Q,\epsilon}} \big|\nabla (A^\top\,\nabla \G)(X)|^2\,\delta(X)\,dX \Big)^{\frac12}
=:
C_\epsilon\,
r\,\sigma(Q)^{\frac12}\,\Upsilon_{Q,\epsilon}^{\frac12}.
\end{multline}

Before estimating $\I_2$ we need to make the following observation. Let $I\in \W$ be such that
$I^*\cap B\neq\emptyset$ and pick $Y\in I^*\cap B$. In particular,
$$
4\diam(I)\le\dist(I,\pom)\le|Y-z_Q|<\frac{r}{4}\le\frac{\ell(Q)}{4}< \frac{\diam(\pom)}{4 M_0}.
$$
Recall the construction of $Q_I^*$, the unique dyadic cube satisfying that $z\in Q_I^*$ and $\ell(Q_I^*)=\ell(I)$, where $z\in \pom$ is such that
$\dist(I,\pom)=\dist(I,z)$. We claim that $Q_I^*\subset Q$. To show this let us take $Z\in I$ such that $\dist(I,\pom)=|Z-z|$. Then
$$
|z-x_Q|
\le
|z-Z|+|Z-Y|+|Y-z_Q|+|z_Q-x_Q|
\le
\dist(I,\pom)+\diam(I^*)+\frac{r}{4}+r
<2r.
$$
This implies that $z\in \Delta(x_Q,2r)\subset Q$ (cf. \eqref{cube-ball}) and since $\ell(Q_I^*) =\ell(I)<\ell(Q)$ it follows that $Q_I^*\subset Q$ by the dyadic properties.

We are now ready to estimate $\I_2$. We first see that by choice of $B$ we have that $B\cap \Omega\subset T_Q$. Indeed, let $Y\in B\cap \Omega$ and take $I\in \W$ so that $Y\in I$. Note that by the previous observation $Q_I^*\subset Q$.   Note also that, as mentioned above, our construction guarantees that $I\in\W_{Q_I^*}^*$. All these yield $Y\in I\subset\interior(I^*)\subset \interior(U_{Q_I^*})\subset T_Q$.

Once we have shown that  $B\cap \Omega\subset T_Q$ one can easily see that $(\Omega\setminus U_{Q,\epsilon})\cap B\subset T_Q\setminus U_{Q,\epsilon}\subset \Sigma_\epsilon:=\big\{X\in\Omega: \delta(X)\lesssim\epsilon\,\ell(Q)\big\}$. Therefore, if $\epsilon$ is small enough,
\begin{align}\label{est-I2}
\I_2
\lesssim
|\vec{\beta}\,|\,|B\cap\Sigma_\epsilon|
+
\iint_{B\cap\Sigma_\epsilon}|\nabla \G|\,dX
\lesssim
\epsilon\,\ell(Q) r^n+\I_3
\approx
r\,\epsilon\,\sigma(Q)+\I_3,
\end{align}
where we have used \eqref{est:beta} and \cite[Lemma 5.3]{HM-URHM}. To estimate $\I_3$, we use again the cubes $Q_I^*$ as above associated with $I\in \W$ with $I^*\cap B\neq\emptyset$. As already mentioned in such a scenario, $Q_I^*\subset Q$ and $I\in\W_{Q_I^*}^*$. We can then invoke \eqref{est:Caccc+CFMS} to obtain
\begin{multline*}
\I_3
\le
\sum_{\substack{I\in \W: I^*\cap B\neq\tinyemptyset\\ \ell(I)\lesssim \epsilon\,\ell(Q)}}
\iint_{I^*} |\nabla \G(X)|\,dX
\lesssim
\sum_{\substack{I\in \W: I^*\cap B\neq\tinyemptyset\\ \ell(I)\lesssim \epsilon\,\ell(Q)}}  \omega(Q_I^*)\, \ell(Q_I^*)
=
\sum_{k: 2^{-k}\lesssim \epsilon\,\ell(Q)}2^{-k}
\sum_{\substack{I\in \W: I^*\cap B\neq\tinyemptyset\\ \ell(I)=2^{-k}}}  \omega(Q_I^*).
\end{multline*}
Notice that for $k$ fixed, the family $\{Q_I^*\}_{I\in\W: \ell(I)=2^{-k}}$ has bounded overlap, hence
\begin{align}\label{est-I3}
\I_3
\lesssim
\omega(Q)\sum_{k: 2^{-k}\lesssim \epsilon\,\ell(Q)}2^{-k}
\lesssim
\epsilon\,\ell(Q)\,\omega(Q)
\lesssim
\epsilon\,r\,\sigma(Q),
\end{align}
where the last estimate follows again from \eqref{eq:hm-sigma} since $Q\in\dd_{\F_{Q_0},Q_0}$.
Plugging \eqref{est-II}, \eqref{est-I}, \eqref{est-I1}, \eqref{est-I2}, and \eqref{est-I3} into \eqref{omega-Q}
we conclude that
$$
\sigma(Q)
\lesssim
c_0\,\sigma(Q)
+C_\epsilon\,
\sigma(Q)^{\frac12}\,\Upsilon_{Q,\epsilon}^{\frac12}+
\epsilon\,\sigma(Q).
$$
If $\epsilon$ and $c_0$ are taken small enough (we may assume for later use that $\epsilon=2^{-N_\epsilon}$ for some $N_\epsilon\in \N$ large enough) the first and third term in the right hand side can be hidden and one easily arrives at
\begin{multline}
\sigma(Q)
\lesssim_{\epsilon,c_0}
\Upsilon_{Q,\epsilon}
=
\iint_{U_{Q,\epsilon}} \big|\nabla (A^\top\,\nabla \G)(X)|^2\,\delta(X)\,dX
\\
\le
\sum_{\substack{Q'\in\dd_Q \\ \epsilon\,\ell(Q)<\ell(Q')\le\ell(Q)}} \iint_{U_{Q'}} \big|\nabla (A^\top\,\nabla \G)(X)|^2\,\delta(X)\,dX
=:\sum_{\substack{Q'\in\dd_Q \\ \epsilon\,\ell(Q)<\ell(Q')\le\ell(Q)}}  \Upsilon_{Q'}.
\label{eq:main-sigma-Q:bad-saw}
\end{multline}
Hence, it has been shown that for a choice of $\epsilon$ and $c_0$ small enough, the previous estimate holds for all $Q\in\dd_{\F_{Q_0},Q_0}\cap \B(c_0)$. This in turn gives
\begin{equation}\label{reduction:sawtooth}
\mut_\alpha(\dd_{\F_{Q_0},Q_0})
=
\sum_{Q\in\dd_{\F_{Q_0},Q_0}\cap \B(c_0)} \sigma(Q)
\lesssim_{\epsilon,c_0}
\sum_{Q\in\dd_{\F_{Q_0},Q_0}}
\sum_{\substack{Q'\in\dd_Q \\ \epsilon\,\ell(Q)<\ell(Q')\le\ell(Q)}}  \Upsilon_{Q'}
\lesssim_\epsilon
\sum_{Q\in\dd_{\F_{Q_0}^\epsilon ,Q_0}}
\Upsilon_{Q},
\end{equation}
where $\F_{Q_0}^\epsilon$ is the pairwise disjoint family of $N_{\epsilon}$-descendants (recall that $\epsilon=2^{-N_{\epsilon}}$) of the elements of $\F_{Q_0}$:
$$
\F_{Q_0}^\epsilon
:=
\bigcup_{Q\in \F_{Q_0}} \big\{
Q'\in\dd_{Q}:  \ell(Q')=2^{-N_{\epsilon}}\,\ell(Q)=\epsilon \,\ell(Q)
\big\}.
$$
We next claim that
\begin{equation}\label{homogen-newsawtooth}
\frac{\omega(Q)}{\sigma(Q)} \approx_\epsilon 1,
\qquad
\forall\,Q\in \dd_{\F_{Q_0}^\epsilon,Q_0},
\end{equation}
that is, both estimates in \eqref{eq:hm-sigma} can be transmitted from $\dd_{\F_{Q_0},Q_0}$ to $\dd_{\F_{Q_0}^\epsilon,Q_0}$, albeit with bounds that may depend on $\epsilon$. To obtain that, fix $Q\in \dd_{\F_{Q_0}^\epsilon,Q_0}$. By \eqref{eq:hm-sigma}, we may assume that $Q\notin \dd_{\F_{Q_0},Q_0}$. This means that there is $Q_1\in \F_{Q_0}\subset\dd_{Q_0}$ such that $Q\subset Q_1$. Since $Q_1$ splits into its $N_\epsilon$-descendants, we can find $Q_1'\in\F_{Q_0}^\epsilon$ such that $Q_1'\cap Q\neq\emptyset$ and $\ell(Q_1')=\epsilon\,\ell(Q_1)$. In turn, since $Q\in\dd_{\F_{Q_0}^\epsilon,Q_0}$, it then follows that $Q_1'\subsetneq Q\subset Q_1$. We use this, the AR property, the doubling property of $\omega$ (Lemma \ref{lemma.double})  and \eqref{eq:hm-sigma}
(which holds for the dyadic parent of $Q_1$, denoted by $\widehat{Q}_1$, since $Q_1 \in \F_{Q_0}\subset\dd_{Q_0}\setminus\{Q_0\}$)
$$
\frac{\omega(Q)}{\sigma(Q)}
\approx_\epsilon
\frac{\omega(\widehat{Q}_1)}{\sigma(\widehat{Q}_1)}
\approx
1.
$$
This shows our claim.

Set $\widetilde{\alpha}:=\{\widetilde{\alpha}_Q\}_{Q\in \dd_{Q_0}}$ with
\begin{equation}\label{eq2.5:ns}
\widetilde{\alpha}_Q := \left\{
\begin{array}{ll}
\Upsilon_{Q},&
\ {\rm if \ } Q \in \dd_{\F_{Q_0}^\epsilon,Q_0},
\\[6pt]
0,&\
{\rm otherwise},
\end{array}
\right.
\end{equation}
and associate to $\widetilde{\alpha}$  the discrete measure $\mut_{\widetilde{\alpha}}$ as in \eqref{eq:mut-defi}. Then,  we may immediately see that \eqref{reduction-sawtooth} follows from \eqref{reduction:sawtooth} and
\begin{equation}
\mut_{\widetilde{\alpha}}(\dd_{Q_0})\lesssim_\epsilon\sigma(Q_0).
\label{eq:reduction:sawtooth:new}
\end{equation}

Our goal is then to obtain \eqref{eq:reduction:sawtooth:new} and in order to do that we shall distinguish between two cases depending whether or not $A$ is symmetric. The main idea is that when $A$ is symmetric, in the expression $\Upsilon_Q$ we can replace $\delta(X)$ by $\G(X)$ for every $X\in U_Q$ and for every $Q \in \dd_{\F_{Q_0}^\epsilon,Q_0}$. Doing this, 
\eqref{eq:reduction:sawtooth:new} will be obtained by an integration by parts argument. In the non-symmetric case, for the integration by parts to work, we would need to do the same thing but rather than $\G$ (which is essentially $G_L(X_0,\cdot)$, hence a null solution for $L^\top$, cf. \eqref{eq:normalization}), we would need to work with essentially $G_{L^\top}(X_0,\cdot)$, hence a null solution for $L$. The latter would require to perform the stopping time in Lemma \ref{lemma:stop-time} with $\omega_{L^\top}^{X_0}$. However, it is not clear that one can apply Lemma \ref{lemma:stop-time} simultaneously to $\omega_{L}^{X_0}$ and $\omega_{L^\top}^{X_0}$ and obtain a family of cubes whose complement is still ample. We are going to overcome this by another use of Lemma \ref{lemma:HN-Car}, hence we will work in $\widetilde{Q}_0\in\dd_{Q_0}$ and apply  Lemma \ref{lemma:stop-time} to $\omega_{L}^{\widetilde{X}_0}$ with $\widetilde{X}_0$ being effectively a corkscrew point relative to $\widetilde{Q}_0$.

\subsection{The symmetric case}\label{section-sym}
In this section we assume that $A$ is symmetric. We start observing that for every $Q\in \dd_{\F_{Q_0}^\epsilon,Q_0}$ and every $X\in U_Q$ we have that $\delta(X)\approx_\epsilon \G(X)$ for every $X\in U_Q$ in view of Harnack's inequality, \eqref{eq:CFMS-above:ns} and \eqref{homogen-newsawtooth}. This and the definitions of the sets $\{U_{Q}\}_{Q\in\dd_{Q_0}}$ and $\Omega^\ast_{\F^\epsilon_{Q_0},Q_0}$ (see Section \ref{ss:grid}) yield
\begin{multline}\label{eqn:new-goal-symm}
\mut_{\widetilde{\alpha}}(\dd_{Q_0})
=
\sum_{Q\in \dd_{\F_{Q_0}^\epsilon,Q_0}}\Upsilon_Q
\lesssim_\epsilon
\sum_{Q\in \dd_{\F_{Q_0}^\epsilon,Q_0}}
\iint_{U_Q} \big|\nabla (A\,\nabla \G)(X)|^2\,\G(X)\,dX
\\
\lesssim
\iint_{\Omega_{\F_{Q_0}^\epsilon,Q_0}^*} \big|\nabla (A\,\nabla \G)(X)|^2\,\G(X)\,dX.
\end{multline}
We next take an arbitrary $N$ large enough and define $\F_N:=\F_{Q_0}^\epsilon\big(2^{-N}\,\ell(Q_0)\big)$ as in
Section \ref{ss:grid}. That is, $\F_N\subset\dd_{Q_0}$ is the family of maximal cubes of the collection $\F_{Q_0}^\epsilon$ augmented by adding all dyadic cubes of size smaller than or equal than $2^{-N}\,\ell(Q_0)$. In particular, $Q\in\dd_{\F_N, Q_0}$ if and only if $Q\in \dd_{\F_{Q_0}^\epsilon,Q_0}$ and $\ell(Q)>2^{-N}\,\ell(Q_0)$. Clearly, $\dd_{\F_N, Q_0}\subset \dd_{\F_{N'}, Q_0}$ if $N\le N'$ and therefore $\Omega_{\F_{N},Q_0}^*\subset \Omega_{\F_{N'},Q_0}^*\subset \Omega_{\dd_{\F_{Q_0}^\epsilon},Q_0}^*$. This and the monotone convergence theorem give that
\begin{equation}
\iint_{\Omega_{\F_{Q_0}^\epsilon,Q_0}^*} \big|\nabla (A\,\nabla \G)(X)|^2\,\G(X)\,dX
=
\lim_{N\to\infty} \iint_{\Omega_{\F_{N},Q_0}^*} \big|\nabla (A\,\nabla \G)(X)|^2\,\G(X)\,dX.
\label{eq:Omega_N-TCM}
\end{equation}

We now formulate an auxiliary result that will easily lead us to the desired estimate.
We note that the
following proposition was previously announced, with a sketch of the proof, in \cite{ABHM}.  In the sequel, we shall
present the full details,
and treat also the non-symmetric case (see Proposition \ref{prop:CME-G:ns} below).

\begin{proposition}\label{prop:CME-G}
Assuming that $A$ is symmetric and given $C_1\ge 1$, one can find $C$ depending on $C_1$ and the allowable parameters such that if $\F_N\subset \dd_{Q_0}$, $N\ge 1$, is a family of pairwise disjoint dyadic cubes satisfying
\begin{equation}\label{goal:N:relevant}
C_1^{-1}
\le
\frac{\omega(Q)}{\sigma(Q)} \le C_1
\qquad\mbox{and}\qquad
\ell(Q)>2^{-N}\,\ell(Q_0),
\qquad \forall\,Q\in \dd_{\F_{N},Q_0},
\end{equation}
then
\begin{equation}\label{eqn-goal:N}
\iint_{\Omega_{\F_{N},Q_0}^*} \big|\nabla (A\,\nabla \G)(X)|^2\,\G(X)\,dX
\le
C \, \sigma(Q_0).
\end{equation}
\end{proposition}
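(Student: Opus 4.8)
The plan is to deduce \eqref{eqn-goal:N} from the single weighted second--order Carleson estimate
\begin{equation*}
\iint_{\Omega_{\F_N,Q_0}^*}|\nabla^2\G(X)|^2\,\G(X)\,dX\le C\,\sigma(Q_0),
\end{equation*}
which in turn will follow from integrating by parts twice. Since $A\in\Lip_{\rm loc}(\Omega)$, uniform ellipticity yields the pointwise bound $|\nabla(A\,\nabla\G)|^2\lesssim|\nabla A|^2\,|\nabla\G|^2+|\nabla^2\G|^2$ a.e.\ (with constant depending only on $\Lambda$), so it suffices to control $\iint_{\Omega_{\F_N,Q_0}^*}|\nabla A|^2|\nabla\G|^2\G\,dX$ and the Hessian term separately. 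For the first term: on the (fattened) Whitney regions $U_{Q'}^*$ with $Q'\in\dd_{\F_N,Q_0}$, Caccioppoli's and Harnack's inequalities together with \eqref{eq:CFMS-above:ns} and the hypothesis \eqref{goal:N:relevant} give $\G(X)\approx_{C_1}\delta(X)\approx\ell(Q')$ and $|\nabla\G(X)|\lesssim_{C_1}1$ there, whence $|\nabla A|^2|\nabla\G|^2\G\lesssim_{C_1}|\nabla A|^2\delta$ on $\Omega_{\F_N,Q_0}^*$; since $\Omega_{\F_N,Q_0}^*\subset B_{Q_0}^*=\kappa_0 B_{Q_0}\cap\overline\Omega$ by \eqref{tent-Q} and $\ell(Q_0)<\diam(\pom)/M_0$, the $L^2$ Carleson estimate \eqref{car-A-square} applied on the surface ball $\kappa_0\Delta_{Q_0}$ bounds this integral by $C\sigma(Q_0)$.

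For the Hessian estimate, fix $\varphi\in C_0^\infty(\Omega\setminus\{X_0\})$ with $\mathbf 1_{\Omega_{\F_N,Q_0}^*}\le\varphi\le\mathbf 1_{\Omega_{\F_N,Q_0}^{**}}$ and $|\nabla\varphi|\lesssim\ell(I)^{-1}$ on each Whitney box $I$ it meets. This is possible because, $N$ being finite, every cube entering $\Omega_{\F_N,Q_0}^{**}$ has side length $\gtrsim_\epsilon 2^{-N}\ell(Q_0)>0$, so $\overline{\Omega_{\F_N,Q_0}^{**}}$ is a compact subset of $\Omega$, and the pole $X_0$ lies far from $B_{Q_0}$, hence off $\overline{\supp\varphi}$. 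On $\overline{\supp\varphi}$ the function $\G=C_0\,\sigma(Q_0)\,G_L(X_0,\cdot)$ is a weak solution of $\div(A\,\nabla\G)=0$ with Lipschitz coefficients --- here we use that $A$ is symmetric, so that $\G$ is also a null solution of $L=L^\top$, cf.\ \eqref{eq2.green4}, \eqref{eq:normalization} --- hence $\G\in W^{2,2}_{\rm loc}\cap C^{1,\alpha}_{\rm loc}$ there; moreover, differentiating the equation yields, for each $l$, the distributional identity $\div(A\,\nabla\partial_l\G)=-\div((\partial_l A)\,\nabla\G)$. By ellipticity $|\nabla^2\G|^2\le\Lambda\sum_l A\nabla(\partial_l\G)\cdot\nabla(\partial_l\G)$, and since $\varphi\ge\mathbf 1_{\Omega_{\F_N,Q_0}^*}$ it suffices to estimate $\sum_l J_l$, where $J_l:=\iint A\nabla(\partial_l\G)\cdot\nabla(\partial_l\G)\,\G\varphi\,dX\ge0$. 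Integrating by parts in $J_l$ against $\partial_l\G\,\G\varphi$, substituting the derived equation, and integrating by parts once more --- no boundary terms arise, as $\varphi$ is compactly supported in $\Omega$ --- one finds that $\sum_l J_l$ equals minus the sum of five terms: (I) $\sum_l\iint(\partial_l A)\nabla\G\cdot\nabla\partial_l\G\,\G\varphi$; (II) $\sum_l\iint(\partial_l A)\nabla\G\cdot\nabla\G\,\partial_l\G\,\varphi$; (III) $\sum_l\iint(\partial_l A)\nabla\G\cdot\nabla\varphi\,\partial_l\G\,\G$; (IV) $\sum_l\iint A\nabla\partial_l\G\cdot\nabla\G\,\partial_l\G\,\varphi$; and (V) $\sum_l\iint A\nabla\partial_l\G\cdot\nabla\varphi\,\partial_l\G\,\G$.

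These are disposed of as follows. Term (I) is $\lesssim\iint|\nabla A|\,|\nabla\G|\,|\nabla^2\G|\,\G\varphi$, and a Cauchy--Schwarz splitting absorbs a small multiple of $\iint|\nabla^2\G|^2\G\varphi$ (hence of $\sum_l J_l$) into the left side, leaving $\lesssim_{C_1}\iint_{\Omega_{\F_N,Q_0}^*}|\nabla A|^2|\nabla\G|^2\G\,dX\lesssim_{C_1}\sigma(Q_0)$ as above. Term (II) is $\lesssim\iint|\nabla A|\,|\nabla\G|^3\,\varphi\lesssim_{C_1}\iint_{\Omega_{\F_N,Q_0}^{**}}|\nabla A|\,dX$, and here one crucially invokes the \emph{$L^1$} Carleson condition \eqref{car-A-new} (rather than only its $L^2$ consequence \eqref{car-A-square}) on $\kappa_0\Delta_{Q_0}$, which bounds this by $C\sigma(Q_0)$. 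Terms (III) and (V) are supported on the ``collar'' $\supp\nabla\varphi\subset\Omega_{\F_N,Q_0}^{**}\setminus\Omega_{\F_N,Q_0}^{*}$, where on each relevant Whitney box $I$ one has $\G\approx_{C_1}\ell(I)$, $|\nabla\varphi|\lesssim\ell(I)^{-1}$, $|\nabla\G|\lesssim_{C_1}1$ and $\fint_I|\nabla^2\G|^2\lesssim_{C_1}\ell(I)^{-2}$ (standard interior estimates, using also hypothesis (b) in Hypothesis \ref{hyp1}); a box--by--box Cauchy--Schwarz then bounds each of (III), (V) by $\lesssim_{C_1}\sum_I\ell(I)^n$, the sum over the Whitney boxes meeting $\partial\Omega_{\F_N,Q_0}^*$, which is $\lesssim\sigma(Q_0)$ \emph{uniformly in $N$ and $\F_N$} by the standard packing property of sawtooth regions (see \cite{HM-URHM}); for (III) one additionally uses \eqref{car-A-new}. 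Finally, term (IV) is the delicate one: estimated crudely it would contribute $\lesssim_{C_1}\iint_{\Omega_{\F_N,Q_0}^{**}}|\nabla^2\G|\,dX$, which grows like $N$ as $N\to\infty$. Instead we first sum in $l$, using $\sum_l(\partial_l\G)\,\nabla(\partial_l\G)=\tfrac12\nabla(|\nabla\G|^2)$, so that (since $A=A^\top$) term (IV) equals $\tfrac12\iint\varphi\,A\nabla(|\nabla\G|^2)\cdot\nabla\G\,dX$; integrating by parts once more and using again the homogeneous equation $\div(A\nabla\G)=0$, this becomes $-\tfrac12\iint|\nabla\G|^2\,A\nabla\G\cdot\nabla\varphi\,dX$, a collar term, $\lesssim_{C_1}\sum_I\ell(I)^n\lesssim\sigma(Q_0)$. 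Collecting everything and absorbing yields the Hessian estimate, hence \eqref{eqn-goal:N}, with $C$ depending only on $C_1$ and the allowable parameters (in particular independent of $N$); letting $N\to\infty$ through \eqref{eq:Omega_N-TCM} then gives \eqref{eq:reduction:sawtooth:new} in the symmetric case.

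The main obstacle is precisely term (IV), i.e.\ making the double integration by parts close without a divergent-in-$N$ contribution: the density $|\nabla^2\G|^2\,\G$ is genuinely not pointwise small on $\Omega_{\F_N,Q_0}^*$ and its integral telescopes, so every leftover term must be arranged to be either absorbable, controlled by the Carleson character of $\nabla A$ through \eqref{car-A-new}--\eqref{car-A-square}, or concentrated on the collar (where the uniform sawtooth packing $\sum_I\ell(I)^n\lesssim\sigma(Q_0)$ saves it); the second use of $\div(A\nabla\G)=0$, after summing in $l$, is exactly what forces (IV) into the last category. A subsidiary technical matter --- the justification of the integrations by parts and of the identity $\div(A\nabla\partial_l\G)=-\div((\partial_l A)\nabla\G)$ --- is routine once one knows that $\overline{\supp\varphi}$ is a compact subset of $\Omega\setminus\{X_0\}$ and $\G\in W^{2,2}_{\rm loc}$ there.
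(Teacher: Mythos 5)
Your proof is correct and takes essentially the same approach as the paper's. The paper proves Proposition~\ref{prop:CME-G} by a one-line reduction to the non-symmetric Proposition~\ref{prop:CME-G:ns}, and your argument is precisely that proof specialized to the symmetric case $\G=\G_\top$: the same splitting $|\nabla(A\nabla\G)|^2\lesssim|\nabla A|^2|\nabla\G|^2+|\nabla^2\G|^2$, the same cut-off (your $\varphi$ is the paper's $\Psi_N$ from Lemma~\ref{lemma:approx-saw}), and after integration by parts the same five-term decomposition, with your (I)--(III) matching $\widetilde{\I}_{21}$--$\widetilde{\I}_{23}$ and (IV), (V) matching $\tfrac12\widetilde{\I\I}_1$, $\tfrac12\widetilde{\I\I}_2$, estimated in the same way (Young/absorption for (I), the $L^1$ Carleson condition \eqref{car-A-new} for (II), collar packing plus the second-order Caccioppoli of Lemma~\ref{lemma:est-nabla:II} for (III) and (V), and the second integration by parts against the null equation for $\G$ to reduce (IV) to a collar term). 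The only cosmetic deviation is that you sum over $l$ before the final integration by parts in (IV); this is harmless but unnecessary, since the identity $\partial_l\G\,\nabla\partial_l\G=\tfrac12\nabla((\partial_l\G)^2)$ already lets one run the same argument for each fixed $l$ (as the paper does with a single generic $\partial$), and the cancellation there uses only $L^\top\G=0$, so symmetry of $A$ is not actually required at that step.
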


Assuming this result momentarily we see that \eqref{homogen-newsawtooth} and the construction of $\F_N$
give \eqref{goal:N:relevant}. Thus  \eqref{eq:reduction:sawtooth:new} follows from \eqref{eqn:new-goal-symm}, \eqref{eq:Omega_N-TCM}, and \eqref{eqn-goal:N}. This completes the proof of Proposition \ref{prop:mu-Carleson} when $A$ is symmetric,  modulo obtaining the just stated proposition.

\begin{proof}[Proof of Proposition \ref{prop:CME-G} for the Laplacian]
The case when $L$ is the Laplacian (that is, $A$ is the identity matrix and hence $(a)$, $(b)$, and $(c)$ of Hypothesis \ref{hyp1} are trivial) is rather simple and models the general case.    To fix ideas, we first present this simple case.

Suppose for now that $L$ is the Laplacian.   We first observe that
\begin{equation}\label{est:G:saw-Lap}
\delta(X)|\nabla ^2\G(X)|,\
|\nabla \G(X)|\lesssim \frac{\G(X)}{\delta(X)}
\approx
1,
\qquad
\forall\, X\in \overline{\Omega_{\F_{N},Q_0}^{*}}
=:
\overline{\Omega_\star}
,
\end{equation}
albeit with bounds that depend on $C_1$ and the allowable parameters but which are uniform in $N$.
To see the ``$\lesssim$'' we use the harmonicity of $\G$, $\nabla \G$ and $\nabla^2 \G$; interior estimates, and Caccioppoli's inequality (we recall that we chose $X_0$ so that it is away from $T_{Q_0}^{*}$ and also that  $2^{-N}\,\ell(Q_0)\lesssim \delta(X)\lesssim \ell(Q_0)$ for every $X\in \overline{\Omega_\star}$). The proof ``$\approx$'' is as follows: given $X\in \overline{\Omega_\star} $, there is $Q\in \dd_{\F_{N},Q_0}$ and $I\in \W_Q^*$ such that $X\in I^{**}$.
Note that $\delta(X)\approx\delta(X_Q)\approx\ell(I)\approx\ell(Q)$ and $|X-X_Q|\lesssim \ell(Q)$. This, Harnack's inequality, \eqref{eq:CFMS-above:ns},  and \eqref{goal:N:relevant} yield as
desired
$$
\frac{\G(X)}{\delta(X)}
\approx
\frac{\G(X_{Q})}{\ell(Q)}
\approx
\frac{\omega(Q)}{\sigma(Q)}
\approx
1.
$$

We now proceed to obtain \eqref{eqn-goal:N} with $A$ being the identity matrix. Write ``$\partial$'' to denote a fixed generic derivative. We use that $\G$ and $\partial \G$ are harmonic in  $\overline{\Omega_\star}$
to see that
in that set the following pointwise equalities hold
\[
\div\nabla \big((\partial \G)^{2}\big)
=
2\,\div\big[(\partial \G)\,\nabla(\partial \G)\big]=2\,|\nabla(\partial \G)|^{2}
\]
 and
\[
\left[\div\nabla\big((\partial \G)^{2}\big)\right]\,\G
=
\div\left[\nabla\big((\partial\G)^{2}\big)\,\G\right] -\nabla\big((\partial\G)^{2}\big)\cdot\nabla \G
=
\div\left[\nabla \big((\partial\G)^{2}\big)\,\G -(\partial\G)^{2}\nabla \G\right].
\]
Note that $\Omega_\star$ is a finite union of fattened Whitney boxes, thus, its (outward) unit normal $\nu$ is well defined a.e.~on $\partial\Omega^\star$. Hence the divergence theorem can be applied to obtain
\begin{multline}
\label{intbypartsomegstar}
2\,\iint\limits_{\Omega^{\star}} |\nabla(\partial \G)(X)|^{2}\,\G(X)\,dX
=
\int\limits_{\partial\Omega^{\star}}\big(\nabla\big( (\partial\G)^{2}\big)\,\G-(\partial\G)^{2}\,\nabla \G\big)\cdot\nu\,
dH^n
\\
\lesssim
\int\limits_{\partial\Omega^{\star}} \big( |\nabla^2 \G |\,|\nabla \G|\,\G+|\nabla \G|^3\big)\,dH^n
\lesssim
H^n(\partial \Omega^{\star})
\lesssim
\ell(Q_0)^n
\approx
\sigma(Q_0),
\end{multline}
where we have used \eqref{est:G:saw-Lap}, that $\partial \Omega_{\star}$ is AR (cf. \cite[Lemma 3.61]{HM-URHM})
  and finally that $\diam(\pom_{\star})\approx\ell(Q_0)$ (note that all bounds are independent of $N$). From \eqref{intbypartsomegstar}, we immediately obtain  \eqref{eqn-goal:N} in the case of the Laplacian.
\end{proof}

Looking at the previous argument the matrix $A$ being non-constant (for
both the symmetric and non-symmetric cases) raises several issues. The first one appears in \eqref{est:G:saw-Lap}: the ``$\approx$'' is still correct but one does not expect to have the ``$\lesssim$'' for general matrices $A$ since, as opposed to the constant coefficient case, we no longer have that $\partial \G$ is a null solution of $L$. As we shall see below in Lemma \ref{eqn:point-nablaA:general}, under the assumption that $A$ satisfies $(b)$ of Hypothesis \ref{hyp1}, one can prove that
the estimate for $\nabla \G$ in \eqref{est:G:saw-Lap} holds pointwise and the estimate for $\nabla^2\G$
holds in a  $L^2$-average sense via a Caccioppoli type estimate for second derivatives of solutions. The second issue is that the presence of $A$ in \eqref{eqn-goal:N} makes the algebra significantly more difficult as one has to distribute derivatives and some of them hit $A$. Finally, because the estimates for $\nabla^2\G$ hold in an average sense, we cannot integrate by parts as in \eqref{intbypartsomegstar}. We will solve this by producing some wiggling after incorporating a smooth cut-off of the domain (see Lemma \ref{lemma:approx-saw}) which will have the effect of replacing integrals on the boundary by ``solid'' integrals in a ``strip'' along the boundary.

\begin{proof}[Proof of Proposition \ref{prop:CME-G}]
We just need to invoke Proposition \ref{prop:CME-G:ns} below with $Q_0=\widetilde{Q}_0$
since \eqref{goal:N:relevant:ns} follows at once from \eqref{goal:N:relevant}. Further details are left to the interested reader.
\end{proof}

\subsection{The non-symmetric case}\label{section-non-sym}
As explained above in the non-symmetric case we are going to need to use again Lemma \ref{lemma:HN-Car}. Recall that our goal is to show \eqref{eq:reduction:sawtooth:new}. Applying Lemma \ref{lemma:HN-Car} with $Q^0=Q_0$, it suffices to take an arbitrary $\widetilde{Q}_0\in\dd_{Q_0}$ and show that there exists
a pairwise disjoint family $\widetilde{\F}_{\widetilde{Q}_0}=\{\widetilde{Q}_j\}_{j}\subset \dd_{\widetilde{Q}_0}\setminus \{\widetilde{Q}_0\}$ such that
\begin{equation}\label{eq:Q_1-saw}
\sigma\Big(\widetilde{Q}_0\setminus\bigcup_{\widetilde{Q}_j\in \widetilde{\F}_{\widetilde{Q}_0}}\widetilde{Q}_j\Big)\ge \widetilde{K}_1^{-1} \sigma(\widetilde{Q}_0)
\qquad\mbox{and}\qquad
\mut_{\widetilde{\alpha}}(\dd_{\widetilde{\F}_{\widetilde{Q}_0},\widetilde{Q}_0})\le \widetilde{M}_1\sigma(\widetilde{Q}_0).
\end{equation}
In order to obtain this we note that $\ell(\widetilde{Q}_0)\le \ell(Q_0)<\diam(\pom)/M_0$ and set $\widetilde{X}_{0}=X_{\sqrt{M_0}\Delta_{\widetilde{Q}_0}}$. Recall that by choice of $M_0$ we have that $2\,\kappa_0\,r_{\widetilde{Q}_0}\le \delta(\widetilde{X}_0)\le\sqrt{M_0}\,r_{\widetilde{Q}_0}$, where $\kappa_0$ was chosen (depending only on the allowable parameters) so that \eqref{tent-Q} holds. In such a case, $\dist(\widetilde{X}_0, T_{\widetilde{Q}_0}^{**})\ge \kappa_0\,r_{\widetilde{Q}_0}$, hence the pole $\widetilde{X}_0$ will be away from where the argument takes place. By  applying  Lemma \ref{Bourgainhm} and Harnack's inequality we have $\omega_{L^\top}^{\widetilde{X}_0}(\widetilde{Q}_0)\ge \widetilde{C}_0^{-1}$ with $\widetilde{C}_0$ depending on the allowable parameters and $M_0$. We now take a normalization of the elliptic measure and the Green function for $L^\top$:
\begin{equation}
\omega_\top:=\widetilde{C}_0 \,\sigma(\widetilde{Q}_0)\,\omega_{L^\top}^{\widetilde{X}_0}
\qquad\mbox{ and }
\qquad
\G_\top(\cdot):=\widetilde{C}_0\,\sigma(\widetilde{Q}_0)\,G_{L^\top}( \widetilde{X}_0, \,\cdot\,, ).
\label{eq:normalization-ns:top}
\end{equation}
As before $L \G_\top=0$ away from $\widetilde{X}_0$, and by our choice of $\widetilde{X}_0$, Lemmas \ref{lemma2.cfms} and \ref{lemma.double}, \eqref{cube-ball}, and \eqref{cube-ball2} it follows that
\begin{equation}
\frac{\G_\top(X_Q)}{\ell(Q)}
\approx
\frac{\omega_\top(Q)}{\sigma(Q)},
\qquad \forall\,Q\in\dd_{\widetilde{Q}_0}
.
\label{eq:CFMS-above:ns:top}
\end{equation}

Since $\omega_{L^\top}^{\widetilde{X}_0}(\pom)\le 1$, it follows that $1\le\omega_\top(\widetilde{Q}_{0})/\sigma(\widetilde{Q}_{0})\le \widetilde{C}_0$. This and the fact that $\omega_{L^\top}$ (and hence $\omega_\top$) is in  $A_\infty(\pom)$  allow us to invoke much as before Lemma \ref{lemma:stop-time} with $\mu=\omega_\top$ to extract a family of pairwise disjoint cubes $\widetilde{\F}_{\widetilde{Q}_0}=\{\widetilde{Q}_j\}_{j}\subset \dd_{\widetilde{Q}_0}\setminus \{\widetilde{Q}_0\}$ such that
\begin{equation}\label{eq:Q_1-saw-stop}
\sigma\Big(\widetilde{Q}_0\setminus\bigcup_{\widetilde{Q}_j\in \widetilde{\F}_{\widetilde{Q}_0}}\widetilde{Q}_j\Big)\gtrsim \sigma(\widetilde{Q}_0)
\qquad\mbox{and}\qquad
\frac{\omega_\top(Q)}{\sigma(Q)}\approx 1,\quad\forall Q\in\dd_{\widetilde{\F}_{\widetilde{Q}_0},\widetilde{Q}_0},
\end{equation}
with implicit constants depending on $\widetilde{C}_0$ and the $A_\infty(\pom)$ character of $\omega_{L^\top}$. Consequently, in view of the previous considerations and Lemma \ref{lemma:HN-Car}, it remains to show  $\mut_{\widetilde{\alpha}}(\dd_{\widetilde{\F}_{\widetilde{Q}_0},\widetilde{Q}_0})\le \widetilde{M}_1\sigma(\widetilde{Q}_0)$.

Note first that if $\widetilde{Q}_0\not\in\dd_{\F_{Q_0}^\epsilon,Q_0}$ then $\widetilde{\alpha}_Q=0$ for every $Q\in\dd_{\widetilde{Q}_0}$, hence the desired estimate follows trivially since $\mut_{\widetilde{\alpha}}(\dd_{\widetilde{\F}_{\widetilde{Q}_0},\widetilde{Q}_0})=0$. Thus we may assume that $\widetilde{Q}_0\in\dd_{\F_{Q_0}^\epsilon,Q_0}$.
Write $\F_\star$ to denote the collection of maximal cubes in $(\F_{Q_0}^\epsilon\cap \dd_{\widetilde{Q}_0})\cup \widetilde{\F}_{\widetilde{Q}_0}$ so that
$\dd_{\F_{Q_0}^\epsilon,Q_0}\cap \dd_{\widetilde{\F}_{\widetilde{Q}_0},\widetilde{Q}_0}=\dd_{\F_\star,\widetilde{Q}_0}$. Hence,
\begin{multline}\label{reduction:sawtooth:ns}
\mut_{\widetilde{\alpha}}(\dd_{\widetilde{\F}_{\widetilde{Q}_0},\widetilde{Q}_0})
=
\sum_{Q\in \dd_{\widetilde{\F}_{\widetilde{Q}_0},\widetilde{Q}_0}}\widetilde{\alpha}_Q
=
\sum_{Q\in \dd_{\F_{Q_0}^\epsilon,Q_0}\cap \dd_{\widetilde{\F}_{\widetilde{Q}_0},\widetilde{Q}_0}} \Upsilon_{Q}
\\
=
\sum_{Q\in \dd_{\F_\star,\widetilde{Q}_0}} \iint_{U_{Q}} \big|\nabla (A^\top\,\nabla \G)(X)|^2\,\delta(X)\,dX
\lesssim
\iint_{\Omega_{\F_{\star},\widetilde{Q}_0}^*} \big|\nabla (A^\top\,\nabla \G)(X)|^2\,\G_\top(X)\,dX.
\end{multline}
Note that in the last estimate we have used that the sets $\{U_{Q}\}_{Q\in\dd_{\widetilde{Q}_0}}$ have bounded overlap and that Harnack's inequality,
\eqref{eq:CFMS-above:ns:top} and the second estimate in \eqref{eq:Q_1-saw-stop}  yield
$$
\frac{\G_\top(X)}{\delta(X)}
\approx
\frac{\G_\top(X_Q)}{\ell(Q)}
\approx
\frac{\omega_\top(Q)}{\sigma(Q)}\approx 1,
\qquad\forall\,X\in U_{Q},\quad
\forall\,Q\in\dd_{\widetilde{\F}_{\widetilde{Q}_0},\widetilde{Q}_0}.
$$
As in the symmetric case we take $N$ large enough and define $\F_N:=\F_{\star}\big(2^{-N}\,\ell(\widetilde{Q}_0)\big)$ as in
Section \ref{ss:grid} so that $Q\in\dd_{\F_N, \widetilde{Q}_0}$ if and only if $Q\in \dd_{\F_{\star},\widetilde{Q}_0}$ and $\ell(Q)>2^{-N}\,\ell(\widetilde{Q}_0)$. Clearly, $\dd_{\F_N, \widetilde{Q}_0}\subset \dd_{\F_{N'}, \widetilde{Q}_0}$ if $N\le N'$ and therefore $\Omega_{\F_{N},\widetilde{Q}_0}^*\subset \Omega_{\F_{N'},\widetilde{Q}_0}^*\subset \Omega_{\dd_{\F_{\star}},\widetilde{Q}_0}^*$. This and the monotone convergence theorem give that
\begin{equation}
\iint_{\Omega_{\F_{\star},\widetilde{Q}_0}^*} \big|\nabla (A^\top\,\nabla \G)(X)|^2\,\G_\top(X)\,dX
=
\lim_{N\to\infty} \iint_{\Omega_{\F_{N},\widetilde{Q}_0}^*} \big|\nabla (A^\top\,\nabla \G)(X)|^2\,\G_\top(X)\,dX.
\label{eq:Omega_N-TCM:ns}
\end{equation}
We can now state the analog of Proposition \ref{prop:CME-G} in this non-symmetric case.

\begin{proposition}\label{prop:CME-G:ns}
Given $C_1\ge 1$, one can find $C$ depending on $C_1$ and the allowable parameters such that if $\F_N\subset \dd_{\widetilde{Q}_0}$, $N\ge 1$, is a family of pairwise disjoint dyadic cubes satisfying
\begin{equation}\label{goal:N:relevant:ns}
C_1^{-1}
\le
\frac{\omega(Q)}{\sigma(Q)} \le C_1,
\qquad
C_1^{-1}
\le
\frac{\omega_\top(Q)}{\sigma(Q)} \le C_1,
\qquad\mbox{and}\qquad
\ell(Q)>2^{-N}\,\ell(\widetilde{Q}_0),
\end{equation}
for all $Q\in \dd_{\F_{N},\widetilde{Q}_0}$, then
\begin{equation}\label{eqn-goal:N:ns}
\iint_{\Omega_{\F_{N},\widetilde{Q}_0}^*} \big|\nabla (A^\top\,\nabla \G)(X)|^2\,\G_\top(X)\,dX
\le
C \, \sigma(\widetilde{Q}_0).
\end{equation}
\end{proposition}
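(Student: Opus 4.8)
The strategy is to prove \eqref{eqn-goal:N:ns} by an integration by parts argument in the spirit of the Laplacian computation \eqref{intbypartsomegstar}, but with the boundary of the sawtooth region replaced by a smooth cut-off, so that the only information needed on second derivatives of $\G$ is an $L^2$-averaged Caccioppoli bound. Write $\Omega_\star:=\Omega^*_{\F_N,\widetilde{Q}_0}$ and let $\Omega_\star^{**}$ be a slight fattening of it (passing from $I^*$ to a larger dilate of each Whitney box); both are finite unions of fattened Whitney boxes contained in $T_{\widetilde{Q}_0}^{**}$, and by the choice of the poles $X_0,\widetilde{X}_0$ they sit at distance $\gtrsim\ell(\widetilde{Q}_0)$ from those poles. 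Hence $\div(A^\top\nabla\G)=0$ and $\div(A\nabla\G_\top)=0$ in the weak sense in a neighborhood of $\overline{\Omega_\star^{**}}$ (recall \eqref{eq2.green4} and the choice of $X_0,\widetilde{X}_0$), and since $A\in\Lip_{\rm loc}(\Omega)$, interior estimates give $\G,\G_\top\in C^1\cap W^{2,2}$ there, with $w_\ell:=\partial_\ell\G$ a weak solution of $\div(A^\top\nabla w_\ell)=-\div\big((\partial_\ell A^\top)\nabla\G\big)$. The first step is to record, \emph{uniformly in $N$}, the pointwise bounds $|\nabla\G|+|\nabla\G_\top|\lesssim1$ and $\G\approx\delta\approx\G_\top$ on $\overline{\Omega_\star^{**}}$ --- which follow from Hypothesis~\ref{hyp1}(b), Lemma~\ref{eqn:point-nablaA:general}, Harnack's inequality, \eqref{eq:CFMS-above:ns}, \eqref{eq:CFMS-above:ns:top} and \eqref{goal:N:relevant:ns} --- as well as the Caccioppoli-type estimate $\iint_{I^*}|\nabla^2\G|^2\,dX\lesssim\ell(I)^{n-1}+\iint_{I^{**}}|\nabla A|^2\,dX$ for each Whitney box $I$ of $\Omega_\star$ (and the same for $\G_\top$), again from Lemma~\ref{eqn:point-nablaA:general}. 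Since $|\nabla(A^\top\nabla\G)|^2\lesssim|\nabla A|^2|\nabla\G|^2+|\nabla^2\G|^2$ and $\G_\top\approx\delta$, the contribution $\iint_{\Omega_\star}|\nabla A|^2|\nabla\G|^2\G_\top\lesssim\iint_{\Omega_\star}|\nabla A|^2\delta$ is already $\lesssim\sigma(\widetilde{Q}_0)$ by \eqref{car-A-square} (implied by Hypothesis~\ref{hyp1}(b),(c)), so one is left to bound $\sum_\ell\iint_{\Omega_\star}|\nabla w_\ell|^2\,\G_\top\,dX$.

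The next step is to bring in, through Lemma~\ref{lemma:approx-saw}, a cut-off $\Psi_N\in C^\infty_c(\Omega)$ with $\mathbf 1_{\Omega_\star}\le\Psi_N\le\mathbf 1_{\Omega_\star^{**}}$, $|\nabla\Psi_N|\lesssim\delta^{-1}$, and $\supp\nabla\Psi_N$ contained in a boundedly-many-layers ``strip'' $\mathcal S_N$ of Whitney boxes abutting $\partial\Omega_\star$. Using ellipticity ($A^\top\xi\cdot\xi=A\xi\cdot\xi\ge\Lambda^{-1}|\xi|^2$), $\Psi_N\ge\mathbf 1_{\Omega_\star}$, the identity $A^\top\nabla w_\ell\cdot\nabla w_\ell=\div(w_\ell A^\top\nabla w_\ell)-w_\ell\div(A^\top\nabla w_\ell)$, and the equation for $w_\ell$,
\begin{multline*}
\iint_{\Omega_\star}|\nabla w_\ell|^2\,\G_\top\,dX
\;\lesssim\;
\iint\big(A^\top\nabla w_\ell\cdot\nabla w_\ell\big)\,\G_\top\,\Psi_N\,dX
\\
=\iint\div\big(w_\ell A^\top\nabla w_\ell\big)\,\G_\top\Psi_N\,dX
\;-\;\iint w_\ell\,\div\big((\partial_\ell A^\top)\nabla\G\big)\,\G_\top\Psi_N\,dX,
\end{multline*}
and then one integrates by parts in each term (no boundary contributions, since $\Psi_N\in C^\infty_c$), and once more in the piece where a $\div(A\nabla\cdot)$ comes to act on $\G_\top$. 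The key algebraic point is the term $-\tfrac12\iint\nabla(w_\ell^2\Psi_N)\cdot A\nabla\G_\top\,dX=\tfrac12\iint w_\ell^2\Psi_N\,\div(A\nabla\G_\top)\,dX+(\text{strip})$, whose first piece \emph{vanishes because $L\G_\top=0$}; this is precisely why, in the non-symmetric case, the stopping-time selection of Lemma~\ref{lemma:stop-time} had to be run with $\omega_{L^\top}^{\widetilde{X}_0}$, so that $\G_\top$ is a null solution of $L$ (and not merely of $L^\top$).

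Expanding all derivatives, the resulting terms are of three harmless types. First, ``bulk'' lower-order terms: $\iint_{\Omega_\star^{**}}|\nabla A|\,|\nabla\G|\,|w_\ell|\,|\nabla\G_\top|\,\Psi_N\,dX\lesssim\iint_{\Omega_\star^{**}}|\nabla A|\,dX\lesssim\sigma(\widetilde{Q}_0)$ by the $L^1$ Carleson condition \eqref{car-A-new}, together with $\iint_{\Omega_\star^{**}}|\nabla A|\,|\nabla\G|\,|\nabla w_\ell|\,\G_\top\Psi_N\,dX$, which by Cauchy--Schwarz is $\le\eta\iint|\nabla w_\ell|^2\G_\top\Psi_N\,dX+C_\eta\iint_{\Omega_\star^{**}}|\nabla A|^2\,\delta\,dX$ --- the first summand absorbed into the left-hand side for $\eta$ small, the second $\lesssim\sigma(\widetilde{Q}_0)$ by \eqref{car-A-square}. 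Second, ``strip'' terms in which no second derivative of $\G$ appears: $\iint_{\mathcal S_N}|w_\ell|^2|\nabla\Psi_N||\nabla\G_\top|\,dX$ and $\iint_{\mathcal S_N}|\nabla A|\,|\nabla\G|\,|w_\ell|\,\G_\top|\nabla\Psi_N|\,dX$; here $|\nabla\G|,|\nabla\G_\top|,|w_\ell|\lesssim1$ and $\G_\top|\nabla\Psi_N|\lesssim\G_\top/\delta\lesssim1$, so these are $\lesssim\iint_{\mathcal S_N}\delta^{-1}\,dX+\iint_{\mathcal S_N}|\nabla A|\,dX\lesssim\sigma(\widetilde{Q}_0)$, using \eqref{car-A-new} and the fact that $\partial\Omega_\star$ is Ahlfors regular with $\iint_{\mathcal S_N}\delta^{-1}\,dX\lesssim\mathcal H^n(\partial\Omega_\star)\approx\sigma(\widetilde{Q}_0)$ uniformly in $N$ (a one-layer strip at Whitney scale $t$ has volume $\approx t\cdot\mathcal H^n(\partial\Omega_\star\cap\{\delta\approx t\})$).

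The one genuinely delicate term --- and where I expect the main work to lie --- is the remaining strip term $\iint_{\mathcal S_N}|w_\ell|\,|\nabla w_\ell|\,|\nabla\Psi_N|\,\G_\top\,dX$, in which $\nabla w_\ell$ is a \emph{second} derivative of $\G$ and is only available in $L^2$-average. Using $|w_\ell|\lesssim1$ and $\G_\top|\nabla\Psi_N|\lesssim1$ it is $\lesssim\iint_{\mathcal S_N}|\nabla w_\ell|\,dX$, and I would apply Cauchy--Schwarz over the strip, $\iint_{\mathcal S_N}|\nabla w_\ell|\,dX\le|\mathcal S_N|^{1/2}\big(\iint_{\mathcal S_N}|\nabla w_\ell|^2\,dX\big)^{1/2}$, and insert the Caccioppoli bound $\iint_{I^*}|\nabla^2\G|^2\lesssim\ell(I)^{n-1}+\iint_{I^{**}}|\nabla A|^2$ summed over the Whitney boxes of $\mathcal S_N$. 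Because $\mathcal S_N$ is only one Whitney layer thick and $\delta\approx\ell(I)$ on it, the powers of the local scale cancel in this product --- this is exactly the point of having replaced the boundary integral of \eqref{intbypartsomegstar} by a \emph{solid} strip integral, i.e.\ the ``wiggling'' afforded by Lemma~\ref{lemma:approx-saw} --- and one is left with $\lesssim\mathcal H^n(\partial\Omega_\star)+\big(\mathcal H^n(\partial\Omega_\star)\,\iint_{\Omega_\star^{**}}|\nabla A|^2\delta\,dX\big)^{1/2}\lesssim\sigma(\widetilde{Q}_0)$, once more by \eqref{car-A-square} and with a constant independent of $N$. The main obstacle is thus not any single estimate but organizing the two integrations by parts so that every resulting term is absorbed, or controlled by one of $\iint_{\Omega_\star^{**}}|\nabla A|\,dX$, $\iint_{\Omega_\star^{**}}|\nabla A|^2\delta\,dX$, or $\iint_{\mathcal S_N}\delta^{-1}\,dX$ --- each $\lesssim\sigma(\widetilde{Q}_0)$ uniformly in $N$; collecting these, choosing $\eta$ small to absorb, and summing over the finitely many indices $\ell$ yields \eqref{eqn-goal:N:ns}.
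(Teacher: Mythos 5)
Your overall approach is the same as the paper's: the same preliminary pointwise bounds $|\nabla\G|,|\nabla\G_\top|\lesssim1$ and $\G\approx\delta\approx\G_\top$ on the fattened sawtooth, the same split into a first--order piece controlled by \eqref{car-A-square} plus a second--order piece, the same cut-off $\Psi_N$ from Lemma~\ref{lemma:approx-saw}, the same two integrations by parts with the vanishing term $\iint w_\ell^2\Psi_N\,\div(A\nabla\G_\top)\,dX=0$ exploiting $L\G_\top=0$, and the same classification of the remaining terms into absorbable/bulk/strip contributions. The only genuine deviation from the paper is in the ``delicate'' strip term, and it is there that your argument has a gap.

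You estimate the strip term by a \emph{global} Cauchy--Schwarz,
$\iint_{\mathcal S_N}|\nabla w_\ell|\,dX\le|\mathcal S_N|^{1/2}\big(\iint_{\mathcal S_N}|\nabla w_\ell|^2\,dX\big)^{1/2}$,
and then claim ``the powers of the local scale cancel.'' That cancellation occurs only if the strip is essentially single-scale. Writing $|\mathcal S_N|\approx\sum_{I\in\W_N^\Sigma}\ell(I)^{n+1}$ and, by Caccioppoli plus $|\nabla\G|\lesssim1$, $\iint_{\mathcal S_N}|\nabla w_\ell|^2\lesssim\sum_{I\in\W_N^\Sigma}\ell(I)^{n-1}$, the product is
$\bigl(\sum_I\ell(I)^{n+1}\bigr)^{1/2}\bigl(\sum_I\ell(I)^{n-1}\bigr)^{1/2}$,
which by Cauchy--Schwarz is $\ge\sum_I\ell(I)^n$ and is \emph{not} bounded by $\sigma(\widetilde{Q}_0)$ uniformly in $N$: for a sawtooth whose boundary has $O(1)$ Whitney boxes at the top scale $\ell_0=\ell(\widetilde{Q}_0)$ and $\approx 2^{Nn}$ boxes at the bottom scale $2^{-N}\ell_0$, one gets $\sum\ell^{n+1}\approx\ell_0^{n+1}$ while $\sum\ell^{n-1}\approx 2^{N}\ell_0^{n-1}$, so the product is $\approx 2^{N/2}\ell_0^{n}\to\infty$. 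The correct move, as in the paper's estimate of $\widetilde{\I\I}_2$, is to apply Cauchy--Schwarz \emph{box by box}: for each $I\in\W_N^\Sigma$,
$\iint_{I^{***}}|\nabla^2\G|\lesssim|I|^{1/2}\big(\iint_{I^{***}}|\nabla^2\G|^2\big)^{1/2}\lesssim\ell(I)^{(n+1)/2}\cdot\ell(I)^{(n-1)/2}=\ell(I)^n$,
and then sum, obtaining $\sum_{I\in\W_N^\Sigma}\ell(I)^n\lesssim\sigma(\widetilde{Q}_0)$ by Lemma~\ref{lemma:approx-saw}. With this local form of Cauchy--Schwarz in the delicate term, the rest of your argument goes through and coincides with the paper's proof.
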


Assuming this result momentarily, we note that  \eqref{goal:N:relevant:ns} follows from \eqref{homogen-newsawtooth}, the second item in \eqref{eq:Q_1-saw-stop}
and the construction of $\F_N$. Consequently,  \eqref{eqn-goal:N:ns}, \eqref{reduction:sawtooth:ns}, and \eqref{eq:Omega_N-TCM:ns} allow us to conclude that $\mut_{\widetilde{\alpha}}(\dd_{\widetilde{\F}_{\widetilde{Q}_0},\widetilde{Q}_0})\lesssim  \sigma(\widetilde{Q}_0)$. As observed above, this was the only thing left to obtain \eqref{eq:reduction:sawtooth:new} in the non-symmetric case and the proof of our main result is eventually complete.

Before starting the proof of Proposition \ref{prop:CME-G:ns} we need some auxiliary results, whose proofs are postponed until the next section.

\begin{lemma}\label{lemma:est-nabla:II}
Let $\Omega$ be an open set and let $A$ be a uniformly elliptic matrix in $\Omega$. Given $K\ge 0$, there exists $C_{K}$ depending only on ellipticity and $K$
such that  if
\begin{equation}\label{eqn:point-nablaA:general}
\sup_{X\in \Omega} |\nabla A(X)|\,\delta(X)\le K,
\end{equation}
then the following hold:
\begin{list}{$(\theenumi)$}{\usecounter{enumi}\leftmargin=.8cm
\labelwidth=.8cm\itemsep=0.2cm\topsep=.1cm
\renewcommand{\theenumi}{\roman{enumi}}}
\item For every $ u\in W^{1,2}_{\rm loc}(\Omega)$ , $u\ge 0$, verifying  $Lu=0$ in $\Omega$ in the weak-sense,
\begin{equation}\label{eqn:Caccioppoli:general}
|\nabla u(X)|\le C_K\,\frac{u(X)}{\delta(X)},
\qquad
\forall\,X\in\Omega.
\end{equation}

\item Given any cube $I\subset\ree$, if $6\,I\subset \Omega$ and $u\in W^{1,2}(6I)$
satisfies $Lu=0$ in $6I$ in the weak-sense,
\begin{equation}\label{eqn:Caccioppoli:nabla:general}
\iint_{I} |\nabla^2 u(Y)|^2\,dY
\le
\frac{C_K}{\ell(I)^2}\,\iint_{2\,I} |\nabla u(Y)|^2\,dY.
\end{equation}
\end{list}
\end{lemma}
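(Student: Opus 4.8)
The plan is to treat both statements as purely interior elliptic regularity estimates, the only role of \eqref{eqn:point-nablaA:general} being that it forces $\nabla A\in L^\infty_{\rm loc}(\Omega)$ with the quantitative bound $\|\nabla A\|_{L^\infty(B)}\le K/\dist(B,\pom)$ on every ball $B\subset\subset\Omega$; equivalently, after dilating any ball to unit size the rescaled matrix is uniformly elliptic (ellipticity constant $\Lambda$) and uniformly Lipschitz with constant $\lesssim K$. This is precisely what keeps every constant below dependent only on $\Lambda$, $n$ and $K$, and not on the (possibly degenerating) local Lipschitz character of $A$ near $\pom$. I would prove $(i)$ by such a rescaling together with Harnack's inequality and the interior $C^{1,\alpha}$ estimate, and $(ii)$ by differentiating the equation and running the usual Caccioppoli argument on a first derivative of $u$.

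For $(i)$, fix $X\in\Omega$ and set $r:=\delta(X)$. Working in the connected component of $\Omega$ containing $X$, Harnack's inequality gives that either $u\equiv0$ there, in which case \eqref{eqn:Caccioppoli:general} is trivial, or $u>0$. In the latter case put $v(Y):=u(X+\tfrac r2 Y)/u(X)$ and $\widetilde A(Y):=A(X+\tfrac r2 Y)$ for $Y\in B(0,1)$, so that $-\div(\widetilde A\nabla v)=0$ weakly in $B(0,1)$, $v\ge0$, $v(0)=1$, $\widetilde A$ has ellipticity constant $\Lambda$, and $|\nabla\widetilde A(Y)|=\tfrac r2|(\nabla A)(X+\tfrac r2Y)|\le\tfrac r2\cdot K/\delta(X+\tfrac r2Y)\le K$, since $\delta(X+\tfrac r2Y)\ge r/2$ on $B(0,1)$. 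By Harnack's inequality $\sup_{B(0,3/4)}v\le C v(0)=C$, and by the interior $C^{1,\alpha}$ (De Giorgi--Nash / Schauder) estimate for divergence-form operators with H\"older coefficients, $|\nabla v(0)|\le C\sup_{B(0,3/4)}v\le C$, with $C=C(n,\Lambda,K)$. Undoing the dilation, $|\nabla u(X)|=\tfrac2r|\nabla v(0)|\,u(X)\le C_K\,u(X)/\delta(X)$, which is \eqref{eqn:Caccioppoli:general}.

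For $(ii)$, I would first note that since $A\in W^{1,\infty}(6I)$, Nirenberg's difference-quotient method applied to the weak formulation $\iint A\nabla u\cdot\nabla\Phi=0$ (using test functions $\Phi$ and their translates $\Phi(\cdot-he_k)$, together with the Caccioppoli estimate below applied uniformly in $h$) yields $u\in W^{2,2}_{\rm loc}(6I)$ and shows that, for each $k\in\{1,\dots,n+1\}$, $w:=\partial_k u\in W^{1,2}_{\rm loc}(6I)$ solves weakly $-\div(A\nabla w)=\div\big((\partial_k A)\nabla u\big)$ in $6I$. I would then test this identity against $\varphi^2 w$ with $\varphi\in C_0^\infty(2I)$, $0\le\varphi\le1$, $\varphi\equiv1$ on $I$, $|\nabla\varphi|\lesssim\ell(I)^{-1}$, and use ellipticity together with Young's inequality (absorbing the terms carrying $\iint\varphi^2|\nabla w|^2$) to obtain
\[
\iint_I|\nabla w|^2\,dY\;\lesssim\;\ell(I)^{-2}\iint_{2I}|w|^2\,dY+\iint_{2I}|\nabla A|^2\,|\nabla u|^2\,dY.
\]
Since $6I\subset\Omega$, every $Y\in2I$ satisfies $\delta(Y)\ge2\ell(I)$, hence $|\nabla A(Y)|\le K/(2\ell(I))$ on $2I$ by \eqref{eqn:point-nablaA:general}; combined with $|w|=|\partial_k u|\le|\nabla u|$ this bounds the right-hand side by $C(1+K^2)\ell(I)^{-2}\iint_{2I}|\nabla u|^2$. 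Summing over $k$ and using $|\nabla^2 u|^2=\sum_k|\nabla\partial_k u|^2$ gives \eqref{eqn:Caccioppoli:nabla:general}.

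The arguments are essentially routine; the two points that need care are (a) the bookkeeping that keeps all constants dependent only on $\Lambda$, $n$ and $K$ --- handled in $(i)$ by rescaling before invoking Harnack and the $C^{1,\alpha}$ bound, and in $(ii)$ by converting the pointwise hypothesis \eqref{eqn:point-nablaA:general} into the scale-invariant bound $\|\nabla A\|_{L^\infty(2I)}\lesssim K/\ell(I)$ through $\delta\gtrsim\ell(I)$ on $2I$ --- and (b) the passage to the limit in the difference-quotient identity that upgrades $u$ to $W^{2,2}_{\rm loc}$ and legitimizes the differentiated equation. The latter is standard given $A\in W^{1,\infty}_{\rm loc}(\Omega)$, but it is the one place where I would write out the details in full.
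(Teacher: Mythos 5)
Your argument is correct and is essentially the paper's: for $(i)$ a rescaling to unit scale followed by Harnack plus a pointwise interior gradient bound for divergence-form operators with Lipschitz coefficients (you invoke Schauder $C^{1,\alpha}$; the paper cites Gr\"uter--Widman, Lemma~3.1, which is the same estimate), and for $(ii)$ a Caccioppoli argument on the differentiated equation $-\div(A\nabla\partial_k u)=\div\big((\partial_k A)\nabla u\big)$ after upgrading $u$ to $W^{2,2}_{\rm loc}$ by difference quotients (the paper cites Gilbarg--Trudinger, Theorem~8.8, for the same fact). The only structural difference is cosmetic: the paper first proves a unit-cube version (its Lemma~\ref{lemma:est-nabla:I}) and then transfers it to Whitney cubes by translation and dilation, whereas you carry out the rescaling in place and, in $(ii)$, run the Caccioppoli estimate directly at scale $\ell(I)$ using $\delta\gtrsim\ell(I)$ on $2I$ to convert \eqref{eqn:point-nablaA:general} into $\|\nabla A\|_{L^\infty(2I)}\lesssim K/\ell(I)$.
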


\medskip

\begin{proof}[Proof of Proposition \ref{prop:CME-G:ns}]
Write $\Omega_\star=\Omega_{\F_{N},\widetilde{Q}_0}^{**}$. Let us first prove that
\begin{equation}\label{est:G:saw}
|\nabla \G(X)|\lesssim \frac{\G(X)}{\delta(X)}
\approx
1,\qquad
|\nabla \G_\top(X)|\lesssim \frac{\G_\top(X)}{\delta(X)}
\approx
1,
\qquad
\forall\, X\in \overline{\Omega_\star},
\end{equation}
albeit with bounds that depend on $C_1$ and the allowable parameters but which are uniform in $N$.
First, $|\nabla\G|\lesssim G/\delta$ (respectively $|\nabla\G_\top|\lesssim G_\top/\delta$) follows from Lemma \ref{lemma:est-nabla:II} applied to $u=\G$
(resp. $u=\G_\top$) where the implicit constant depends on ellipticity and $\big \||\nabla A|\,\delta\big\|_\infty$. To justify the use of that lemma we first notice that \eqref{eqn:point-nablaA:general} is just our assumption $(b)$ in Hypothesis \ref{hyp1}. Also, we recall we chose $X_0$ and $\widetilde{X}_0$
so that they are away from $T_{\widetilde{Q}_0}^{**}$ (indeed $X_0$ is away from $T_{Q_0}^{**}\supset T_{\widetilde{Q}_0}^{**}$). Hence $L\G_\top=0$ and $L^\top\G=0$ in the weak sense in $T_{\widetilde{Q}_0}^{**}$ (cf. Lemma \ref{lemma2.green}). Finally we observe that  $2^{-N}\,\ell(\widetilde{Q}_0)\lesssim \delta(X)\lesssim \ell(\widetilde{Q}_0)$ for every $X\in \overline{\Omega_\star}$

To continue with the proof of \eqref{est:G:saw} let $X\in \overline{\Omega_\star}$. Then there is $Q\in \dd_{\F_{N},\widetilde{Q}_0}$ and $I\in \W_Q^*$ such that $X\in I^{***}$.
Note that $\delta(X)\approx\delta(X_Q)\approx\ell(I)\approx\ell(Q)$ and $|X-X_Q|\lesssim \ell(Q)$. This, Harnack's inequality, \eqref{eq:CFMS-above:ns},  \eqref{eq:CFMS-above:ns:top}, that $\widetilde{Q}_0\in\dd_{Q_0}$ and \eqref{goal:N:relevant:ns} yield as
desired
$$
\frac{\G(X)}{\delta(X)}
\approx
\frac{\G(X_{Q})}{\ell(Q)}
\approx
\frac{\omega(Q)}{\sigma(Q)}
\approx
1,
\qquad
\frac{\G_\top(X)}{\delta(X)}
\approx
\frac{\G_\top(X_{Q})}{\ell(Q)}
\approx
\frac{\omega_\top(Q)}{\sigma(Q)}
\approx
1.
$$

We now proceed to obtain \eqref{eqn-goal:N:ns}. We note that by the boundedness of $A$,
\begin{multline*}
\iint_{\Omega_{\F_{N},\widetilde{Q}_0}^*} \big|\nabla (A^\top\,\nabla \G)(X)|^2\,\G_\top(X)\,dX
\\
\lesssim_\Lambda
\iint_{\Omega_{\F_{N},\widetilde{Q}_0}^*} |\nabla A(X)|^2 |\nabla \G(X)|^2\,\G_\top(X)\,dX
\\
+
\iint_{\Omega_{\F_{N},\widetilde{Q}_0}^*} |\nabla^2 \G(X)|^2\,\G_\top(X)\,dX
=:\I+\I\I.
\end{multline*}
The estimate for $\I$ is easy. Use \eqref{est:G:saw} and \eqref{tent-Q} to conclude as desired
$$
\I
\lesssim
\iint_{B_{\widetilde{Q}_0}^*\cap\Omega } |\nabla A(X)|^2 \,\delta(X)\,dX
\lesssim
\big\||\nabla A|\,\delta\big\|_{L^\infty(\Omega)}
\|\nabla A\|_{\C(\Omega)}\sigma(\Delta_{\widetilde{Q}_0}^*)
\approx
\big\||\nabla A|\,\delta\big\|_{L^\infty(\Omega)}
\|\nabla A\|_{\C(\Omega)}\sigma(\widetilde{Q}_0),
$$
where the implicit constants are clearly independent of $N$.

To estimate $\I\I$ we need the following auxiliary lemma whose proof will be postponed until the next the section.

\begin{lemma}\label{lemma:approx-saw}
There exists $\Psi_N\in C_0^\infty(\ree)$ such that

\begin{list}{$(\theenumi)$}{\usecounter{enumi}\leftmargin=.8cm
\labelwidth=.8cm\itemsep=0.2cm\topsep=.1cm
\renewcommand{\theenumi}{\roman{enumi}}}

\item $1_{\Omega_{\F_{N},\widetilde{Q}_0}^*}\lesssim \Psi_N\le 1_{\Omega_{\F_{N},\widetilde{Q}_0}^{**}}$.

\item $\sup_{X\in \Omega} |\nabla \Psi_N(X)|\,\delta(X)\lesssim 1$.

\item Set $\Sigma:=\partial\Omega_{\F_{N},\widetilde{Q}_0}^*$, 
\begin{equation}
\label{eq:defi-WN}
\W_N:=\bigcup_{Q\in\dd_{\F_{N},\widetilde{Q}_0}} \W_Q^*,
\qquad
\W_N^\Sigma:=
\big\{I\in \W_{N}:\, \exists\,J\in \W\setminus \W_N\quad \mbox{with}\quad  \partial I\cap\partial J\neq\emptyset
\big\}.
\end{equation}
\end{list}
Then
\begin{equation}
\nabla \Psi_N\equiv 0
\quad
\mbox{in}
\quad
\bigcup_{I\in \W_N \setminus \W_N^\Sigma }I^{***}
\quad\qquad\mbox{and}\qquad\quad
\sum_{I\in \W_N^\Sigma}\ell(I)^n\lesssim\sigma(\widetilde{Q}_0),
\label{eq:fregtgtr}
\end{equation}
with implicit constants depending on the allowable parameters but uniform in $N$.
\end{lemma}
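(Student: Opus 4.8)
The plan is to construct $\Psi_N$ from a fixed smooth partition of unity subordinate to the Whitney decomposition of $\Omega$, and then to read off each of the asserted properties; only the packing bound in \eqref{eq:fregtgtr} requires genuine work.

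For each $I\in\W$ fix $\phi_I\in C_0^\infty(\interior(I^{***}))$ with $0\le\phi_I\le 1$, $\phi_I\equiv 1$ on $I^{**}$, and $|\nabla\phi_I|\lesssim\ell(I)^{-1}$; this is possible because $\lambda<\lambda_0/4$ forces $I^{**}\subset\interior(I^{***})$ and $I^{***}\subset 4I\subset\Omega$. Since the Whitney boxes cover $\Omega$ and $\phi_I\equiv 1$ on $I$, we get $\sum_{I\in\W}\phi_I\ge 1$ on $\Omega$, while the bounded overlap of $\{I^{***}\}_{I\in\W}$ (again a consequence of $\lambda<\lambda_0/4$) gives $\sum_{I\in\W}\phi_I\lesssim 1$. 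Hence $\Phi_I:=\phi_I\,(\sum_{J\in\W}\phi_J)^{-1}$, extended by $0$ outside $\Omega$, is a smooth partition of unity with $\supp\Phi_I\subset\interior(I^{***})$, $0\le\Phi_I\le 1$, $\sum_{I\in\W}\Phi_I\equiv 1$ on $\Omega$ and $|\nabla\Phi_I|\lesssim\ell(I)^{-1}$ (on $\supp\phi_I$ only boundedly many $\phi_J$ are nonzero, each with $\ell(J)\approx\ell(I)$). Then set $\Psi_N:=\sum_{I\in\W_N}\Phi_I$. Because $\dd_{\F_N,\widetilde{Q}_0}$ is finite (it consists of the cubes in $\dd_{\widetilde{Q}_0}$ of length $>2^{-N}\ell(\widetilde{Q}_0)$) and each of its cubes carries a uniformly bounded number of Whitney boxes, $\W_N$ is finite and $\Psi_N\in C_0^\infty(\ree)$.

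Properties $(i)$, $(ii)$ and the first half of \eqref{eq:fregtgtr} then follow by inspection. Since $U_{Q'}^{**}=\bigcup_{I\in\W_{Q'}^*}I^{***}$ and $\W_N=\bigcup_{Q'\in\dd_{\F_N,\widetilde{Q}_0}}\W_{Q'}^*$ we have $\Omega_{\F_N,\widetilde{Q}_0}^{**}=\interior(\bigcup_{I\in\W_N}I^{***})$, so $\supp\Psi_N\subset\bigcup_{I\in\W_N}\interior(I^{***})\subset\Omega_{\F_N,\widetilde{Q}_0}^{**}$, which together with $0\le\Psi_N\le\sum_{I\in\W}\Phi_I\le 1$ gives $\Psi_N\le 1_{\Omega_{\F_N,\widetilde{Q}_0}^{**}}$; if $X\in\Omega_{\F_N,\widetilde{Q}_0}^{*}$ then $X\in I_0^{**}$ for some $I_0\in\W_{Q'}^*\subset\W_N$, so $\phi_{I_0}(X)=1$ and $\Psi_N(X)\ge\Phi_{I_0}(X)=(\sum_J\phi_J(X))^{-1}\gtrsim 1$, which is the bound $1_{\Omega_{\F_N,\widetilde{Q}_0}^{*}}\lesssim\Psi_N$. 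For $(ii)$, near any $X\in\Omega$ only boundedly many $\Phi_I$ are nonzero, each with $\ell(I)\approx\delta(X)$, so $|\nabla\Psi_N(X)|\,\delta(X)\lesssim 1$. For the first identity of \eqref{eq:fregtgtr}, if $I_0\in\W_N\setminus\W_N^\Sigma$ and $X\in\interior(I_0^{***})$, then $\Phi_I(X)\ne 0$ forces $X\in I^{***}\cap I_0^{***}$, hence (by the choice of $\lambda$, which makes $I^{***}$ and $J^{***}$ meet only when $\partial I$ meets $\partial J$) either $I=I_0$ or $\partial I\cap\partial I_0\ne\emptyset$; in the second case $I\in\W_N$ because $I_0\notin\W_N^\Sigma$, so every $\Phi_I$ nonzero at $X$ has $I\in\W_N$ and $\Psi_N(X)=\sum_{I\in\W}\Phi_I(X)=1$; thus $\Psi_N=1$ on $\interior(I_0^{***})$ and $\nabla\Psi_N\equiv 0$ there, hence on $I_0^{***}$ by continuity.

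The main obstacle is the packing estimate $\sum_{I\in\W_N^\Sigma}\ell(I)^n\lesssim\sigma(\widetilde{Q}_0)$. I would obtain it from the Ahlfors regularity of $\Sigma=\partial\Omega_{\F_N,\widetilde{Q}_0}^{*}$, which holds with constants depending only on the allowable parameters (uniformly in $N$) and with $\diam\Sigma\approx\ell(\widetilde{Q}_0)$, by \cite[Lemma 3.61]{HM-URHM}; since $\Sigma\subset\overline{B_{\widetilde{Q}_0}^*}$ by \eqref{tent-Q}, the AR upper bound yields $H^n(\Sigma)\lesssim\ell(\widetilde{Q}_0)^n\approx\sigma(\widetilde{Q}_0)$. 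For each $I\in\W_N^\Sigma$ pick $J\in\W\setminus\W_N$ with $\partial I\cap\partial J\ne\emptyset$; then $\ell(J)\approx\ell(I)$, and for $\lambda$ small no dilate $(I')^{**}$ ($I'\in\W$) reaches the center $X(J)$ (Whitney boxes lying within $\lambda\ell(I')$ of one another have comparable size), so $X(J)\notin\overline{\Omega_{\F_N,\widetilde{Q}_0}^{*}}$. The segment from the center of $I$ to $X(J)$ has length $\lesssim\ell(I)$, so it meets $\Sigma$ at a point $X_I$ with $\dist(X_I,I)\lesssim\ell(I)$ and hence $\delta(X_I)\approx\ell(I)$; the lower AR bound for $\Sigma$ then gives $H^n(\Sigma\cap B(X_I,c\,\ell(I)))\gtrsim\ell(I)^n$ whenever $\ell(I)\lesssim\diam\Sigma$ (the boundedly many $I$ of size $\approx\ell(\widetilde{Q}_0)$ contribute $\lesssim\ell(\widetilde{Q}_0)^n$ trivially), the balls $B(X_I,c\,\ell(I))$, $I\in\W_N^\Sigma$, have bounded overlap because $\delta\approx\ell(I)$ on each and comparable-size Whitney boxes cluster boundedly, and summing gives $\sum_{I\in\W_N^\Sigma}\ell(I)^n\lesssim H^n(\Sigma)\lesssim\sigma(\widetilde{Q}_0)$. (A more combinatorial alternative: each $I\in\W_N^\Sigma$ has a nearest boundary cube $Q_I^*$ of comparable size which is close to $\widetilde{Q}_0$, adjacent to a stopping cube of $\F_\star$, or at the bottom scale $\approx 2^{-N}\ell(\widetilde{Q}_0)$ inside $\widetilde{Q}_0$, and in each case one packs using $\sum_{Q_j\in\F_\star}\sigma(Q_j)\le\sigma(\widetilde{Q}_0)$.)
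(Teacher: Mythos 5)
Your proof is correct and follows essentially the same strategy as the paper's: a partition of unity $\Phi_I=\phi_I/\sum_J\phi_J$ subordinate to slightly enlarged Whitney boxes, $\Psi_N=\sum_{I\in\W_N}\Phi_I$, the observation that $\Psi_N\equiv 1$ on $I^{***}$ for $I\in\W_N\setminus\W_N^\Sigma$ (so the gradient vanishes there), and a packing bound obtained by locating for each $I\in\W_N^\Sigma$ a nearby point of $\Sigma$ with $\delta\approx\ell(I)$, invoking the Ahlfors regularity of the sawtooth boundary (uniform in $N$) together with bounded overlap of the resulting surface balls, and finally using $H^n(\Sigma)\lesssim\diam(\Sigma)^n\approx\sigma(\widetilde Q_0)$. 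The only cosmetic difference is that the paper interposes the intermediate dilate $\widetilde{I^{**}}=(1+3\lambda)I$ between $I^{**}$ and $I^{***}$ to house $\supp\phi_I$, whereas you go directly to $\interior(I^{***})$; this has no bearing on the argument.
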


Now we are ready to estimate $\I\I$. Using the previous lemma we have
$$
\I\I
\lesssim
\iint_{\ree} |\nabla^2 \G(X)|^2\,\G_\top(X)\,\Psi_N(X)\,dX
\le
\sum_{j=1}^{n+1} \iint_{\ree} |\nabla (\partial_j \G)(X)|^2\,\G_\top(X)\,\Psi_N(X)\,dX.
$$
Write ``$\partial$'' to denote a fixed generic derivative. The following observations will be used several times in the proof. Observe that Lemma \ref{lemma:est-nabla:II} and \eqref{est:G:saw} give that
 $\G, \G_\top, \nabla\G.\nabla\G_\top \in W^{1,2}(\overline{\Omega_{\star}})\cap L^\infty(\overline{\Omega_{\star}})$. Observe also that $\Psi_N$ is supported in $\Omega_\star$, thus $\partial\G\,\G_\top\,\Psi_N\in W^{1,2}_0(\Omega_\star)$. Hence we can find $\{\G_k\}_k\subset C_0^\infty(\Omega_\star)$ such that
$\G_k\to \partial\G\,\G_\top\,\Psi_N$ in $W^{1,2}(\Omega_\star)$. Note also that $|\nabla A|\in L^\infty(\overline{\Omega_{\star}})<\infty$ by our assumption $(a)$. These observations will, in particular, justify that all the integrals below are absolutely convergent.

We can now return to our task of estimating $\I\I$. By ellipticity and using
$\langle\cdot,\cdot\rangle$ to denote the inner product on $L^2(\ree)$ it follows that
\begin{multline*}
F_N:
=\iint_{\ree} |\nabla (\partial \G)(X)|^2\,\G_\top(X)\,\Psi_N(X)\,dX
\lesssim_\Lambda
\big\langle A^\top \nabla(\partial \G), \nabla(\partial \G)\,\G_\top\Psi_N \big\rangle
\\
=
\big\langle A^\top \nabla(\partial \G), \nabla(\partial \G\,\G_\top\Psi_N) \big\rangle
-
\frac12  \big\langle A^\top \nabla\big((\partial \G)^2\big),\nabla(\G_\top\Psi_N)\big\rangle
=:
\widetilde{\I}-\frac12 \widetilde{\I\I}.
\end{multline*}
To estimate $\widetilde{\I}$ we write
$$
\widetilde{\I}
=
\big\langle \partial( A^\top \nabla \G), \nabla(\partial \G\,\G_\top\Psi_N) \big\rangle
-
\big\langle \partial A^\top \nabla \G, \nabla(\partial \G\,\G_\top\Psi_N) \big\rangle
=:
\widetilde{\I}_1-\widetilde{\I}_2.
$$
Controlling $\widetilde{\I}_1$ it is not difficult as the previous observations along with \eqref{est:G:saw} give
$$
\widetilde{\I}_1
=
\lim_{k\to\infty} \big\langle \partial( A^\top \nabla \G), \nabla\G_k \big\rangle
=:
\lim_{k\to\infty} \widetilde{\I}_{1,k}.
$$
On the other hand
$$
\widetilde{\I}_{1,k}
=
\iint_{\ree}
\partial\Big( A^\top \nabla \G \cdot\nabla\G_k \Big)\,dX
-
\big\langle A^\top \nabla \G, \nabla\partial\G_k \rangle
=
\widetilde{\I}_{1,k,1}-\widetilde{\I}_{1,k,2}.
$$
Note that $A^\top \nabla \G \cdot\nabla\G_k\in W^{1,2}(\Omega_\star)$ and is supported in $\overline{\Omega_\star}$. Hence  $\widetilde{\I}_{1,k,1}=0$ by the divergence theorem. Also, $\widetilde{\I}_{1,k,2}=0$ since $L^\top \G=0$ in the weak-sense in $\Omega_{\star}$ (cf. \eqref{eq:normalization} and Lemma \ref{lemma2.green}) and $\partial \G_k\in C_0^\infty(\Omega_{\star})$. Therefore  $\widetilde{\I}_{1,k}=0$ and consequently $\widetilde{\I}_1=0$.

We next estimate $\widetilde{\I}_2$:
$$
\widetilde{\I}_2
=
\big\langle \partial A^\top \nabla \G, \nabla(\partial \G)\,\G_\top\Psi_N \big\rangle
+
\big\langle \partial A^\top \nabla \G, \nabla \G_\top\partial \G\,\Psi_N \big\rangle
+
\big\langle \partial A^\top \nabla \G, \nabla\Psi_N\,\partial \G\,\G_\top \big\rangle
=:
\widetilde{\I}_{21}+\widetilde{\I}_{22}+\widetilde{\I}_{23}.
$$
Note that by \eqref{est:G:saw}, Lemma \ref{lemma:approx-saw} and \eqref{tent-Q}
$$
|\widetilde{\I}_{22}|
\lesssim
\iint_{\ree} |\nabla A|\,|\nabla \G|^2|\nabla \G_\top|\,\Psi_N\,dX
\lesssim
\,\iint_{T_{\widetilde{Q}_0}^{**}}\,|\nabla A|dX
\lesssim
\|\nabla A\|_{\C(\Omega)}\,\sigma(\Delta_{\widetilde{Q}_0}^*)
\lesssim
\|\nabla A\|_{\C(\Omega)}\,\sigma(\widetilde{Q}_0),
$$
where we have used hypothesis $(c)$.
Also by \eqref{est:G:saw}, Lemma \ref{lemma:approx-saw} and \eqref{tent-Q},  and Young's inequality  we have
\begin{align*}
|\widetilde{\I}_{21}|
&\lesssim
\iint_{\ree} |\nabla A|\,|\nabla(\partial  \G)|\,\G_\top\,\Psi_N\,dX
\\
&\lesssim
\left(\iint_{\ree} |\nabla A|^2\,\G_\top\,\Psi_N\,dX\right)^{\frac12}
\left(\iint_{\ree} |\nabla(\partial  \G)|^2\,\G_\top\,\Psi_N dX\right)^{\frac12}
\\
&\lesssim
\left(\iint_{T_{\widetilde{Q}_0}^{**}} |\nabla A|^2\,\delta(\cdot)\, dX\right)^{\frac12}\,
F_N^{\frac12}
\\
&\lesssim
\big\||\nabla A|\,\delta\big\|_{L^\infty(\Omega)}^{\frac12}\,\|\nabla A\|_{\C(\Omega)}^{\frac12}
\sigma(\Delta_{\widetilde{Q}_0}^*)^{\frac12}
F_N^{\frac12}
\\
&\le
C\sigma(\widetilde{Q}_0)
+
\frac12\,F_N
.
\end{align*}
Note that $C$ depends on the 1-sided CAD constants, ellipticity,
$\big\||\nabla A|\,\delta\big\|_{L^\infty(\Omega)}$, $\|\nabla A\|_{\C(\Omega)}$
and $C_1$ fixed in the statement of Proposition \ref{prop:CME-G:ns}.
Analogously,
\begin{align*}
|\widetilde{\I}_{23}|
\lesssim
\iint_{\ree} |\nabla A|\, |\nabla\Psi_N|\,\delta(\cdot)\,dX
\lesssim
\iint_{T_{\widetilde{Q}_0}^{**}} |\nabla A|\,dX
\lesssim
\|\nabla A\|_{\C(\Omega)}\,\sigma(\widetilde{Q}_0).
\end{align*}
Collecting the obtained estimates we conclude that
$$
|\widetilde{\I}|=|\widetilde{\I}_2|\le
|\widetilde{\I}_{21}|+|\widetilde{\I}_{22}|+|\widetilde{\I}_{23}|
\le
C\,\sigma(\widetilde{Q}_0)
+
\frac12\,F_N.
$$

We next estimate $\widetilde{\I\I}$:
$$
\widetilde{\I\I}=
\big\langle A^\top \nabla\big((\partial \G)^2\big),\nabla \G_\top\Psi_N\big\rangle
+
\big\langle A^\top \nabla\big((\partial \G)^2\big),\nabla\Psi_N\,\G_\top\big\rangle
=: \widetilde{\I\I}_1+\widetilde{\I\I}_2.
$$
For $\widetilde{\I\I}_2$ we proceed as before, use  \eqref{est:G:saw}, Lemma \ref{lemma:approx-saw},  \eqref{eqn:Caccioppoli:nabla:general},  and Caccioppoli's  and Harnack's inequalities to obtain
\begin{multline*}
|\widetilde{\I\I}_2|
\lesssim
\iint_{\ree} |\nabla^2 \G|\,|\nabla\Psi_N|\,\delta(\cdot)\,dX
\lesssim
\sum_{I\in \W_N^\Sigma}\iint_{I^{***}}|\nabla^2 \G|\,dX
\lesssim
\sum_{I\in \W_N^\Sigma}\frac{|I|^{\frac12}}{\ell(I)}\left(\iint_{2\,I^{***}}|\nabla \G|^2\,dX\right)^{\frac12}
\\
\lesssim
\sum_{I\in \W_N^\Sigma}\frac{|I|^{\frac12}}{\ell(I)^2}\left(\iint_{3\,I^{***}}|\G|^2\,dX\right)^{\frac12}
\lesssim
\sum_{I\in \W_N^\Sigma}\frac{|I|}{\ell(I)}\,\frac{\G\big(X(I)\big)}{\delta(X(I))}
\lesssim
\sum_{I\in \W_N^\Sigma}\ell(I)^n
\lesssim
\sigma(\widetilde{Q}_0).
\end{multline*}
Let us turn our attention to $\widetilde{\I\I}_1$:
\begin{align*}
\widetilde{\I\I}_1=
\big\langle A\nabla \G_\top,
\nabla\big((\partial \G)^2\big)\Psi_N\big\rangle
=
\big\langle A\nabla \G_\top, \nabla\big((\partial \G)^2\,\Psi_N\big)\big\rangle
-
\big\langle
A\nabla \G_\top,
\nabla\Psi_N\,(\partial \G)^2\big\rangle
=:
\widetilde{\I\I}_{11}-\widetilde{\I\I}_{12}.
\end{align*}
Notice that $\widetilde{\I\I}_1=0$ since $L\G_\top$ in the weak sense in $\Omega_{\star}$
(cf. \eqref{eq:normalization-ns:top} and Lemma \ref{lemma2.green}) and
$(\partial \G)^2\,\Psi_N\in W^{1,2}_0(\Omega_{\star})$. Hence, another use of \eqref{est:G:saw} and Lemma \ref{lemma:approx-saw} produce
$$
|\widetilde{\I\I}_1|
=
|\widetilde{\I\I}_{12}|
\lesssim
\iint_{\ree} |\nabla \G|^2\,|\nabla \G_\top|\,|\nabla \Psi_N|\,dX
\lesssim
\sum_{I\in \W_N^\Sigma} \iint_{I^{***}}\,\frac1{\delta(X)}\,dX
\lesssim
\sum_{I\in \W_N^\Sigma} \ell(I)^n
\lesssim
\,\sigma(\widetilde{Q}_0).
$$
Putting things together
$$
|\widetilde{\I\I}|
\le
|\widetilde{\I\I}_{1}|
+
|\widetilde{\I\I}_{2}|
\lesssim\sigma(\widetilde{Q}_0).
$$

To conclude the proof we collect the obtained estimates
$$
0\le F_N
=
\widetilde{\I}-\frac12\,\widetilde{\I\I}
\le
|\widetilde{\I}|+\frac12\,|\widetilde{\I\I}|
\le
C\,\sigma(\widetilde{Q}_0)
+
\frac12\,F_N.
$$
Here all the constants are uniform in $N$. Since $F_N$ is finite
by \eqref{est:G:saw}, Lemma \ref{lemma:est-nabla:II} and the fact that $\supp \Psi_N\subset \overline{\Omega_\star}\subset \Omega$ we obtain
$$
F_N\lesssim
\,\sigma(\widetilde{Q}_0)
$$
which readily yields \eqref{eqn-goal:N:ns} with $C$ depending on the 1-sided CAD constants, ellipticity, $C_1$ fixed in the statement of Proposition \ref{prop:CME-G:ns}, $\big\||\nabla A|\,\delta\big\|_{L^\infty(\Omega)}$, and $\|\nabla A\|_{\C(\Omega)}$.
\end{proof}

\subsection{Proofs of Lemmas \ref{lemma:est-nabla:II} and \ref{lemma:approx-saw}}

In order to get the appropriate scale-invariant estimates in Lemma \ref{lemma:est-nabla:II}
we first present the case of the unit cube and then extend it to $\Omega$  by translation and rescaling.

\begin{lemma}\label{lemma:est-nabla:I}
Let $I_0:=(-\frac12,\frac12)^{n+1}\subset\ree$ and let $A\in \Lip(I_0)$ be a uniformly elliptic matrix in $I_0$. Given $K\ge 0$  there exists $C_{K}$ depending only on dimension, ellipticity and $K$  such that if $\|\nabla A\|_{L^\infty(I_0)}\le K$, then  for every $ u\in W^{1,2}(I_0)\cap L^\infty(I_0)$, $u\ge 0$, such that $Lu=0$ in the weak-sense in $I_0$ we have
\begin{equation}\label{eqn:point-nablaA:I0}
\sup_{X\in \frac12\,I_0} |\nabla u(X)|\le C_{K}\,\inf_{X\in \frac12 I_0} u(X),
\end{equation}
and
\begin{equation}\label{eqn:Caccioppoli:nabla:I0}
\iint_{\frac14\,I_0} |\nabla^2 u(X)|^2\,dX
\le
C_K\,\iint_{\frac12\,I_0} |\nabla u(X)|^2\,dX.
\end{equation}
\end{lemma}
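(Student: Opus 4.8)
The plan is to prove the second‑derivative estimate \eqref{eqn:Caccioppoli:nabla:I0} first, by a difference‑quotient argument that exploits $A\in\Lip(I_0)$, and then to obtain the gradient--Harnack bound \eqref{eqn:point-nablaA:I0} by combining interior regularity theory for $L$ with the interior Harnack inequality applied to the nonnegative solution $u$. Throughout, all constants will be tracked so as to depend only on $n$, $\Lambda$ and $K$.

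For \eqref{eqn:Caccioppoli:nabla:I0}, fix a coordinate direction $e_k$, and for small $|h|$ set $D_k^h f(X):=h^{-1}\big(f(X+he_k)-f(X)\big)$. Choose $\eta\in C_0^\infty(I_0)$ with $\eta\equiv 1$ on $\frac14 I_0$, $\supp\eta\subset\frac38 I_0$, and $\|\nabla\eta\|_\infty\lesssim 1$. Testing the weak formulation $\iint A\nabla u\cdot\nabla\varphi\,dX=0$ against $\varphi:=D_k^{-h}\big(\eta^2\,D_k^h u\big)$ (admissible for $|h|$ small, since then $\supp\varphi$ is compactly contained in $I_0$) and using the discrete product rule
\[
D_k^h(A\nabla u)(X)=A(X+he_k)\,\nabla(D_k^h u)(X)+(D_k^h A)(X)\,\nabla u(X),
\]
one gets
\[
\iint A(\cdot+he_k)\,\nabla(D_k^h u)\cdot\nabla(\eta^2 D_k^h u)\,dX=-\iint (D_k^h A)\,\nabla u\cdot\nabla(\eta^2 D_k^h u)\,dX.
\]
Expanding $\nabla(\eta^2 D_k^h u)=\eta^2\nabla(D_k^h u)+2\eta(D_k^h u)\nabla\eta$, invoking uniform ellipticity on the left, the pointwise bound $|D_k^h A|\le\|\nabla A\|_{L^\infty(I_0)}\le K$ on the right, and Young's inequality to absorb every term carrying $\iint\eta^2|\nabla(D_k^h u)|^2$, we arrive at
\[
\iint\eta^2\,|\nabla(D_k^h u)|^2\,dX\le C_K\iint_{\frac38 I_0}\big(|D_k^h u|^2+|\nabla u|^2\big)\,dX .
\]
Since $\|D_k^h u\|_{L^2(\frac38 I_0)}\lesssim\|\nabla u\|_{L^2(\frac12 I_0)}$ uniformly for $|h|$ small, the right‑hand side is $\le C_K\iint_{\frac12 I_0}|\nabla u|^2\,dX$ uniformly in $h$; hence $\{\nabla(D_k^h u)\}_h$ is bounded in $L^2(\frac14 I_0)$, so $\partial_k u\in W^{1,2}(\frac14 I_0)$, $\nabla(D_k^h u)$ converges weakly in $L^2(\frac14 I_0)$ to $\nabla(\partial_k u)$, and the displayed estimate passes to the limit $h\to 0$. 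Summing over $k=1,\dots,n+1$ gives \eqref{eqn:Caccioppoli:nabla:I0}. (This step uses neither $u\ge 0$ nor $u\in L^\infty$.)

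For \eqref{eqn:point-nablaA:I0}, it suffices to prove the interior gradient estimate $\sup_{\frac12 I_0}|\nabla w|\le C_K\,\|w\|_{L^2(\frac34 I_0)}$ for every $w$ with $Lw=0$ in $I_0$; indeed, applying this to $w=u$ and using the interior Harnack inequality for the nonnegative $L$‑solution $u$ (valid for merely bounded measurable $A$) together with the interior $L^\infty$ bound, one has $\|u\|_{L^2(\frac34 I_0)}\lesssim\|u\|_{L^\infty(\frac34 I_0)}\lesssim\inf_{\frac34 I_0}u\le\inf_{\frac12 I_0}u$, which is \eqref{eqn:point-nablaA:I0}. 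To prove the interior gradient estimate I would invoke interior Schauder estimates for divergence‑form elliptic equations with H\"older‑continuous leading coefficients: since $A\in\Lip(I_0)\subset C^{0,1/2}(I_0)$ with $[A]_{C^{0,1/2}(I_0)}\lesssim_n\|\nabla A\|_{L^\infty(I_0)}\le K$, it follows that $w\in C^{1,1/2}_{\rm loc}(I_0)$ with $\sup_{\frac12 I_0}|\nabla w|\le C_{n,\Lambda,K}\|w\|_{L^2(\frac34 I_0)}$. A more self‑contained route, in keeping with the rest of the paper, is a bootstrap: by \eqref{eqn:Caccioppoli:nabla:I0} (rescaled onto interior subcubes) and Caccioppoli's inequality, $\nabla w\in W^{1,2}_{\rm loc}(I_0)$, so by Sobolev embedding in $\ree$, $\nabla w\in L^{p_1}_{\rm loc}$ with $\tfrac1{p_1}=\tfrac12-\tfrac1{n+1}$; since each $v_k:=\partial_k w$ is a weak solution of $-\div(A\nabla v_k)=\div F_k$ with $|F_k|\le K|\nabla w|$, the interior $W^{1,p}$ estimate for this equation (available because $A$ is continuous) followed by Sobolev embedding raises the reciprocal integrability exponent of $\nabla w$ by the fixed amount $\tfrac1{n+1}$ at each stage, so after finitely many stages $\nabla w\in L^\infty_{\rm loc}$, and a covering/scaling argument yields the estimate on $\frac12 I_0$.

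The main obstacle is not any single estimate but the bookkeeping in the last step: one must check that the finitely many bootstrap steps (or the single Schauder application) produce a constant depending only on $n$, $\Lambda$, $K$ --- in particular that $[A]_{C^{0,1/2}(I_0)}$ and the constants in the $W^{1,p}$ and De Giorgi--Nash--Moser estimates are controlled by these parameters --- the underlying point being that $A\in\Lip$ is exactly enough to force $w\in C^{1,\alpha}_{\rm loc}$ for every $\alpha<1$ (so $\nabla w$ is controlled pointwise) but not more (so $\nabla^2 w$ is controlled only in the $L^2$‑average sense of \eqref{eqn:Caccioppoli:nabla:I0}). The passage to the weak limit $h\to0$ in the difference‑quotient step is immediate once the $h$‑uniform $L^2$ bound on $\nabla(D_k^h u)$ is in hand, and differentiating the equation in the bootstrap is legitimate precisely because $u\in W^{2,2}_{\rm loc}$, which is the content of \eqref{eqn:Caccioppoli:nabla:I0}. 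Finally, Lemma \ref{lemma:est-nabla:II} would follow from Lemma \ref{lemma:est-nabla:I} by translating and dilating to the unit cube: on a cube of side $\approx\delta(X)$ near $X$ the rescaled matrix $\widetilde A$ satisfies $\|\nabla\widetilde A\|_{L^\infty}\lesssim K$ by \eqref{eqn:point-nablaA:general}, and unravelling the scaling turns \eqref{eqn:point-nablaA:I0}--\eqref{eqn:Caccioppoli:nabla:I0} into \eqref{eqn:Caccioppoli:general}--\eqref{eqn:Caccioppoli:nabla:general}.
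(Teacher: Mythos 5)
Your proof is correct, and it reaches the same two estimates by routes that are mathematically equivalent to but presentationally distinct from the paper's. For \eqref{eqn:Caccioppoli:nabla:I0}, the paper first cites \cite[Theorem 8.8]{GT} to obtain the qualitative fact $u\in W^{2,2}(\tfrac34 I_0)$, and then carries out a Caccioppoli computation testing against $\partial u\,\varphi^2$ (via a smooth approximating sequence $u_k$) to extract the quantitative estimate with the required dependence on $K$. You instead run the Nirenberg difference--quotient argument directly, testing against $D_k^{-h}(\eta^2 D_k^h u)$; this proves $u\in W^{2,2}_{\rm loc}$ and the estimate simultaneously, with $K$ entering through $|D_k^h A|\le K$. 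These are the same idea in two dialects --- GT~8.8 is itself proved by difference quotients, and in the limit $h\to 0$ your identity collapses onto the paper's. Your version has the minor advantage of being fully self-contained and making the $K$-dependence visible at every step; the paper's has the minor advantage of deferring the approximation bookkeeping to a citation.

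For \eqref{eqn:point-nablaA:I0}, the paper simply quotes \cite[Lemma 3.1]{GW}, an interior gradient bound $\sup_D|\nabla u|\,\dist(\cdot,\partial D)\le C_K\sup_D u$ valid for Lipschitz coefficients, and then applies Harnack. You replace the citation by either an interior Schauder estimate (observing $A\in\Lip(I_0)\subset C^{0,1/2}(I_0)$ with $[A]_{C^{0,1/2}}\lesssim_n K$) or a Sobolev/$W^{1,p}$ bootstrap in which each step raises the reciprocal integrability exponent of $\nabla u$ by the fixed amount $\tfrac1{n+1}$, terminating after $O(n)$ steps; in either case you then invoke De Giorgi--Nash--Moser and Harnack to pass from $\|u\|_{L^2}$ to $\inf u$. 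Both routes produce constants depending only on $n$, $\Lambda$, $K$, and your bookkeeping concerning why the bootstrap terminates and why the constants are controlled is accurate. One small remark: in the bootstrap branch you should be slightly careful at the threshold $p\approx n+1$, where $W^{1,p}\hookrightarrow L^\infty$ fails at the borderline exponent; but since the increments $\tfrac1{n+1}$ are fixed and you may overshoot past $p=n+1$ in finitely many steps (landing in $W^{1,p}\hookrightarrow C^{0,\alpha}$), this is a non-issue. Your closing observation that Lemma~\ref{lemma:est-nabla:II} follows by rescaling onto Whitney cubes, with \eqref{eqn:point-nablaA:general} translating into a uniform $\Lip$ bound on the rescaled matrix, is exactly the paper's reduction.
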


\begin{proof}
To prove \eqref{eqn:point-nablaA:I0} we invoke  \cite[Lemma 3.1]{GW} in the open bounded domain $\frac34\,I_0$ and
there exist $C_K$ depending on $n$, ellipticity and $K$ such that
$$
\sup_{X\in \frac34I_0}|\nabla u(X)|\,\dist\big(X,\partial \big(\tfrac34I_0\big)\big)\le C_K\,\sup_{X\in \frac34 I_0}u(X).
$$
This and Harnack's inequality give at once \eqref{eqn:point-nablaA:I0}.

We next prove \eqref{eqn:Caccioppoli:nabla:I0}. Let us first observe that since $A$ is Lipschitz in $I_0$, and $u\in W^{1,2}(I_0)$ satisfies $Lu=0$ in the weak-sense in $I_0$, it follows that $u\in W^{2,2}(\frac34\,I_0)$ by \cite[Theorem 8.8]{GT}. With this in hand we are going to use a Caccioppoli type argument. Let $\varphi\in C_0^\infty(\ree)$  be a smooth cut-off of  $\frac14\,I_0$, that is, $1_{\frac14I_0}\le \varphi\le 1_{\frac38I_0}$ with $\|\nabla \varphi\|_{L^\infty}\le C_0$. Write ``$\partial$'' to denote a fixed generic derivative and observe that since $u\in W^{2,2}(\frac34\,I_0)$ it follows that $\partial u\,\varphi^2\in W^{1,2}_0(\frac12\,I_0)$. Hence there exists $\{u_k\}_k\subset C_0^\infty(\frac12\,I_0)$ such that
$u_k\to \partial u\,\varphi^2$ in $W^{1,2}(\frac12\,I_0)$. Writing $\Lambda$ for the ellipticity constant of $A$, we then have
\begin{multline*}
\I
:=
\iint_{\ree} |\nabla (\partial u)(X)|^2\,\varphi(X)^2\,dX
\le
\Lambda\,
\iint_{\ree} A(X)\,\nabla (\partial u)(X)\cdot \nabla(\partial u)(X)\,\varphi(X)^2\,dX
\\
=
\Lambda\Big(\iint_{\ree} A(X)\,\nabla (\partial u)(X)\cdot
\big[\nabla\big(\partial u\,\varphi^2\big)(X)- 2\nabla\varphi(X)\, (\partial u)(X)\,\varphi(X)\big]\,dX\Big)
=:\Lambda\,(\I_1-2\,\I_2).
\end{multline*}
For $\I_2$ we observe that by the Cauchy-Schwarz inequality
$$
|\I_2|=\Big|\iint_{\ree} A(X)\,\nabla (\partial u)(X)\cdot\nabla\varphi(X)\, (\partial u)(X)\,\varphi(X)\,dX\Big|
\le
\Lambda\,C_0\,\I^{\frac12}
\Big(\iint_{\frac12 I_0} |\nabla u(X)|^2\,dX\Big)^{\frac12}.
$$
For $\I_1$ we use the sequence $\{u_k\}_k$ introduced above and note that
\begin{align*}
\I_1^k:&=
\iint_{\ree} A(X)\,\nabla (\partial u)(X)\cdot \nabla u_k(X)\,dX
\\
&=
\iint_{\ree} \partial\big(A\,\nabla u\cdot \nabla  u_k\big)(X)\,dX
-
\iint_{\ree} A(X)\,\nabla u(X)\cdot \nabla(\partial u_k) (X)\,dX
\\
&\qquad\qquad
-
\iint_{\ree} \partial A(X)\,\nabla u(X)\cdot \nabla u_k(X)\,dX
\\
&
=
-
\iint_{\ree} \partial A(X)\,\nabla u(X)\cdot \nabla  u_k (X)\,dX.
\end{align*}
Here we have used that since $\{u_k\}\subset C_0^\infty(\frac12\,I_0)$ both terms in the second line vanish. In fact the first term is the integral of a derivative of a $W^{1,2}(\ree)$ compactly supported function, and the second term vanishes because $Lu=0$ in $I_0$ in the weak sense and $\partial u_k\in C_0^\infty(\frac12\,I_0)$. To continue with our estimate we observe that by Cauchy-Schwarz
\begin{multline*}
|\I_1|
=
\big|\lim_{k\to\infty} \I_1^k\big|
=
\Big|\iint_{\ree} \partial A(X)\,\nabla u(X)\cdot \nabla \big( \partial u\,\varphi^2\big)(X)\,dX\Big|
\\
\le
\|\nabla A\|_{L^\infty(I_0)}\,\left(
\iint_{\ree} |\nabla u(X)|\,|\nabla(\partial u)(X)|\,\varphi(X)^2\,dX
+
2\,\iint_{\ree} |\nabla u(X)|^2\,|\nabla\varphi(X)|\,\varphi(X)\,dX
 \right)
\\
\le
\|\nabla A\|_{L^\infty(I_0)}\left(\I^{\frac12}
\Big(\iint_{\frac12 I_0} |\nabla u(X)|^2\,dX\Big)^{\frac12}
+2\,C_0\,\iint_{\frac12 I_0} |\nabla u(X)|^2\,dX\right)
.
\end{multline*}
Collecting all the obtained estimates we conclude that
$$
\I
\le \Lambda\,\left(\|\nabla A\|_{L^\infty(I_0)} + 2\,C_0\Lambda\right)\I^{\frac12} \Big(\iint_{\frac12 I_0} |\nabla u(X)|^2\,dX\Big)^{\frac12}  +
2\,\Lambda\,C_0\, \|\nabla A\|_{L^\infty(I_0)} \,\iint_{\frac12 I_0} |\nabla u(X)|^2\,dX.
$$
From here we can use  Young's inequality with epsilon in the first term on the right hand side, hide $\I$ (which is finite since $u\in W^{2,2}(\frac34\,I_0)$) and the desired estimates follows easily.
\end{proof}

\begin{proof}[Proof of Lemma \ref{lemma:est-nabla:II}]

This result follows easily from Lemma \ref{lemma:est-nabla:I}. For $(i)$, first $u\in L^\infty_{\rm loc}(\Omega)$ by interior regularity. We take $J$,  any Whitney cube in $\Omega$, and translate and rescale $2\,J$ so that it becomes $\overline{I_0}$. Note that \eqref{eqn:point-nablaA:general} translates into the boundedness of the gradient of the corresponding matrix in Lemma \ref{lemma:est-nabla:I} (up to some dimensional constants). Hence \eqref{eqn:point-nablaA:I0} and Harnack's inequality give as desired  \eqref{eqn:Caccioppoli:general}.

The proof of \eqref{eqn:Caccioppoli:nabla:general}  follows easily from \eqref{eqn:Caccioppoli:nabla:I0}  by rescaling  and translation, again interior regularity gives that $u\in L^\infty_{\rm loc}(6I)$. Details are left to the reader.
\end{proof}
\begin{proof}[Proof of Lemma \ref{lemma:approx-saw}]
We recall that given $I$, any closed dyadic cube in $\ree$, we set $I^{**}=(1+2\,\lambda)I$ and $I^{***}=(1+4\,\lambda)I$. Let us introduce $\widetilde{I^{**}}=(1+3\,\lambda)I$ so that
\begin{equation}
I^{**}
\subsetneq
\interior(\widetilde{I^{**}})
\subsetneq \widetilde{I^{**}}
\subset
\interior(I^{***}).
\label{eq:56y6y6}
\end{equation}

Given $I_0:=[-\frac12,\frac12]^{n+1}\subset\ree$, fix $\phi_0\in C_0^\infty(\ree)$ such that
 $1_{I_0^{**}}\le \phi_0\le 1_{\widetilde{I_0^{**}}}$ and $|\nabla \phi_0|\lesssim 1$ (the implicit constant will depend on the parameter $\lambda$). For every $I\in \W=\W(\Omega)$ we set $\phi_I(\cdot)=\phi_0\big(\frac{\,\cdot\,-X(I)}{\ell(I)}\big)$ so that $\phi_I\in C^\infty(\ree)$, $1_{I^{**}}\le \phi_I\le 1_{\widetilde{I^{**}}}$ and $
|\nabla \phi_I|\lesssim \ell(I)^{-1}$ (with implicit constant depending only on $n$ and $\lambda$).

For every $X\in\Omega$, we let $\Phi(X):=\sum_{I\in \W} \phi_I(X)$. It then follows that $\Phi\in C_{\rm loc}^\infty(\Omega)$ since for every compact subset of $\Omega$, the previous sum has finitely many non-vanishing terms. Also, $1\le \Phi(X)\lesssim C_{\lambda}$ for every $X\in \Omega$ since the family $\{\widetilde{I^{**}}\}_{I\in \W}$ has bounded overlap by our choice of $\lambda$. Hence we
can set $\Phi_I=\phi_I/\Phi$ and one can easily see that $\Phi_I\in C_0^\infty(\ree)$, $C_\lambda^{-1}1_{I^{**}}\le \Phi_I\le 1_{\widetilde{I^{**}}}$ and $
|\nabla \Phi_I|\lesssim \ell(I)^{-1}$. With this in hand and by recalling the definition of $\W_N$ in \eqref{eq:defi-WN} we set
$$
\Psi_N(X)
:=
\sum_{I\in \W_N} \Phi_I(X)
=
\frac{\sum\limits_{I\in \W_N} \phi_I(X)}{\sum\limits_{I\in \W} \phi_I(X)},
\qquad
X\in\Omega.
$$
We first note that the number of terms in the sum defining $\Psi_N$ is bounded depending on $N$. Indeed, if $Q\in \dd_{\F_N, \widetilde{Q}_0}$ then $Q\in \dd_{\widetilde{Q}_0}$ and $2^{-N}\ell(\widetilde{Q}_0)<\ell(Q)\le \ell(\widetilde{Q}_0)$ which implies that $\dd_{\F_N, \widetilde{Q}_0}$ has finite cardinality with bounds depending only on the AR property and $N$. Also, by construction $\W_Q^*$ has cardinality depending only in the allowable parameters. Hence, $\# \W_N\lesssim C_N<\infty$. This and the fact that each $\Phi_I\in C_0^\infty(\ree)$ yield that $\Psi_N\in C_0^\infty(\ree)$. Note also that \eqref{eq:56y6y6} and the definition of $\W_N$ in \eqref{eq:defi-WN} give
$$
\supp \Psi_I
\subset
\bigcup_{I\in \W_N} \widetilde{I^{**}}
=
\bigcup_{Q\in\dd_{\F_{N},\widetilde{Q}_0}}
\bigcup_{I\in \W_Q^*} \widetilde{I^{**}}
\subset
\interior\Big(
\bigcup_{Q\in\dd_{\F_{N},\widetilde{Q}_0}}
\bigcup_{I\in \W_Q^*} I^{***}
\Big)
=
\interior\Big(
\bigcup_{Q\in\dd_{\F_{N},\widetilde{Q}_0}}
U_Q^{**}
\Big)
=
\Omega_{\F_{N},\widetilde{Q}_0}^{**}.
$$
This, the fact that $\W_N\subset \W$ and the definition of $\Psi_N$ immediately gives that
$\Psi_N\le 1_{\Omega_{\F_{N},\widetilde{Q}_0}^{**}}$. On the other hand if $X\in \Omega_{\F_{N},\widetilde{Q}_0}^{*}$ then the exists $I\in \W_N$ such that $X\in I^{**}$ in which case $\Psi_N(X)\ge \Phi_I(X)\ge C_\lambda^{-1}$. This completes the proof of $(i)$.

To obtain $(ii)$ we note that for every $X\in \Omega$
$$
|\nabla \Psi_N(X)|
\le
\sum_{I\in \W_N} |\nabla\Phi_I(X)|
\lesssim
\sum_{I\in \W} \ell(I)^{-1}\,1_{\widetilde{I^{**}}}(X)
\lesssim
\delta(X)^{-1}
$$
where we have used that if $X\in \widetilde{I^{**}}$ then $\delta(X)\approx \ell(I)$ and also that the family $\{\widetilde{I^{**}}\}_{I\in \W}$ has bounded overlap.

Let us finally address $(iii)$. Fix $I\in\W_N\setminus \W^{\Sigma}_N$ and  $X\in I^{***}$, and set $\W_X:=\{J\in \W: \phi_J(X)\neq 0\}$. We first note that  $\W_X\subset \W_N$. Indeed, if $\phi_J(X)\neq 0$ then $X\in \widetilde{J^{**}}$.
Hence $X\in I^{***}\cap J^{***}$ and our choice of $\lambda$ gives that $\partial I$ meets $\partial J$, this in turn implies that $J\in \W_N$ since $I\in\W_N\setminus \W^{\Sigma}_N$. All these yield
$$
\Psi_N(X)
=
\frac{\sum\limits_{J\in \W_N} \phi_J(X)}{\sum\limits_{J\in \W} \phi_J(X)}
=
\frac{\sum\limits_{J\in \W_N\cap \W_X} \phi_J(X)}{\sum\limits_{J\in \W\cap \W_X} \phi_J(X)}
=
\frac{\sum\limits_{J\in \W_N\cap \W_X} \phi_J(X)}{\sum\limits_{J\in \W_N\cap \W_X} \phi_J(X)}
=
1.
$$
Hence $\Psi_N\big|_{I^{***}}\equiv 1$  for every $I\in\W_N\setminus \W^{\Sigma}_N$. This and the fact that $\Psi_N\in C_0^\infty(\ree)$ immediately give that $\nabla \Psi_N\equiv 0$ in $\bigcup_{I\in \W_N \setminus \W_N^\Sigma }I^{***}$.

To complete the proof we need to estimate the sum in \eqref{eq:fregtgtr}. Recall that $\Sigma=\partial \Omega_{\F_{N},\widetilde{Q}_0}^{*}\subset \Omega$ and let $I\in \W_N^\Sigma$. We claim that there exists $Z_I\in \Sigma$ such that $\dist(Z_I,I)\approx \ell(I)\approx \delta(Z_I)$. To prove this we first observe that $\interior(I^{**})\subset \Omega_{\F_{N},\widetilde{Q}_0}^{*}\subset \Omega$ since $I\in \W_N$. On the other hand, $I\in \W_N^\Sigma$ implies that there is  $J\in \W\setminus \W_N$ such that $\partial I\cap \partial J\neq\emptyset$. In particular, $X(J)\in \ree\setminus \Omega_{\F_{N},\widetilde{Q}_0}^{*}$ (by our choice of $\lambda$) where $X(J)$ is the center of $J$. Then we can find $Z_I\in \Sigma$ with $Z_I$ in the segment joining $X(J)$ and $X(I)$. Note that $\dist(Z_I, I)\le |Z_I-X(I)|\le |X(I)-X(J)|\lesssim \ell(I)$ since $\partial I\cap \partial J\neq\emptyset$ implies that $\ell(I)\approx \ell(J)$ by the nature of the Whitney cubes. On the other hand since $\interior(I^{**})\subset \Omega_{\F_{N},\widetilde{Q}_0}^{*}$ we have that  $Z_I\notin \interior(I^{**})$, thus $\dist(Z_I, I)\gtrsim \ell(I)$ (with implicit constant depending on $\lambda$). Finally, $\ell(I)\approx\ell(J)\approx\delta(Z_I)$.

One we have chosen $Z_I$ we let $\Delta_I^\Sigma=B(Z_I,\delta(Z_I)/2)\cap \Sigma$, which is a surface ball with respect to the domain $\Omega_{\F_{N},\widetilde{Q}_0}^{*}$ centered on $Z_I\in \Sigma=\partial \Omega_{\F_{N},\widetilde{Q}_0}^{*}$.  Since $\partial \Omega_{\F_{N},\widetilde{Q}_0}^{*}$ is AR (cf. \cite[Lemma 3.61]{HM-URHM}) with bounds that do not depend on $N$,  it follows that
$$
\sum_{I\in \W_N^\Sigma}\ell(I)^n
\approx
\sum_{I\in \W_N^\Sigma}\delta (Z_I)^n
\approx
\sum_{I\in \W_N^\Sigma}
H^n(\Delta_I^\Sigma).
$$
We next see that the family $\{\Delta_I^\Sigma\}_{I\in \W_N^\Sigma}$ has bounded overlap. Indeed, suppose that $\Delta_{I_1}^\Sigma\cap \Delta_{I_2}^\Sigma\neq\emptyset$ and take $Y$ in that intersection. Assume for instance that $\ell(I_1)\le \ell(I_2)$. then,
$$
\delta(Z_{I_2})
\le |Z_{I_2}-Y|+|Y-Z_{I_1}|+\delta(Z_{I_1})
\le
\frac12\,\delta(Z_{I_2}) +\frac32\, \delta(Z_{I_1})
$$
which implies that $\ell(I_2)\approx\delta(Z_{I_2})\lesssim \delta(Z_{I_1})\approx \ell(I_1)$. Thus, $\ell(I_1) \approx \ell(I_2)$. Moreover,
$$
\dist(I_1, I_2)
\le
\dist(I_1, Z_{I_1})
+
|Z_{I_1}-Y|
+
|Y-Z_{I_2}|
+
\dist(I_2, Z_{I_2})
\lesssim
\ell(I_1)+\ell(I_2)
\approx
\ell(I_1)
\approx
\ell(I_2).
$$
By the properties of the Whitney cubes it then follows that the family $\{\Delta_I^\Sigma\}_{I\in \W_N^\Sigma}$ has bounded overlap. Thus,
$$
\sum_{I\in \W_N^\Sigma}\ell(I)^n
\approx
\sum_{I\in \W_N^\Sigma}
H^n(\Delta_I^\Sigma)
\lesssim
H^n\Big(\bigcup_{I\in \W_N^\Sigma}\Delta_I^\Sigma\Big)
\le
H^n(\Sigma)
=
H^n(\partial \Omega_{\F_{N},\widetilde{Q}_0}^{*})
\lesssim
\diam(\partial \Omega_{\F_{N},\widetilde{Q}_0}^{*})^n
\lesssim
\ell(\widetilde{Q}_0)^n,
$$
where we have used again that $\partial \Omega_{\F_{N},\widetilde{Q}_0}^{*}$ is AR and also that this set is bounded with diameter controlled by $\ell(\widetilde{Q}_0)$. This completes the proof of Lemma \ref{lemma:approx-saw}.
\end{proof}

\appendix

\section{The $A_\infty$ property in Lipschitz domains:  the Kenig-Pipher argument}\label{KP}

The result of \cite{KKiPT} allows for a slight condensation  of the proof of the results of \cite{KP},
albeit with the very same ideas.  For the reader's convenience, we supply the shortened proof here
following the key part of \cite{KP} essentially unchanged. 
To be precise, we shall prove the following. 

\begin{knowntheorem} {\bf (\cite{KP})}.
Let $\Omega\subset \mathbb{R}^{n+1}$ be a Lipschitz domain, and suppose that $L=-\div A \nabla$
is an elliptic operator in $\Omega$ satisfying Hypothesis \ref{hyp1}, but
with property $(c)$ replaced by the weaker condition
\eqref{car-A-square}.  Then elliptic measure is absolutely continuous with respect to surface measure $\sigma$ on $\pom$, and the Poisson kernel satisfies Hypothesis \ref{hyp2}, with constants depending only on dimension, the Lipschitz character of $\om$, and the constants in the modified version of Hypothesis \ref{hyp1} that we assume here.
\end{knowntheorem}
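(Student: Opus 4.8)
The plan is to separate out the one genuinely analytic ingredient---a Carleson measure estimate for bounded $L$-null solutions---and to deduce the $A_\infty$ conclusion from it by invoking the main result of \cite{KKiPT}; this is the ``condensation'' alluded to, since \cite{KP} originally had to run all the way to $L^p$-solvability and only then to $A_\infty$, whereas \cite{KKiPT} lets us stop as soon as the Carleson estimate is available. Concretely, the first step is to reduce the theorem to the following assertion: there is a constant $C$, depending only on $n$, the Lipschitz character of $\Omega$, the ellipticity constant, $\big\||\nabla A|\,\delta\big\|_{L^\infty(\Omega)}$ and the constant in \eqref{car-A-square}, such that every weak solution $u$ of $Lu=0$ in $\Omega$ with $\|u\|_{L^\infty(\Omega)}\le 1$ satisfies
\begin{equation*}
\sup_{\substack{x\in\pom\\ 0<r<\diam(\pom)}}\ \frac{1}{\sigma(\Delta(x,r))}\iint_{B(x,r)\cap\Omega}|\nabla u(X)|^{2}\,\delta(X)\,dX\ \le\ C.
\end{equation*}
Since $\pom$ is Ahlfors regular and satisfies the capacity density condition (being Lipschitz), the hypotheses of \cite{KKiPT} are met; granting the displayed estimate, that theorem yields $\omega_L\in A_\infty(\pom)$, and by Definition \ref{defAinfty} together with the equivalence \eqref{eq1.wRH} this is precisely the statement that $\omega_L\ll\sigma$ and that the Poisson kernel satisfies the reverse H\"older bound \eqref{eq:higher-inte} of Hypothesis \ref{hyp2}. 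Finally, because conditions $(a)$, $(b)$ and \eqref{car-A-square} involve only $\nabla A$ and hence are unchanged under $A\mapsto A^\top$, the same reasoning applied to $L^\top$ gives $\omega_{L^\top}\in A_\infty(\pom)$, which is what is used in Corollary \ref{cor1}.

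To prove the displayed Carleson estimate I would follow \cite{KP} with no essential change. Fix a surface ball $\Delta=\Delta(x,r)$; the part of the integral over $\{X\in\Omega:\ \delta(X)\gtrsim\epsilon r\}\cap B(x,2r)$ is immediately $\lesssim_\epsilon\sigma(\Delta)$ by interior gradient estimates, so the issue is the contribution of the thin region near $\pom$. Working in a coordinate patch in which $\pom$ is a Lipschitz graph, with $\Omega$ locally the region above the graph, and starting from the pointwise inequality $|\nabla u|^2\lesssim_\Lambda\div(A\nabla(u^2))$ (which uses only $Lu=0$), one integrates by parts against a weight built from a regularized distance $\rho\approx\delta$ with $|\nabla\rho|\lesssim 1$ and a cutoff $\phi$ adapted to $2\Delta$ with $|\nabla\phi|\lesssim r^{-1}$; the region of integration is first truncated so as to stay at a positive distance from $\pom$, which renders every integral below absolutely convergent and legitimizes the absorptions, the resulting bounds being uniform in the truncation parameter. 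Carrying out the integration by parts, the square function $\iint|\nabla u|^2\rho\,\phi^2\,dX$ appears on one side, and on the other one is left with: a main term, controlled by $\|u\|_{L^\infty}^2\,\sigma(\Delta)$ via Caccioppoli's inequality on Whitney boxes and the Lipschitz geometry; cutoff error terms, in which a derivative lands on $\phi$, again $\lesssim\sigma(\Delta)$ for the same reasons; and coefficient error terms, in which a derivative lands on $A$, of the schematic forms $\iint|\nabla A|\,|\nabla u|^2\,\rho\,\phi^2\,dX$ and $\iint|\nabla A|\,|u|\,|\nabla u|\,\phi^2\,dX$, to be controlled using $\big\||\nabla A|\,\delta\big\|_{L^\infty}<\infty$, the Cauchy--Schwarz inequality and \eqref{car-A-square}.

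The main obstacle---precisely the ``key part of \cite{KP}'' that we reproduce essentially unchanged---is the bookkeeping of the coefficient error terms, and in particular the fact that $\big\||\nabla A|\,\delta\big\|_{L^\infty}$ is not assumed to be small. \cite{KP} handle this by a scale-by-scale (corona, or stopping-time) organization: on each stopping-time region $L$ is treated as a small perturbation of a constant-coefficient operator, so that the genuinely non-small contributions are summed over a Carleson family of scales rather than absorbed outright, and it is exactly the Carleson measure condition \eqref{car-A-square} that makes this summation finite. A secondary, routine point is the a priori finiteness needed to justify the absorptions, which---as throughout the body of the present paper---is arranged by truncating the region away from $\pom$ and letting the truncation tend to zero. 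Once \eqref{car-A-square} and $\big\||\nabla A|\,\delta\big\|_{L^\infty}<\infty$ have been used to close this estimate, Step 1 completes the proof.
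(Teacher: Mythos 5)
Your skeleton is right: reduce to a Carleson-measure estimate for bounded $L$-null solutions, invoke \cite{KKiPT} to go directly from that estimate to $\omega_L\in A_\infty$, and note that the hypotheses are symmetric in $A\mapsto A^\top$. That matches the appendix.

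Where you go off the rails is in the description of the proof of the Carleson estimate itself. You assert that after integrating by parts one is left with a coefficient error term of the schematic form $\iint|\nabla A|\,|\nabla u|^2\,\rho\,\phi^2\,dX$, and that since $\big\||\nabla A|\delta\big\|_\infty$ is not small, one must handle it by a corona/stopping-time decomposition, ``treating $L$ as a small perturbation of a constant-coefficient operator on each stopping-time region.'' Neither the original argument of \cite{KP} nor the condensed version in the appendix does anything of the kind, and for good reason: the term you write down is genuinely out of reach of the stated tools. Using $|\nabla A|\rho\lesssim\big\||\nabla A|\delta\big\|_\infty$ reduces it to $\iint|\nabla u|^2\phi^2\,dX$, which has no $\rho$-weight and in fact diverges (interior estimates give only $|\nabla u|\lesssim 1/\delta$), and Cauchy--Schwarz against \eqref{car-A-square} does not close either because the powers do not match. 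If that term actually arose, no $\eps$-absorption would save you, and a vague appeal to a corona scheme is not a proof.

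The actual argument is organized so that this term never appears. After pulling back to $\reu$ (so that the regularized distance is literally $t$), one first divides $A$ by $a_{n+1,n+1}$ to normalize $a_{n+1,n+1}\equiv 1$; this introduces a drift term $\mathbf B\cdot\nabla u$ with $|\mathbf B|\lesssim 1/t$ and $|\mathbf B|^2\,t\,dx\,dt$ Carleson, but costs nothing more. One then starts from $\langle A\nabla u,\nabla u\rangle\,\Phi\,t$ (equivalently, your $\tfrac12\div(A\nabla(u^2))\,\Phi\,t$), and carries out a specific sequence of integrations by parts: peel off the $u$ and the $\Phi\,t$, use the PDE on the top-order piece, and for the remaining term with no $t$-weight use the identity $\iint f\,dx\,dt=-\iint(\partial_t f)\,t\,dx\,dt$, then one further horizontal integration by parts. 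Because $a_{n+1,n+1}\equiv1$ and because $u$ always appears through $\nabla(u^2)=2u\nabla u$, every term in which a derivative lands on $A$ (or which involves $\mathbf B$) comes out in the form $(|\nabla A|+|\mathbf B|)\,|u|\,|\nabla u|\,t\,\Phi$---one derivative on $u$, not two, and always with the $t$-weight. These are closed in one stroke by Cauchy--Schwarz against the Carleson measures $|\nabla A|^2t\,dx\,dt$ and $|\mathbf B|^2t\,dx\,dt$, with $\eps\iint|\nabla u|^2\,\Phi\,t\,dx\,dt$ absorbed. That is the whole content of ``the key part of \cite{KP}''; there is no stopping-time decomposition, no perturbation off constant coefficients, and no place where smallness of $\big\||\nabla A|\delta\big\|_\infty$ is needed. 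Your proposal as written does not supply the algebraic structure that makes the estimate close, and substitutes for it a sketch of a different technique that you have not carried out.
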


\begin{proof}[Sketch of Proof]
Since the estimate to be proved, namely \eqref{eq:higher-inte}, is local, we may reduce matters to working in
a single co-ordinate patch, and thus we may suppose that $\Omega=\{(x,t)\in \mathbb{R}^{n+1}:  \, t>\varphi(x)\}$,
where $\varphi$ is a Lipschitz function.  We may then further suppose that $\Omega=\reu$, the upper half-space,
by pulling back under an appropriate mapping (see, e.g., \cite{DKPV}) which preserves the class of
coefficients satisfying the modified
Hypothesis \ref{hyp1}
(i.e., with property $(c)$ replaced by
\eqref{car-A-square}).  By \cite{KKiPT}, we may further reduce matters to proving the Carleson measure estimate
\begin{equation}\label{A1}
\sup_Q  \frac1{|Q|} \int_0^{\ell(Q)}\!\!\!\int_Q |\nabla u(x,t)|^2 \, t\, dx dt \,\leq\, C \|u\|_\infty^2 \,,
\end{equation}
for any bounded weak non-negative solution of the equation $Lu=0$ in $\reu$, and the
supremum runs over all cubes $Q\subset \mathbb{R}^n$.  At this point we follow the argument of
\cite{KP} essentially verbatim.

Fix $u\ge 0$ a bounded weak solution of the equation $Lu=0$ in $\reu$.  Note that by the preceding reductions, $A=(a_{i,j})_{1\leq i,j\leq n+1}$ satisfies the modified Hypothesis
\ref{hyp1} in $\reu$, with $X:=(x,t) \in \mathbb{R}^n\times (0,\infty)$, and $\delta(x,t) = t$.
In particular, by property $(b)$, which now becomes $|\nabla A(x,t)|\lesssim 1/t$,
we have that $|\nabla u(x,t)|\lesssim  t^{-1}\,\|u\|_\infty$, uniformly in $x$ (see \eqref{eqn:Caccioppoli:general}).

Observe that if we set $A':= \big(a_{n+1,n+1}\big)^{-1} A$ (note that $a_{n+1,n+1}\ge \Lambda^{-1}>0$ by ellipticity),  then
$$L'u =-\div A'\nabla u = -\frac1{a_{n+1,n+1}} Lu  -  \nabla \left(\frac1{a_{n+1,n+1}}\right)\cdot A\nabla u
= -  \nabla \left(\frac1{a_{n+1,n+1}}\right)\cdot A\nabla u\,,$$
since $Lu=0$;  i.e.,
$L'u +{\bf B}\cdot\nabla u = 0,$
where
${\bf B} = (B_1,B_2,...,B_{n+1})$, with
$$B_k =\sum_{j=1}^{n+1}\frac{\partial}{\partial_{X_j}}  \left(\frac1{a_{n+1,n+1}}\right) a_{j,k}\,,
\qquad 1\leq k\leq n+1\,,$$
and $X_{n+1}=t$.  Then by our current assumptions on $A$,  $|{\bf B}|\lesssim 1/t$, and
$|{\bf B}|^2 t dx dt $ is a Carleson measure in $\reu$.

Thus, after relabeling $A', L'$ as $A,L$, and normalizing so that $\|u\|_\infty \leq 1$,
we may suppose that
\begin{equation}\label{A2}
a_{n+1,n+1} =1\,, \qquad 
Lu + {\bf B}\cdot \nabla u = 0\,, \qquad \|u\|_\infty + t \|\nabla u(\cdot,t)\|_\infty \lesssim 1\,,
\end{equation}
where  $L=-\div A\nabla$, $|{\bf B}|\lesssim 1/t$, and
$|{\bf B}|^2 t dx dt $ is a Carleson measure.

Fix a cube $Q\subset \re^n$, we define standard and two-sided Carleson boxes respectively, by
$$
R_Q:= Q\times \big(0,\ell(Q) \big)\,,\qquad R^*_Q:= Q\times \big(-\ell(Q),\ell(Q) \big)\,.
$$
Let $\Phi=\Phi_Q\in C_0^\infty(\re^{n+1})$ be a smooth cut-off adapted to $R_Q$, so that
$\supp \Phi \subset R_{2Q}^*,$ $\Phi\equiv 1$ in $R_Q^*$, $0\leq \Phi\leq 1$, and
$\|\nabla \Phi\|_\infty \lesssim 1/\ell(Q)$.

Set
$$d\mu(x,t):= |\nabla A(x,t)|^2\, t \,dx dt \,,\qquad    d\nu(x,t):= |{\bf B}(x,t)|^2 \,t \,dx dt \,,$$
and define their respective Carleson norms by
\begin{equation}
\|\mu\|_\cc\,:= \,\sup_Q\,\frac{\mu(R_Q)}{|Q|}\,,\qquad \|\nu\|_\cc\,:= \,\sup_Q\,\frac{\nu(R_Q)}{|Q|}\,.
\label{eq:A-carleson-app}
\end{equation}

To prove the corresponding estimate in \eqref{A1} for $Q$, it is routine to see that we can work with $u_\eta, A_\eta$ and ${\bf B}_\eta$,
in place of $u$, $A$ and ${\bf B}$,  defined by $u_\eta(x,t):= u(x,t+\eta)$, etc., and then let $\eta \to 0^+$ provided all our estimates are independent of $\eta$. To simplify the presentation we abuse the notation and use $u$, $A$ and ${\bf B}$ to denote respectively $u_\eta, A_\eta$ and ${\bf B}_\eta$. Notice that \eqref{A2} remains true
with bounds uniform in $\eta$, and also that $u=u_\eta$ is
continuous in $\overline{\ree_+}$.   We use ellipticity and then
the second equation in \eqref{A2} to write
\begin{align}\label{A3}
&\iint_{R_Q}|\nabla u|^2 \, t\, dx dt\, \leq \, \iint_{\reu}|\nabla u|^2 \, \Phi \,t\, dx dt
\\[4pt] \nonumber
&\lesssim
\iint_{\reu}\langle A\nabla u,\nabla u\rangle\, \Phi \, t\, dx dt
\\[4pt]\nonumber
&=\, \iint_{\reu}\left\langle A\nabla u,\nabla \big(u \, \Phi \, t\big)\right\rangle\, dx dt\,-
\, \iint_{\reu}\left\langle A\nabla u,\nabla \big( \Phi \, t\big)\right\rangle\, u\,dx dt
\\[4pt] \nonumber
&=\, -\,\iint_{\reu}{\bf B}\cdot\nabla u \, u\,\Phi \,t \,dx dt
 \,-\, \iint_{\reu}\left\langle A\nabla u,\nabla\Phi\right\rangle\, u\,t\,dx dt
  \,-\, \iint_{\reu}\left\langle A\nabla u,e_{n+1}\right\rangle\, u\,\Phi\,dx dt
	\\[4pt] \nonumber
&  =:  -\I_1-\I_2-\I_3,
\end{align}
where $e_{n+1}$ denotes the standard unit basis vector in the positive $t$ direction.

We first treat $\I_2$.  By the last item in \eqref{A2}, and the construction of $\Phi$,
we find that
$$|\I_2| \lesssim \frac1{\ell(Q)}\iint_{R_{2Q}}
1\,dx dt \lesssim |Q|\,.$$

Next, we consider $\I_3$,
which we rewrite as
$$
\I_3
=  \sum_{j=1}^n\iint_{\reu} a_{n+1,j}\,\big(\partial_j u\big) \, u\,\Phi\,dx dt\, + \,
\iint_{\reu}  \big(\partial_t u\big) \, u\,\Phi\,dx dt \,=: \,\I\I + \I\I\I\,,
$$
since we have reduced to the case that $a_{n+1,n+1}\equiv 1$.
Then, since $u$ is continuous in $\overline{\ree_+}$,
$$
\I\I\I = \frac12 \iint_{\reu} \partial_t \big( u^2\big)\,\Phi\,dx dt
\,=\, - \frac12 \iint_{\reu} \big(\partial_t\Phi\big)\,u^2\,dx dt -\frac12\int_{\rn} u^2 \,\Phi \,dx\,,
$$
whence it follows that $|\I\I\I|\lesssim |Q|$, by the properties of $\Phi$, and the normalization $\|u\|_\infty\leq 1$.
We also have
\begin{align*}
\I\I
&= \sum_{j=1}^n\frac12 \iint_{\reu} a_{n+1,j}\,\partial_j \big(u^2\big) \, \Phi\,dx dt
\\[4pt]
&=
 -\frac12   \sum_{j=1}^n\iint_{\reu}  \partial_t
\left(a_{n+1,j}\,\partial_j\big(u^2\big) \, \Phi\right) t\,dx dt
\\[4pt]
&= -\frac12 \sum_{j=1}^n  \iint_{\reu} \partial_t\big(a_{n+1,j}\big)\,\partial_j \big(u^2\big) \, \Phi\,t\, dx dt\,
 -\frac12 \sum_{j=1}^n \iint_{\reu} a_{n+1,j}\,\partial_t\left(\partial_j \big(u^2\big)\right) \, \Phi\,t\, dx dt
\\[4pt]
&\qquad\qquad  -\frac12 \sum_{j=1}^n \iint_{\reu} a_{n+1,j}\,\partial_j \big(u^2\big) \, \big(\partial_t\Phi\big)\,t\, dx dt
\\[4pt]
&=:\,  -\frac12 \sum_{j=1}^n
\left(\I\I_{j,1} + \I\I_{j,2} + \I\I_{j,3}\right)\,,
\end{align*}
where we have integrated by parts in $t$ in the second line.
Exactly as for term $\I_2$, we find that $|\I\I_{j,3}|\lesssim |Q|$, for each $j$.
Integrating by parts horizontally, we find that
$$
\I\I_{j,2} =  - \iint_{\reu} \partial_j\big(a_{n+1,j}\big)\,\partial_t \big(u^2\big) \, \Phi\,t\, dx dt
-  \iint_{\reu} a_{n+1,j}\,\partial_t \big(u^2\big)\, \big(\partial_j\Phi\big)\,t\, dx dt
=:\I\I_{j,2}'+\I\I_{j,2}''\,.
$$
Note that $|\I\I_{j,2}''|\lesssim |Q|$, for each $j$, exactly as for term $\I_2$.

It remains to treat the terms $\I_1$, $\I\I_{j,1}$, and $\I\I_{j,2}'$, for which we have the cumulative estimate
\begin{multline*}
|\I_1| +|\I\I_{j,1}| +|\I\I_{j,2}'| \lesssim \iint_{\reu}\big(|{\bf B}| +|\nabla A|\big)\,|\nabla u| \, |u|\,\Phi \,t \,dx dt
\\[4pt]
\lesssim\, \frac1{\eps}\big(\mu(R_{2Q})+\nu(R_{2Q})\Big)
+ \, \eps \iint_{\reu}|\nabla u|^2 \, \Phi \,t\, dx dt\,,
\end{multline*}
where $\eps$ is at our disposal, and where we have used the definition of $\Phi$ and the
normalization $\|u\|_\infty\leq 1$.   Choosing $\eps$ small enough,
we may then hide
the small term on the left hand side (more precisely in the second term) in \eqref{A3};
note that this is finite since we are working with $u_\eta, A_\eta$ and ${\bf B}_\eta$. Also, by taking $0<\eta\le \ell(Q)$, clearly
$\mu(R_{2Q})+\nu(R_{2Q})\lesssim (\|\mu\|_\cc+\|\nu\|_\cc)\,|Q|$ uniformly on $\eta$. Collecting our various estimates, letting $\eta\to 0^+$ and since $Q$ was arbitrary,
we find that \eqref{A1} holds with $C \approx \|\mu\|_\cc + \|\nu\|_\cc$.
\end{proof}


\begin{thebibliography}{AHMTTE}
\parskip=0.1cm

\bibitem[ABHM]{ABHM} M. Akman, M. Badger, S. Hofmann and J. M. Martell,
Rectifiability and elliptic measures on 1-sided NTA domains with Ahlfors-David regular boundaries,
{\it Trans. Amer. Math. Soc.} {\bf 369} (2017), no. 8, 5711--5745.



\bibitem[AHLMcT]{AHLMcT} P. Auscher, S. Hofmann, M. Lacey, A. McIntosh, and P. Tchamitchian, The solution of the Kato Square Root Problem for Second Order Elliptic operators on $\re^n$, {\it  Annals of Math.} {\bf 156} (2002), 633--654.

\bibitem [AGMT]{AGMT} J. Azzam, J. Garnett, M. Mourgoglou, and X. Tolsa, Uniform rectifiability, elliptic measure, square functions and $\epsilon$-approximability, preprint 2016, arXiv:1612.02650.


\bibitem[AHMNT]{AHMNT} J. Azzam, S. Hofmann,
J. M. Martell, K. Nystr\"om and T. Toro, A new characterization of chord-arc domains, {\it J. European Math. Soc. (JEMS)}  {\bf 19} (2017), no. 4, 967--981.



\bibitem[BiJ]{BiJ} C. Bishop and P. Jones, Harmonic Measure and arclength,
{\it Annals of Math.} {\bf 132} (1990), 511--547.

\bibitem[Bo]{B} J. Bourgain, On the Hausdorff dimension of harmonic measure in higher dimensions,
{\it Invent. Math.} {\bf 87} (1987), 477--483.



\bibitem[CFMS]{CFMS} L. Caffarelli, E. Fabes, S. Mortola and S. Salsa,  Boundary behavior of nonnegative solutions of elliptic operators in divergence form. {\it Indiana Univ. Math. J.}  {\bf 30}  (1981), no. 4, 621--640.

\bibitem[Chr]{Ch} M. Christ,  A $T(b)$ theorem with remarks on analytic
capacity and the Cauchy integral, {\it Colloq. Math.}, \textbf{LX/LXI} (1990), 601--628.

\bibitem[Dah]{Dah} B. Dahlberg,
On estimates for harmonic measure,
{\it Arch. Rat. Mech. Analysis} {\bf 65} (1977), 272--288.

\bibitem[DKPV]{DKPV}  B. Dahlberg, C. Kenig, J. Pipher, and G. Verchota,
Area integral estimates for higher order elliptic equations and systems,
{\it Annales de l'institut Fourier} {\bf 47} (1997),
1425--1461.

\bibitem[DJ]{DJe} G. David and D. Jerison, Lipschitz approximation
to hypersurfaces, harmonic measure,
and singular integrals, {\it Indiana Univ. Math. J.} {\bf 39} (1990),
no. 3, 831--845.

\bibitem[DS1]{DS1} G. David and S. Semmes, Singular integrals and
rectifiable sets in $\mathbb{R}^n :$ Au-dela des graphes lipschitziens,
{\it Asterisque} \textbf{193} (1991).

\bibitem[DS2]{DS2} G. David and S. Semmes, {\it Analysis of and on
Uniformly Rectifiable Sets}, Math. Surveys Monogr. \textbf{38}, AMS
1993.


\bibitem [GMT]{GMT} J. Garnett, M. Mourgoglou, and X. Tolsa, Uniform rectifiability in terms of Carleson
measure estimates  and $\epsilon$-approximability of bounded harmonic functions, preprint 2016, {\em arXiv:1611.00264}.

\bibitem[GT]{GT} D. Gilbarg and N. Trudinger, {\it Elliptic Partial Differential Equations of Second Order},
2nd edition, Springer-Verlag, Berlin 1983.

\bibitem[GW]{GW} M. Gruter and K-O. Widman,  {\it The Green function for uniformly elliptic equations},
Manuscripta Math. {\bf 37} (1982), 303--342.


\bibitem[HKM]{HKM} J. Heinonen, T. Kilpel\"ainen and  O. Martio
{\it Nonlinear potential theory of degenerate elliptic equations},
 Oxford University Press, New York 1993.

\bibitem[HLMc]{HLMc} S. Hofmann, M. Lacey and A. McIntosh, The solution of the Kato problem for divergence form elliptic operators with Gaussian heat kernel bounds, {\it  Annals of Math.} {\bf 156} (2002), 623--631.


\bibitem[HLMN]{HLMN} S. Hofmann, P. Le, J.M. Martell and K, Nystr\"om, {\it The weak-$A_\infty$ property of harmonic and $p$-harmonic measures}, {\em Anal. PDE.}  
{\bf 10} (2017), no. 3, 513--558.



\bibitem[HM1]{HM-URHM} S. Hofmann and J.M. Martell, {\it Uniform rectifiability and harmonic measure I: Uniform rectifiability implies Poisson kernels in $L^p$},
Ann. Sci. \'Ecole Norm. Sup. {\bf 47} (2014), no. 3, 577--654.

\bibitem[HM2]{HM4} S. Hofmann and J.M. Martell, Uniform Rectifiability and harmonic measure IV: Ahlfors regularity plus Poisson kernels in $L^p$ implies uniform rectifiability, preprint,  {\em  arXiv:1505.06499}.

\bibitem[HMM]{HMM} S. Hofmann, J.M. Martell, and S. Mayboroda,
Uniform rectifiability, Carleson measure estimates,
and approximation of harmonic functions, {\it Duke Math. J.} 165 (2016), no. 12, 2331--2389.

\bibitem[HMT]{HMT} S. Hofmann, J.M. Martell and T. Toro,
General divergence form elliptic operators on domains with ADR boundaries,
and on 1-sided NTA domains, in progress.

\bibitem[HMU]{HMU} S. Hofmann, J.M. Martell and I. Uriarte-Tuero,
Uniform Rectifiability and Harmonic Measure II:
Poisson kernels in $L^p$ imply uniform rectifiability,
 {\it Duke Math. J.} {\bf 163} (2014), no. 8, 1601--1654.


\bibitem[HMc]{HMc} S. Hofmann and A. McIntosh,
The solution of the Kato problem in two dimensions, Proceedings of the Conference on Harmonic Analysis and PDE held in El Escorial, Spain in July 2000, {\it Publ. Mat.} Vol. extra, 2002, 143--160.

\bibitem[HMMM]{HMMM}
S. Hofmann, D. Mitrea, M. Mitrea, and A.J. Morris.
\newblock  $L^p$-square function estimates on spaces of homogeneous type and on
  uniformly rectifiable sets. {\em  Mem. Amer. Math. Soc.} \textbf{245} (2017), no. 1159.


\bibitem[JK]{JK} D. Jerison and C. Kenig,  Boundary behavior of
harmonic functions in nontangentially accessible domains, {\it Adv. in Math.}
\textbf{46} (1982), no. 1, 80--147.

\bibitem[Ke]{Ke} C.E. Kenig, {\em Harmonic analysis techniques for second order elliptic boundary value problems}, CBMS Regional Conference Series in Mathematics, \textbf{83}. Published for the Conference Board of the Mathematical Sciences, Washington, DC; by the American Mathematical Society, Providence, RI, 1994.

\bibitem[KKiPT]{KKiPT} C. Kenig, B. Kirchheim, J. Pipher and T. Toro,
Square functions and the $A_\infty$ property of elliptic measures,  {\it J. Geom. Anal.} \textbf{26} (2016), no. 3, 2383--2410.


\bibitem[KP]{KP}
C.E. Kenig and J. Pipher,
The Dirichlet problem for elliptic equations with drift terms,
{\em Publ. Mat.}  {\bf 45}, (2001), 199--217.



\bibitem [MT]{MT} M. Mourgoglou and X. Tolsa,
Harmonic measure and Riesz transform in uniform and general domains, preprint, {\it arXiv:1509.08386}.

\bibitem[Se]{Se} S. Semmes, A criterion for the boundedness of singular integrals 
on hypersurfaces, {\it Trans. Amer. Math. Soc.} {\bf 311} (1989), 501--513.

\bibitem[St]{St} E. M. Stein,
{\em Singular Integrals and Differentiability Properties of Functions}.
Princeton Mathematical Series, No. 30. Princeton University Press, Princeton, N.J., 1970.

\bibitem[Z]{Z} Z. Zhao, BMO solvability and the $A_{\infty}$ condition of the elliptic measure in uniform domains, to appear, {\it J. Geom. Anal.}


\end{thebibliography}
\end{document}